%% LyX 2.3.7 created this file.  For more info, see http://www.lyx.org/.
%% Do not edit unless you really know what you are doing.
\documentclass[12pt,british,refpage,intoc,bibliography=totoc,index=totoc,BCOR=7.5mm,captions=tableheading]{extarticle}
\usepackage[T1]{fontenc}
\usepackage[latin9]{inputenc}
\usepackage[a4paper]{geometry}
\geometry{verbose,tmargin=0.9in,bmargin=1.1in,lmargin=1in,rmargin=1in}
\setcounter{secnumdepth}{3}
\setcounter{tocdepth}{3}
\usepackage{color}
\usepackage{babel}
\usepackage{amsmath}
\usepackage{amsthm}
\usepackage{amssymb}
\usepackage{graphicx}
\usepackage{subfig}
\usepackage[numbers]{natbib}
\usepackage[unicode=true,pdfusetitle,
 bookmarks=true,bookmarksnumbered=true,bookmarksopen=false,
 breaklinks=false,pdfborder={0 0 1},backref=false,colorlinks=true]
 {hyperref}
\hypersetup{
 pdfborderstyle=,linkcolor=black,citecolor=blue,urlcolor=black,filecolor=blue,pdfpagelayout=OneColumn,pdfnewwindow=true,pdfstartview=XYZ,plainpages=false}

\makeatletter

%%%%%%%%%%%%%%%%%%%%%%%%%%%%%% LyX specific LaTeX commands.
%% Because html converters don't know tabularnewline

%%%%%%%%%%%%%%%%%%%%%%%%%%%%%% Textclass specific LaTeX commands.
\numberwithin{equation}{section}
\numberwithin{figure}{section}
\theoremstyle{plain}
\newtheorem{thm}{\protect\theoremname}[section]
\theoremstyle{remark}
\newtheorem{rem}[thm]{\protect\remarkname}
\theoremstyle{remark}
\newtheorem*{rem*}{\protect\remarkname}
\theoremstyle{plain}
\newtheorem{cor}[thm]{\protect\corollaryname}
\theoremstyle{plain}
\newtheorem{lem}[thm]{\protect\lemmaname}

\@ifundefined{date}{}{\date{}}
%%%%%%%%%%%%%%%%%%%%%%%%%%%%%% User specified LaTeX commands.
\usepackage{babel}
\usepackage{caption}
\usepackage[nottoc]{tocbibind}
\allowdisplaybreaks[4]
\usepackage{dsfont}
\DeclareMathAlphabet{\mathcal}{OMS}{cmsy}{m}{n}

%  Definition of triple norm
\newcommand{\vertiii}[1]{{\left\vert\kern-0.25ex\left\vert\kern-0.25ex\left\vert #1 \right\vert\kern-0.25ex\right\vert\kern-0.25ex\right\vert}}

\providecommand{\lemmaname}{Lemma}

\providecommand{\remarkname}{Remark}
\providecommand{\theoremname}{Theorem}

\makeatother

\providecommand{\corollaryname}{Corollary}
\providecommand{\lemmaname}{Lemma}
\providecommand{\remarkname}{Remark}
\providecommand{\theoremname}{Theorem}

\begin{document}
\title{Random vortex dynamics and Monte-Carlo\\ simulations for wall-bounded viscous flows}
\author{By V. Cherepanov\thanks{Mathematical Institute, University of Oxford, Oxford OX2 6GG. Email:
vladislav.cherepanov@maths.ox.ac.uk},$\ \; $S. W. Ertel\thanks{Institut f{\"u}r Mathematik, Technische Universit{\"a}t Berlin, Stra{\ss}e
des 17. Juni 136, 10623 Berlin, DE. Email: \protect\protect\protect\protect\protect\href{mailto:ertel@math.tu-berlin.de}{ertel@math.tu-berlin.de}},\    Z. Qian\thanks{Mathematical Institute, University of Oxford, Oxford OX2 6GG, UK.,
and OSCAR, Suzhou, China. Email: \protect\protect\protect\protect\protect\protect\href{mailto:qianz@maths.ox.ac.uk}{qianz@maths.ox.ac.uk}}\ \  and \ J. Wu\thanks{Department of Mathematical Science , Beijing Normal University - Hong Kong Baptist University United International College, Zhuhai, Guangdong 519087, China. 
Email: jianglunwu@uic.edu.cn } }

\maketitle

\begin{abstract}

Functional integral representations for solutions of the motion
equations for  wall-bounded incompressible viscous
flows, expressed (implicitly) in terms of distributions of solutions to
 stochastic differential equations of McKean-Vlasov type,  are established by using a perturbation
technique. These representations are used to obtain 
exact random vortex dynamics for wall-bounded viscous flows.
Numerical schemes therefore are proposed and the convergence 
of the numerical schemes for random vortex dynamics 
with an additional force term is established. Several numerical experiments
are carried out for demonstrating the motion of a viscous flow within
a thin layer next to the fluid boundary.

\medskip{}

\emph{Key words}: diffusion processes, incompressible fluid flow,
numerical simulation, random vortex method

\medskip{}

\emph{MSC classifications}: 76M35, 76M23, 60H30, 65C05, 68Q10.
\end{abstract}

\section{Introduction}

In this paper, we shall establish Feynman-Kac type formulas for solutions to the motion equations of an incompressible viscous fluid flow past a solid wall. These representations are useful tools for implementing (Monte-Carlo) direct numerical simulations (DNS) for wall-bounded incompressible viscous flows. In order to explain our method, the paper is devoted to two-dimensional (2D) flows past flat plates only, the technology in the paper however can be applied to three-dimensional (3D) flows with appropriate (while substantial) modifications. However the proof of the convergence of 3D random vortex dynamics for flows past solid walls requires substantial new techniques which is beyond the scope of the present paper. 

The numerical method for calculating wall-bounded flows to be established in the paper is different from the traditional DNS (cf. \citep{OrszagPatterson1972}, \citep{Spalart1988}, \citep{MoinMashesh1998} for example), and is based on the exact stochastic formulation of the Navier-Stokes equations satisfying the non-slip condition. The approach, which may be called a random vortex method, is in spirit very similar to the mean field approach, while here we deal with a much more complicated boundary value problem of the fluid dynamical equations. The present work therefore aims to provide new tools for the study of incompressible viscous flows within thin layers next to the boundaries. Numerical experiments are carried out to demonstrate this point.

The random vortex method, initiated in Chorin \citep{Chorin 1973}, has been carried out for 2D incompressible viscous flows without boundary. The exact random vortex model for 2D viscous flows has been put forward by Goodman \citep{Goodman1987} and Long \citep{Long1988} (cf. also
\citep{CottetKoumoutsakos2000}, \citep{Majda and Bertozzi 2002} and the literature cited therein).

One of the goals of the present work is to prove the convergence of the numerical schemes for solving 2D Navier-Stokes equations with an external force applied to the fluid based on their corresponding random vortex dynamics. The convergence of the 2D random vortex dynamics was proved in Goodman \citep{Goodman1987} and Long \citep{Long1988}, and their proof was refined in Majda and Bertozzi \citep{Majda and Bertozzi 2002} and by other researchers. To our best knowledge, the convergence of random vortex dynamics treated in the literature does not contain any external vorticity in the vorticity transport equation. As we shall demonstrate in the present work, in order to deal with the boundary vorticity, we shall deform the vorticity transport equation by adding a suitable "external vorticity" term. Therefore it is important to establish the convergence result for numerical schemes built upon the vorticity transport equation with an applying vorticity force. Although our proof of the convergence follows the same line as those in the literature, the techniques used in the key steps of the proof are quite different. As a by-product, we are able to provide a transparent proof of the convergence result for 2D random vortex dynamics and to clarify several technical issues in the existing proofs of the convergence of 2D random vortex dynamics.

The second goal of the present work is to establish a useful numerical method based on the stochastic formulation of incompressible viscous flows past a solid wall, which may be called Monte-Carlo DNS. The random vortex formulation is established by using
a new functional integral representation of the solutions to certain parabolic equations. We demonstrate our method by several numerical 
experiments based on Monte-Carlo DNS we put forward.

The paper is organised as follows. In Section \ref{Random vortex method -- flows without constraint}, we state the stochastic integral representation for unconstrained flows with external force and propose random vortex schemes based on the representation. We analyse the corresponding mean-field equation in Section \ref{section mean field analysis}. In particular, we show the existence and uniqueness and prove that the velocity is indeed approximated by corresponding dynamics with regularised kernels and localised forcing which is used in simulations. Section \ref{section convergence} is devoted to the proof of the convergence of the discretised random vortex schemes proposed before. In Section \ref{Incompressible viscous flows past a wall}, we formulate the representation for viscous flows past a wall with additional "external" vorticity rendering the formulation in the form of the external force problem. In the last Section \ref{Numerical simulations of wall-bounded flows}, we provide the numerical implementation for the scheme for a wall-bounded flow and report the results of the conducted numerical experiments. 

\vskip0.3truecm

\emph{Conventions and notations}. Let us list several conventions
and notations which will be used throughout the paper.

\vskip0.3truecm

1) Let $D=\left\{ (x_{1},\ldots,x_{d}):x_{d}>0\right\} $ with its
boundary $\partial D=\left\{ x_{d}=0\right\} $. The dimension $d\geq2$
unless otherwise specified. The reflection of $\mathbb{R}^{d}$ about
the hyperspace $x_{d}=0$ maps $x=(x_{1},\ldots,x_{d})$ to $\bar{x}=(x_{1},\ldots,-x_{d})$.

\vskip0.3truecm

2) By a time dependent vector field $V$ (on $\mathbb{R}^{d}$ or
on $D$ depending on the context), whose components (with respect
to the standard coordinate system) are denoted by $V^{i}(x,t)$ (where
$i=1,\ldots,d$), we mean that for every $t$, $x\mapsto V(x,t)$
is a vector field. For this case we also say $V(x,t)$ is a time dependent
vector field in order to emphasize the space variable $x$ and time
parameter $t$.

\vskip0.3truecm

3) For a time dependent tensor field $V(x,t)$, the gradient $\nabla V$,
the divergence $\nabla\cdot V$, the Laplacian $\Delta V$ and etc.
apply only to the space variable $x$, and $t$ is considered as a
parameter. For example, if $V$ is a (time-dependent) vector field
with components $V^{i}$ (for $i=1,\ldots,d$), then $\nabla\cdot V=\sum_{i=1}^{d}\frac{\partial}{\partial x_{i}}V^{i}$.

\vskip0.3truecm

4) Unless otherwise specified, if $F(x)$ (for $x\in D$) is a tensor
field defined on $D$, then $F$ is extended to all $x\in\mathbb{R}^{d}$
such that $F(x)=0$ for $x\notin D$. The same convention will be
applied to time dependent tensor field $F(x,t)$ on $D$ so that $F(x,t)=0$
for $x\notin D$. This convention is adopted in the paper to simplify
formulas in the main context. There is an important exception to this
rule --- the velocity vector field $u(x,t)$ on $D$ is extended
to a vector field on $\mathbb{R}^{d}$ in a different manner, which
will be defined in Section \ref{Incompressible viscous flows past a wall}.

\vskip0.3truecm

5) Unless otherwise specified, repeated indices appearing in a mono-term
are summed over their region. For example, if $V$ is a vector field,
then $\nabla\cdot V=\frac{\partial}{\partial x_{i}}V^{i}$.

\vskip0.3truecm

6) The following convention for two-dimensional vectors will be applied
in this paper. To this end we make use of the following convention
on two-dimensional vectors. If $a=(a_{1},a_{2})$ and $b=(b_{1},b_{2})$,
then $a^{\bot}=(-a_{2},a_{1})$, the cross products 
\[
a\wedge b=a_{1}b_{2}-a_{2}b_{1}=a^{\bot}\cdot b
\]
and 
\[
a\wedge c=(a_{2}c,-a_{1}c)=-ca^{\bot}
\]
for any scalar $c$.

\section{Random vortex method -- flows without constraint}\label{Random vortex method -- flows without constraint}

In this section we derive a random vortex formulation for two-dimensional
incompressible viscous flows without space constraint. The mathematical
model for such a flow may be described by its velocity $u(x,t)=(u^{1}(x,t),u^{2}(x,t))$
and by the pressure $P(x,t)$ where $x\in\mathbb{R}^{2}$ and $t\geq0$.
Let $\nu>0$ be the viscosity of the fluid flow. Then $u$ and $P$
solve the Navier-Stokes equations 
\begin{equation}
\frac{\partial}{\partial t}u+(u\cdot\nabla)u-\nu\Delta u+\nabla P-F=0\label{NS2D1}
\end{equation}
and 
\begin{equation}
\nabla\cdot u=0\label{NS2D2}
\end{equation}
in $\mathbb{R}^{2}\times[0,\infty)$, where $F(x,t)=(F^{1}(x,t),F^{2}(x,t))$
is an external force applying to the fluid flow.

The vorticity $\omega=\nabla\wedge u$, which is a time dependent
scalar field for a 2D flow, evolves according to the vorticity equation
\begin{equation}
\frac{\partial}{\partial t}\omega+(u\cdot\nabla)\omega-\nu\Delta\omega-G=0\label{Vor2D1}
\end{equation}
where $G=\nabla\wedge F$.

\subsection{Stochastic integral representation}

In literature, the random vortex method is built upon two facts. Firstly,
since $u$ is divergence-free, the transition probability
density function $p_{u}(s,x;t,y)$ (for $t>s\geq0$ and $x,y\in\mathbb{R}^{2}$)
of the diffusion process with infinitesimal generator $\nu\Delta+u\cdot\nabla$
is the fundamental solution to the forward problem of the parabolic
equation 
\[
\left(\frac{\partial}{\partial t}+u\cdot\nabla-\nu\Delta\right)w=0
\]
which allows to represent the vorticity $\omega$ in terms of $p_{u}(s,x;t,y)$.
Indeed, 
\begin{equation}
\omega(x,t)=\int_{\mathbb{R}^{2}}p_{u}(0,\xi;t,x)\omega_{0}(\xi)\textrm{d}\xi+\int_{0}^{t}\int_{\mathbb{R}^{2}}p_{u}(s,\xi;t,x)G(\xi,s)\textrm{d}\xi\textrm{d}s\label{REP2D1}
\end{equation}
where $\omega_{0}=\nabla\wedge u_{0}$ is the initial vorticity and
$u_{0}$ is the initial velocity.

Secondly, the Biot-Savart law which allows, under certain assumptions
on the decay of the velocity at infinity, one recovers the velocity from the vorticity by
\begin{equation}
u(x,t)=\int_{\mathbb{R}^{2}}K(x-y)\omega(y,t)\textrm{d}y\label{BSK-T-2D1}
\end{equation}
where 
\begin{equation}
K(\xi)=\frac{\xi^{\bot}}{2\pi|\xi|^{2}},\quad\textrm{ for }\xi=(\xi_{1},\xi_{2})\in\mathbb{R}^{2}\textrm{ }\label{BSK-2D2}
\end{equation}
is called the Biot-Savart kernel in $\mathbb{R}^{2}$.

As a matter of fact, numerical schemes based on \eqref{REP2D1} are
computationally expensive, therefore we develop an alternative representation
of the vorticity $\omega$, that leads to a feasible numerical scheme.

Let $X(\xi,t)$ (also denoted later by $X_{t}^{\xi}$) for $\xi\in\mathbb{R}^{2}$
be the Taylor diffusion family defined by the SDE 
\begin{equation}
\textrm{d}X(\xi,t)=u\left(X(\xi,t),t\right)\textrm{d}t+\sqrt{2\nu}\textrm{d}B_{t},\quad X(\xi,0)=\xi\label{SDE-2D}
\end{equation}
where $B$ is a Brownian motion on some probability space. 
\begin{thm}
\label{Theorem - representation of vorticity} The vorticity $\omega$
possesses the following representation 
\begin{align}
\begin{split}
\omega(x,t) & =\int_{\mathbb{R}^{2}}p_{u}(0,\xi,t,x)\omega_{0}(\xi)\textrm{d}\xi \\
 & +\int_{0}^{t}\int_{\mathbb{R}^{2}}\mathbb{E}\left[\left.G(X(\xi,s),s)\right|X(\xi,t)=x\right]p_{u}(0,\xi,t,x)\textrm{d}\xi\textrm{d}s
 \end{split}
 \label{VREP2D1}
\end{align}
for $x\in\mathbb{R}^{2}$ and $t\geq0$. 
\end{thm}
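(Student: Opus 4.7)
The plan is to take the Eulerian representation \eqref{REP2D1} as given and convert its source term into the Lagrangian form claimed in \eqref{VREP2D1} via the Markov property of the Taylor diffusion family and Chapman--Kolmogorov. Since the first summands on both sides of \eqref{VREP2D1} and \eqref{REP2D1} already coincide, it suffices to prove the identity
\[
\int_{\mathbb{R}^{2}} p_u(s,\xi;t,x)\,G(\xi,s)\,\textrm{d}\xi \;=\; \int_{\mathbb{R}^{2}} \mathbb{E}\bigl[G(X(\xi,s),s)\,\big|\,X(\xi,t)=x\bigr]\,p_u(0,\xi;t,x)\,\textrm{d}\xi
\]
for every $s\in[0,t]$, after which integrating in $s$ over $[0,t]$ gives \eqref{VREP2D1}.

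The first step will be to write the conditional expectation explicitly. By the Markov property of $X(\xi,\cdot)$, the joint density of $(X(\xi,s),X(\xi,t))$ factors as $(y,z)\mapsto p_u(0,\xi;s,y)\,p_u(s,y;t,z)$, so
\[
\mathbb{E}\bigl[G(X(\xi,s),s)\,\big|\, X(\xi,t)=x\bigr]\,p_u(0,\xi;t,x) \;=\; \int_{\mathbb{R}^{2}} G(y,s)\,p_u(0,\xi;s,y)\,p_u(s,y;t,x)\,\textrm{d}y.
\]
Integrating in $\xi$ and applying Fubini reduces the claim to showing that $\Phi(s,y):=\int_{\mathbb{R}^{2}} p_u(0,\xi;s,y)\,\textrm{d}\xi$ is identically $1$; granted this, the right-hand side collapses to $\int G(y,s)\,p_u(s,y;t,x)\,\textrm{d}y$, which is the left-hand side after relabelling $y\leftrightarrow\xi$.

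The main obstacle is the identity $\Phi\equiv 1$, since it integrates $p_u$ over its \emph{starting} argument rather than the standard normalisation over the terminal argument. This is precisely where incompressibility $\nabla\cdot u=0$ enters decisively. I plan to note that $p_u(0,\xi;s,y)$, as a function of $(s,y)$, satisfies the forward Fokker--Planck equation
\[
\partial_{s} p_u \;=\; \nu\Delta_y p_u - \nabla_y\cdot(u\,p_u),
\]
which under $\nabla\cdot u=0$ collapses to $\partial_{s} p_u + u\cdot\nabla_y p_u - \nu\Delta_y p_u = 0$. Integration in $\xi$ commutes with this linear operator, so $\Phi$ satisfies the same advection--diffusion equation with initial datum $\Phi(0,\cdot)\equiv 1$; uniqueness then forces $\Phi\equiv 1$.

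Putting the pieces together and integrating in $s$ over $[0,t]$ will produce \eqref{VREP2D1}. A minor technicality to check along the way is the legitimacy of the Fubini interchanges, which follows from mild integrability of $G$ together with Gaussian-type decay of $p_u$; these are already implicit in the standing hypotheses required for \eqref{REP2D1} itself to hold.
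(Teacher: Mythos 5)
Your proof is correct but follows a genuinely different route from the paper. The paper does not deduce \eqref{VREP2D1} from \eqref{REP2D1}; instead it constructs the time-reversed diffusion $\tilde{X}$ with drift $-u(\cdot,T-\cdot)$, applies It\^o's formula to $\omega(\tilde{X}(\xi,t),T-t)$, takes expectations and conditions, invokes the transition-density duality $p_{-u_{T}}(0,\xi,T,\eta)=p_{u}(0,\eta,T,\xi)$ (valid because $\nabla\cdot u=0$), and finally converts the conditional expectations over $\tilde{X}$ into conditional expectations over $X$ using the ``duality of conditional laws'' result cited from \citep{ProcA-paper2}. Your argument avoids It\^o's formula, time reversal, and the external duality-of-conditional-laws machinery altogether: you take the standard Eulerian formula \eqref{REP2D1} (which the paper states without proof as background) as the starting point, expand the conditional expectation via the Markov/Chapman--Kolmogorov factorisation of the joint density of $\left(X(\xi,s),X(\xi,t)\right)$, apply Fubini, and reduce the whole thing to the normalisation identity $\int_{\mathbb{R}^{2}}p_{u}(0,\xi;s,y)\,\mathrm{d}\xi\equiv 1$. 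That identity is exactly where divergence-freeness enters in your version, and it is morally the same place it enters in the paper's version through the $p_{-u}\leftrightarrow p_{u}$ duality; your derivation via the forward Fokker--Planck equation plus uniqueness for bounded solutions of the advection--diffusion equation is sound (alternatively, one can read it directly off the duality relation, which gives a one-line proof). The trade-off is that the paper's proof is self-contained relative to \eqref{REP2D1}, while yours is shorter, more modular, and more transparent once \eqref{REP2D1} is granted; both rely on the same structural fact that divergence-free drift preserves Lebesgue measure.
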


\begin{proof}
We make use of the duality of conditional laws of diffusions, established
in \citep{ProcA-paper2}. Take arbitrary fixed $T>0$. Let $\tilde{X}(\xi,\cdot)$
be the unique (weak) solution of the following SDE: 
\begin{equation}
\textrm{d}\tilde{X}(\xi,t)=-u(\tilde{X}(\xi,t),T-t)\textrm{d}t+\sqrt{2\nu}\textrm{d}B_{t},\quad\tilde{X}(\xi,0)=\xi\label{ba-s01-1-1-1}
\end{equation}
for every $\xi\in\mathbb{R}^{2}$, where $u(x,t)=0$ for $t<0$. Let
$Y_{t}=\omega(\tilde{X}(\xi,t),T-t)$. By It\^{o}'s formula and the vorticity
transport equation to deduce that 
\begin{equation}
Y_{t}=Y_{0}+\sqrt{2\nu}\int_{0}^{t}\nabla\omega(\tilde{X}(\xi,s),T-s)\cdot\textrm{d}B_{s}-\int_{0}^{t}G(\tilde{X}(\xi,s),T-s)\textrm{d}s\label{Y-aa1-1-1}
\end{equation}
for every $t\in[0,T]$. Taking expectation on both sides we obtain
that 
\[
\omega(\xi,T)=\mathbb{E}\left[\omega_{0}(\tilde{X}(\xi,T))\right]+\int_{0}^{T}\mathbb{E}\left[G(\tilde{X}(\xi,t),T-t)\right]\textrm{d}t.
\]
The last term in the previous equation can be rewritten using the
conditional expectation, to obtain that 
\begin{align*}
\omega(\xi,T) & =\int_{\mathbb{R}^{2}}\omega_{0}(\eta)p_{-u_{T}}(0,\xi,T,\eta)\textrm{d}\eta\\
 & +\int_{0}^{T}\int_{\mathbb{R}^{2}}\mathbb{E}\left[\left.G(\tilde{X}(\xi,t),T-t)\right|\tilde{X}(\xi,T)=\eta\right]p_{-u_{T}}(0,\xi,T,\eta)\textrm{d}\eta\textrm{d}t,
\end{align*}
where $p_{-u_{T}}(\tau,\xi,t,\eta)$ is the transition probability
density function of the diffusion $\tilde{X}(\xi,\cdot)$. Since $\nabla\cdot u=0$,
\[
p_{-u_{T}}(0,\xi,T,\eta)=p_{u}(0,\eta,T,\xi),
\]
which yields that 
\begin{align}
\begin{split}
\omega(\xi,T) & =\int_{\mathbb{R}^{2}}\omega_{0}(\eta)p_{u}(0,\eta,T,\xi)\textrm{d}\eta \\
 & +\int_{0}^{T}\int_{\mathbb{R}^{2}}\mathbb{E}\left[\left.G(\tilde{X}(\xi,t),T-t)\right|\tilde{X}(\xi,T)=\eta\right]p_{u}(0,\eta,T,\xi)\textrm{d}\eta\textrm{d}t
 \end{split}
 \label{W-aa2-2-1}
\end{align}
for every $\xi\in\mathbb{R}^{2},T>0$. Thanks to the duality of conditional
laws from \citep{ProcA-paper2}, the conditional expectation
can be written in terms of the diffusion process with infinitesimal
generator $\nu\Delta+u\cdot\nabla$, which therefore leads to
\begin{align}
\begin{split}\omega(\xi,T) & =\int_{\mathbb{R}^{2}}\omega_{0}(\eta)p_{u}(0,\eta,T,\xi)\textrm{d}\eta\\
 & +\int_{0}^{T}\int_{\mathbb{R}^{2}}\mathbb{E}\left[\left.G(X(\eta,t),t)\right|X(\eta,T)=\xi\right]p_{u}(0,\eta,T,\xi)\textrm{d}\eta\textrm{d}t.
\end{split}
\label{W-aa2-1-1-1-1}
\end{align}
The proof is complete. 
\end{proof}
As a consequence of this representation of the vorticity, we have
the random vortex representation for the velocity. 
\begin{thm}
With the notations above, the velocity 
\begin{equation}
u(x,t)=\int_{\mathbb{R}^{2}}\mathbb{E}\left[K\left(x-X(\xi,t)\right)\left(\omega_{0}(\xi)+\int_{0}^{t}G(X(\xi,s),s)\textrm{d}s\right)\right]\textrm{d}\xi\label{RV-2D1}
\end{equation}
for $x\in\mathbb{R}^{2}$ and $t\geq0$, where $\omega_{0}=\nabla\wedge u_{0}$
is the initial vorticity. 
\end{thm}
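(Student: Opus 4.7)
The plan is to combine the Biot--Savart law \eqref{BSK-T-2D1} with the vorticity representation \eqref{VREP2D1} just established, and then invoke the tower property of conditional expectation to recast the double space integrals as expectations along the Taylor diffusion family.

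First I would insert \eqref{VREP2D1} into the Biot--Savart identity $u(x,t)=\int_{\mathbb{R}^{2}}K(x-y)\omega(y,t)\,\textrm{d}y$, producing two terms. For the initial data term, Fubini yields
\[
\int_{\mathbb{R}^{2}}\int_{\mathbb{R}^{2}}K(x-y)\,p_{u}(0,\xi,t,y)\,\omega_{0}(\xi)\,\textrm{d}\xi\,\textrm{d}y
=\int_{\mathbb{R}^{2}}\omega_{0}(\xi)\,\mathbb{E}\left[K(x-X(\xi,t))\right]\textrm{d}\xi,
\]
since $y\mapsto p_{u}(0,\xi,t,y)$ is precisely the law of $X(\xi,t)$.

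For the forcing term, after a Fubini swap of the $y$ and $(\xi,s)$ integrals, the inner $y$-integral reads
\[
\int_{\mathbb{R}^{2}}K(x-y)\,\mathbb{E}\!\left[G(X(\xi,s),s)\,\middle|\,X(\xi,t)=y\right]p_{u}(0,\xi,t,y)\,\textrm{d}y.
\]
Since $K(x-X(\xi,t))$ is $\sigma(X(\xi,t))$-measurable, the tower property together with the definition of the conditional density gives that this equals $\mathbb{E}\!\left[K(x-X(\xi,t))\,G(X(\xi,s),s)\right]$. A further application of Fubini then pulls the time integral inside the expectation, and combining the two terms delivers exactly \eqref{RV-2D1}.

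The only nontrivial issue is justifying the Fubini exchanges, because the Biot--Savart kernel $K(\xi)=\xi^{\bot}/(2\pi|\xi|^{2})$ has a non-integrable singularity at the origin. I would handle this by assuming (as implicit in the statement and the standing regularity hypotheses for the representation) that $\omega_{0}$ and $G(\cdot,s)$ are bounded with sufficient spatial decay, so that $y\mapsto K(x-y)\omega(y,t)$ is integrable; the exchange under the expectation is then legitimate because the law of $X(\xi,t)$ has a bounded density $p_{u}(0,\xi,t,\cdot)$, making $\mathbb{E}[|K(x-X(\xi,t))|]$ locally integrable in $\xi$ via the same argument that makes the deterministic Biot--Savart integral converge. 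Everything else is a clean bookkeeping exercise, so I expect the exchange of integration and the handling of the mild singularity of $K$ to be the only point requiring care.
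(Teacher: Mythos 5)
Your proposal takes essentially the same route as the paper: insert the vorticity representation \eqref{VREP2D1} into the Biot--Savart law, recognize the $y$-integral against $p_{u}(0,\xi,t,\cdot)$ as an expectation over $X(\xi,t)$, and use the tower property (with $K(x-X(\xi,t))$ being $\sigma(X(\xi,t))$-measurable) to collapse the conditional expectation. The paper presents this as a three-line chain of equalities without commenting on the Fubini exchanges; your note on the integrability of the kernel is a reasonable extra caution but does not change the substance.
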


\begin{proof}
The proof follows from the Biot-Savart law and the stochastic integral representation for the vorticity: 
\begin{align*}
u(x,t) & =\int_{\mathbb{R}^{2}}\left(\int_{\mathbb{R}^{2}}K(x-y)p_{u}(0,\xi,t,y)\textrm{d}y\right)\omega_{0}(\xi)\textrm{d}\xi\\
 & +\int_{0}^{t}\int_{\mathbb{R}^{2}}\left(\int_{\mathbb{R}^{2}}K(x-y)\mathbb{E}\left[\left.G(X(\xi,s),s)\right|X(\xi,t)=y\right]p_{u}(0,\xi,t,y)\textrm{d}y\right)\textrm{d}\xi\textrm{d}s\\
 & =\int_{\mathbb{R}^{2}}\left(\int_{\mathbb{R}^{2}}K(x-y)p_{u}(0,\xi,t,y)\textrm{d}y\right)\omega_{0}(\xi)\textrm{d}\xi\\
 & +\int_{0}^{t}\int_{\mathbb{R}^{2}}\left(\int_{\mathbb{R}^{2}}\mathbb{E}\left[\left.K\left(x-X(\xi,t)\right)G(X(\xi,s),s)\right|X(\xi,t)=y\right]p_{u}(0,\xi,t,y)\textrm{d}y\right)\textrm{d}\xi\textrm{d}s\\
 & =\int_{\mathbb{R}^{2}}\mathbb{E}\left[K\left(x-X(\xi,t)\right)\right]\omega_{0}(\xi)\textrm{d}\xi\\
 & +\int_{0}^{t}\int_{\mathbb{R}^{2}}\mathbb{E}\left[K\left(x-X(\xi,t)\right)G(X(\xi,s),s)\right]\textrm{d}\xi\textrm{d}s.
\end{align*}
\end{proof}

\begin{rem}
Note that even though the SDE used in the representation \eqref{RV-2D1} involves finite-dimensional noise, in fact it lives in an infinite-dimensional function space. One should thus rather view \eqref{SDE-2D} together with \eqref{RV-2D1} as a field-valued McKean--Vlasov equation. 
\end{rem}

\subsection{Random vortex schemes}

Based on the random vortex formulation \eqref{SDE-2D}, \eqref{RV-2D1}
one can derive numerical schemes for simulating solutions of viscous
fluid flows.

A standard method to approximate (field-valued) McKean--Vlasov type
of stochastic differential equations like \eqref{SDE-2D}, \eqref{RV-2D1} is to use a system of interacting SDEs, which are field valued in our case. With the strong law of large numbers in mind, we consider the following system.

Let $N\in\mathbb{N}$ and consider the system 
\begin{align}
u^{N}(x,t) & =\frac{1}{N}\sum_{i=1}^{N}\int_{\mathbb{R}^{2}}K\left(x-X^{i;N}(\eta,t)\right)\left(\omega_{0}(\eta)+\int_{0}^{t}G\left(X^{i;N}(\eta,s),s\right)\textrm{d}s\right)\textrm{d}\eta\label{u-n}
\end{align}
and 
\begin{align}
 & \textrm{d}X^{j;N}(\eta,t)=u^{N}\left(X^{j;N}(\eta,t),t\right)\textrm{d}t+\sqrt{2\nu}\textrm{d}B_{t}^{j},\quad X^{j;N}(\eta,0)=\eta\label{interacting continuous fields}
\end{align}
where $j=1,\ldots,N$ and $\left(B^{j}\right)_{j\in\mathbb{N}}$
are independent copies of the 2D Brownian motion $B$. The above
system is obtained formally by replacing the expectation appearing
in \eqref{RV-2D1} by the sample mean of `independent' copies of the
Taylor diffusion. To turn \eqref{interacting continuous fields} into
a viable numerical approximation, one needs to consider a discretisation of the spatial integral involved. For flows without space constraint, a uniform grid of size $h>0$
may be applied.

To ensure the well-posedness of the induced particle approximation,
it is easy to have the Biot--Savart kernel $K$ regularised into
a function denoted by $K_{\delta}$ (where $\delta>0$ is a parameter),
so that $K_{\delta}$ is smooth up to the origin $x=0$.

Besides the singularity of the Biot--Savart kernel, another problem
of naive discretisation of \eqref{u-n} is that even for external
vorticity $G$ with compact support, the function $\eta\mapsto\mathbb{E}\left[G(X(\eta,t),t)\right]$
will not have compact support in general. Thus, in the inhomogeneous setting,
the well-posedness of the integrals in \eqref{u-n} becomes an issue
and in naive numerical discretisation every node would interact with
infinitely many other nodes. As in practice this can not be implemented in numerical schemes, we need a cut-off function $\chi_{R}:\mathbb{R}^{2}\to[0,1]$ that vanishes outside of the ball centered at $0$ with radius $R>0$ and thus avoids infinite numerical stencils. There are various possible regularisations $K_{\delta}$ and cut-off $\chi_{R}$ and our analysis can deal with many of the reasonable choices. The assumptions we require are made precise in the beginning of the next section.

This then gives us the following Monte Carlo approximation 
\begin{align}
\begin{split}
\hat{u}_{N,h,\delta,R}\left(x,t\right):=&\frac{1}{N}\sum_{k=1}^{N}\sum_{j\in\mathbb{Z}^{2}}h^{2}K_{\delta}\left(x-\hat{X}_{N,h,\delta,R}^{k}(j,t)\right)\Bigl(\omega_{0}(jh) \\
 & \qquad+\int_{0}^{t}G\left(\hat{X}_{N,h,\delta,R}^{k}(j,s),s\right)\textrm{d}s ~ \chi_{R}(jh)\Bigl),
\end{split}
\label{monte Carlo velocity}
\end{align}
where $\hat{X}_{N,h,\delta,R}^{k}(j,\cdot)$ (for $k=1,\ldots,N$ and $j\in\mathbb{Z}^{2}$) are determined by the SDE: 
\begin{align}
\mathrm{d}\hat{X}_{N,h,\delta,R}^{k}(j,t)=\hat{u}_{N,h,\delta,R}\left(\hat{X}_{N,h,\delta,R}^{k}(j,t),t\right)\mathrm{d}t+\sqrt{2\nu}\textrm{d}B_{t}^{k},\quad\hat{X}_{N,h,\delta,R}^{k}(j,0)=jh.\label{Monte Carlo dynamics}
\end{align}
Since $\chi_{R}$ has a compact support, the second sum in \eqref{monte Carlo velocity} is indeed a finite sum.

We will refer to the combined system \eqref{monte Carlo velocity},
\eqref{Monte Carlo dynamics} as the \textbf{Field Monte Carlo Random
Vortex (FMCRV)} approximation. Due to the localisation there are only
a finite number of $\hat{X}_{N,h,\delta,R}^{k}(j,t)$ that determine
the velocity $\hat{u}_{N,h,\delta,R}\left(\cdot,t\right)$, and due
to the regularisation the velocity is Lipschitz. Thus one easily verifies that FMCRV approximation is well-posed.

The FMCRV is based on the law of large numbers for i.i.d. random variables. One can also derive approximations based on concentration inequalities for non identically distributed random variables. 

One known way to implement a Monte-Carlo method is based on the following modified system:
\begin{equation}
\mathrm{d}X_{h,\delta,R}(i,t)=u_{h,\delta,R}\left(X_{h,\delta,R}(i,t),t\right)\mathrm{d}t+\sqrt{2\nu}\mathrm{d}B^{i}(t),~X_{h,\delta,R}(i,0)=ih,
\label{hyperrandom vortex dynamics - general SDE}
\end{equation}
and
\begin{align}
\begin{split}
 u_{h,\delta,R}(x,t)&=\sum_{j\in\mathbb{Z}^{2}}h^{2}K_{\delta}\left(x-X_{h,\delta,R}(j,t)\right)\left(\omega_{0}(jh)\right.\\ &\qquad+\int_{0}^{t}G\left(X_{h,\delta,R}(j,s),s\right)\textrm{d}s\left.\chi_{R}(jh)\right),
\end{split}
\label{hyperrandom vortex dynamics - general velocity}
\end{align}
where $B^{i}$ are independent Brownian motions, which will be referred to as the \textbf{Particle Monte Carlo Random Vortex (PMCRV)} approximation.

\section{Some facts about the mean-field equation}
\label{section mean field analysis}

In this section, we prove the well-posedness of the mean-field random
vortex dynamics in the form of McKean--Vlasov type dynamics 
\begin{align}
\begin{split}\mathrm{d}X(\eta,t) & =u\left(X(\eta,t),t\right)\mathrm{d}t+\sqrt{2\nu}\mathrm{d}B_{t},\quad X(\eta,0)=\eta,\\
u(x,t) & =\int_{\mathbb{R}^{2}}\mathbb{E}\left[K(x-X(\eta,t))\left(\omega_{0}(\eta)+\int_{0}^{t}G(X(\eta,s),s)\textrm{d}s\right)\right]\textrm{d}\eta
\end{split}
\label{mean field random vortex dynamics - summarized form}
\end{align}
where $B$ is a two-dimensional Brownian motion. Furthermore we show
that on the mean-field level the velocity can be robustly approximated
by dynamics with regularised kernels and localised forcing. To this
end, we make the following assumptions that shall hold throughout this
and the next section (i.e. Sections \ref{section mean field analysis}
and \ref{section convergence}).

\textit{Assumption 1.} \label{Assumption - initial vorticity and force }
We assume that the initial vorticity $\omega_{0}\in C^{2}\left(\mathbb{R}^{2};\mathbb{R}\right)$
with a compact support. Similarly, we assume that the external vorticity
$G$ is smooth, and $G(x,t)=0$ for $|x|>R_{0}$ and for all $t$. 

\textit{Assumption 2.} \label{assumption Kdelta - overview} We consider
a family of vector kernels $\left\{ K_{\delta}:\delta>0\right\} $
which has the following properties:
\begin{itemize}
\item \label{assumption Kdelta - Lipschitz} $K_{\delta}\in C^{2}\left(\mathbb{R}^{2};\mathbb{R}^{2}\right)$
with bounded first and second derivatives, although in general the
bounds of the derivatives might depend on $\delta>0$.
\item \label{assumption Kdelta - bound} There exists a constant $C_{0}$,
such that 
\begin{align*}
\left|K_{\delta}(z)\right|\leq\frac{C_{0}}{|z|}  \quad\textrm{ for all }\delta>0\textrm{ and }z\neq0.
\end{align*}
\item \label{assumption Kdelta - epsilon existence} $K_{\delta}$ approximates
the Biot-Savart kernel $K$ in the following sense that, there is
$\epsilon_{\delta}:\mathbb{R}^{2}\to[0,\infty)$ for each $\delta>0$
such that 
\begin{align}
\left|K_{\delta}(z)-K(z)\right|\leq\frac{\epsilon_{\delta}(z)}{|z|}\quad\text{ for all }z\neq0\label{assumption Kdelta - epsilon definition-1}
\end{align}
for any $\delta>0$, where the family of functions $\epsilon_{\delta}$
has to satisfy the following condition.
\item \label{assumption Kdelta - convergence of error} It holds that 
\begin{align*}
\int_{\left\{ |z|\leq1\right\} }\frac{\epsilon_{\delta}(z)}{|z|}\mathrm{d}z+\sup_{|z|\geq1}\frac{\epsilon_{\delta}(z)}{|z|}\leq C_{\mathrm{reg}}\delta
\end{align*}
for some positive constant $C_{\mathrm{reg}}>0$. 
\end{itemize}
\begin{rem*}
A canonical example used throughout this work is the following 
\begin{align*}
K_{\delta}(z):=\left(1-e^{-\frac{|z|^{2}}{2\delta^{2}}}\right)K(z)
\end{align*}
for $\delta>0$. Then the assumptions listed above hold with $\epsilon_{\delta}(z):=\exp\left(-\frac{|z|^{2}}{2\delta^{2}}\right)$.
Moreover $K_{\delta}$ is globally Lipschitz continuous.
\end{rem*}

Furthermore we make the following assumption regarding the cutoff $\chi_{R}$.

\textit{Assumption} \textit{3}.\label{assumption chiR} The cut-off
function $\chi_{R}\in C^{2}\left(\mathbb{R}^{2},[0,1]\right)$, for
$R>0$, such that 
\begin{align}
\begin{split}\chi_{R}(z) & =1~\text{for}~|z|<R,\\
\chi_{R}(z) & =0~\text{for}~|z|>R+1
\end{split}
\label{cutoff function definition}
\end{align}
and the first two derivatives of $\chi_{R}$ are bounded. 

For simplicity, we set $K_{0}=K$ and $\chi_{\infty}=1$. It is understood that the mean-field random vortex dynamics \eqref{mean field random vortex dynamics - summarized form} are defined for $\delta=0$ or/and $R=\infty$. 

We consider the (field-valued) McKean--Vlasov equation 
\begin{align}
\mathrm{d}X_{\delta,R}(x,t)=u_{\delta,R}\left(X_{\delta,R}(x,t),t\right)\mathrm{d}t+\sqrt{2\nu}\mathrm{d}B_{t},~X_{\delta,R}(x,0)=x\label{regularised mf equation}
\end{align}
where 
\begin{align}
\begin{split}u_{\delta,R}(x,t) & =\int_{\mathbb{R}^{2}}\mathbb{E}\left[K_{\delta}\left(x-X_{\delta,R}(\eta,t)\right)\right]\omega_{0}(\eta)\textrm{d}\eta\\
 & +\int_{0}^{t}\int_{\mathbb{R}^{2}}\mathbb{E}\left[K_{\delta}\left(x-X_{\delta,R}(\eta,t)\right)G(X_{\delta,R}(\eta,s),s)\right]\chi_{R}(\eta)\textrm{d}\eta\textrm{d}s.
\end{split}
\label{mean field form of drift - regularised}
\end{align}

Firstly we show that the system \eqref{regularised mf equation}, \eqref{mean field form of drift - regularised} has a unique solution. To this end, for any given bounded drift function $b:\mathbb{R}^{2}\times\left[0,\infty\right)\to\mathbb{R}^{2}$,
we denote by $X^{b}$ the (weak) solution to 
\begin{align}
\mathrm{d}X^{b}(x,t)=b\left(X^{b}(x,t),t\right)\mathrm{d}t+\sqrt{2\nu}\mathrm{d}B_{t},\quad X^{b}(x,0)=x\label{SDE for arbitrary drift}
\end{align}
and $p_{b}\left(\tau,x,t,y\right)$ (for $t>\tau\geq0$) denotes its
transition probability density function, which is continuous. 

With this in mind, we define for any drift $b$ the operation $\mathcal{K}\left(K_{\delta},\chi_{R},\omega_{0},G\right)\diamond b$ by 
\begin{align*}
\left(\mathcal{K}\left(K_{\delta},\chi_{R},\omega_{0},G\right)\diamond b\right)(x,t) & :=\int_{\mathbb{R}^{2}}\mathbb{E}\left[K_{\delta}\left(x-X^{b}(\eta,t)\right)\right]\omega_{0}(\eta)\textrm{d}\eta\\
 & +\int_{0}^{t}\int_{\mathbb{R}^{2}}\mathbb{E}\left[K_{\delta}\left(x-X^{b}(\eta,t)\right)G\left(X^{b}(\eta,s),s\right)\right]\chi_{R}(\eta)\textrm{d}\eta\textrm{d}s
\end{align*}
for all $x\in\mathbb{R}^{2},~t\geq0$. Therefore, the fixed points of the mapping
\begin{equation*}
b\mapsto\mathcal{K}\left(K_{\delta},\chi_{R},\omega_{0},G\right)\diamond b
\end{equation*}
are solutions of \eqref{regularised mf equation}.

Before investigating the limit $\delta\to0$ and $R\to\infty$, we aim to prove the existence and uniqueness of such a fixed point, similar to Qian and Yao \citep{QianYao2022}, using a Banach fixed point argument. To this end we first generalise some bounds that were proven in \citep[Lemma 5 and 6]{QianYao2022}. A key tool here is the upper Aronson estimates. In their generic form they state there exists a constant $M(C_{K},T,\nu)>0$, such that on the time interval $[0,T]$ for all drift functions $b$ with $\sup_{t\leq T,x\in\mathbb{R}^{2}}\left|b(x,t)\right|\leq C_{K}$, it holds that 
\begin{align}
p_{b}\left(\tau,z,t,y\right)\leq\frac{M(C_{K},T,\nu)}{2\pi(t-\tau)}e^{-\frac{|y-z|^{2}}{M(C_{K},T,\nu)(t-\tau)}},~\text{for all}~y,z\in\mathbb{R}^{2},~0\leq\tau\leq t\leq T.\label{rough Aronson estimate}
\end{align}

This bound can be sharpened into the following more precise form 
\begin{align}
p_{b}(\tau,z,t,y)\leq\frac{e^{-\frac{|z-y|^{2}}{2(t-\tau)}}}{2\pi(t-\tau)}\left(1+\kappa(q)\frac{C_{K}}{\sqrt{2\nu}}\left(\sqrt{t-\tau}+\left|z-y\right|\right)e^{\frac{q-1}{2q(t-\tau)}|z-y|^{2}+\frac{C_{K}^{2}(t-\tau)}{4(q-1)\nu}}\right),\label{improved Aronson estimates}
\end{align}
for any $q\in(1,2)$. Hereby $\kappa(q)>0$ is a universal constant only depending on $q$. In what follows, we call \eqref{improved Aronson estimates} the improved Aronson estimate. Based on these estimates, we introduce  
\begin{align}
\bar{\kappa}(q,r) & :=1+\kappa(q)re^{\frac{r^{2}}{2(q-1)}}~q\left(1+\sqrt{\frac{q\pi}{2}}\right),\quad\text{for any}~q\in(1,2),r\geq0.\label{definition kappa_bar}
\end{align}

The following corollary will be useful later on.
\begin{cor}
\label{improved Aronson - integral} Let $b:\mathbb{R}^{2}\times[0,T]\to\mathbb{R}^{2}$
be a bounded function with $\sup_{t\leq T,x\in\mathbb{R}^{2}}\left|b(x,t)\right|\leq C_{K}$.
Then for any $0\leq\tau\leq t\leq T$ and any $y\in\mathbb{R}^{2}$, it holds that 
\begin{align*}
\int_{\mathbb{R}^{2}}p_{b}(\tau,z,t,y)\mathrm{d}z & \leq\int_{\mathbb{R}^{2}}\frac{e^{-\frac{|z-y|^{2}}{2(t-\tau)}}}{2\pi(t-\tau)}\left(1+\kappa(q)\frac{C_{K}}{\sqrt{2\nu}}\left(\sqrt{t-\tau}+\left|z-y\right|\right)\right. \\ & \qquad \left.\times e^{\frac{q-1}{2q(t-\tau)}|z-y|^{2}+\frac{C_{K}^{2}(t-\tau)}{4(q-1)\nu}}\right)\mathrm{d}z=\bar{\kappa}\left(q,\frac{C_{K}\sqrt{t-\tau}}{\sqrt{2\nu}}\right).
\end{align*}
\end{cor}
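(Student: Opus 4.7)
The plan is simply to integrate the improved Aronson upper bound \eqref{improved Aronson estimates} over $z\in\mathbb{R}^{2}$ term by term. The first summand is a Gaussian density (in $z$) centred at $y$ with variance $t-\tau$, and therefore contributes exactly $1$. All the work lies in the second summand, where after the change of variable $w := z-y$ and writing $s := t-\tau$, the key observation is that the two quadratic exponents combine via the identity $-\tfrac{1}{2s}+\tfrac{q-1}{2qs}=-\tfrac{1}{2qs}$. This turns the $w$-integral into a Gaussian of variance $qs$ weighted by the polynomial factor $\sqrt{s}+|w|$, multiplied by the $w$-independent factor $\exp\bigl(C_{K}^{2}s/(4(q-1)\nu)\bigr)$.

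The remaining calculation is then a pair of standard radial Gaussian moments: passing to polar coordinates gives $\int_{0}^{\infty}r\,e^{-r^{2}/(2qs)}\mathrm{d}r = qs$ and $\int_{0}^{\infty}r^{2}e^{-r^{2}/(2qs)}\mathrm{d}r = \sqrt{\pi/2}\,(qs)^{3/2}$. After the $2\pi$ from angular integration cancels the $2\pi$ in the Gaussian normalisation, the second summand evaluates to $\kappa(q)\,q\bigl(1+\sqrt{q\pi/2}\bigr)\,\tfrac{C_{K}\sqrt{s}}{\sqrt{2\nu}}\exp\bigl(C_{K}^{2}s/(4(q-1)\nu)\bigr)$. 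Setting $r := C_{K}\sqrt{t-\tau}/\sqrt{2\nu}$ rewrites the leftover exponential as $\exp(r^{2}/(2(q-1)))$, and adding back the $1$ from the first summand yields precisely $\bar{\kappa}(q,r)$ as defined in \eqref{definition kappa_bar}.

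There is no serious analytical obstacle here; the whole proof is elementary bookkeeping with Gaussian integrals. The one structural point worth noting is that the exponent simplification $-\tfrac{1}{2s}+\tfrac{q-1}{2qs}=-\tfrac{1}{2qs}$ produces an integrable Gaussian only because $q>1$, and this is precisely the hypothesis under which \eqref{improved Aronson estimates} was already stated, so the restriction is inherited rather than imposed afresh.
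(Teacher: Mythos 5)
Your proposal is correct and takes essentially the same route as the paper: both proofs integrate the improved Aronson bound over $z$, combine the exponents so that $-\tfrac{1}{2(t-\tau)}+\tfrac{q-1}{2q(t-\tau)}=-\tfrac{1}{2q(t-\tau)}$ yields a Gaussian of variance $q(t-\tau)$, and then evaluate the two Gaussian moments. The paper phrases the last step via the fact that $|Z|$ for a $2$D standard normal $Z$ is $\chi(2)$-distributed with $\mathbb{E}[|Z|]=\sqrt{\pi/2}$, whereas you compute the radial integrals explicitly; these are the same calculation.
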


\begin{proof}
The first inequality is just the improved Aronson estimate \eqref{improved Aronson estimates}. The rest then follows immediately, as $\frac{e^{-\frac{|z-y|^{2}}{2(t-\tau)}}}{2\pi(t-\tau)}$ is the density of the 2D Normal distribution $\mathcal{N}\left(y,(t-\tau)\mathrm{I}\right)$
\begin{align*}
 & \int_{\mathbb{R}^{2}}\frac{e^{-\frac{|z-y|^{2}}{2(t-\tau)}}}{2\pi(t-\tau)}\left(1+\kappa(q)\frac{C_{K}}{\sqrt{2\nu}}\left(\sqrt{t-\tau}+\left|z-y\right|\right)e^{\frac{q-1}{2q(t-\tau)}|z-y|^{2}+\frac{C_{K}^{2}(t-\tau)}{4(q-1)\nu}}\right)\mathrm{d}z\\
 & \qquad =1+\kappa(q)\frac{C_{K}}{\sqrt{2\nu}}e^{\frac{C_{K}^{2}(t-\tau)}{4(q-1)\nu}}\sqrt{t-\tau}~q\int_{\mathbb{R}^{2}}\left(1+\frac{\left|z-y\right|}{\sqrt{t-\tau}}\right)\frac{e^{-\frac{\left|z-y\right|^{2}}{2(t-\tau)q}}}{2\pi(t-\tau)q}\mathrm{d}z\\
 & \qquad =1+\kappa(q)\frac{C_{K}}{\sqrt{2\nu}}e^{\frac{C_{K}^{2}(t-\tau)}{4(q-1)\nu}}\sqrt{t-\tau}~q\left(1+\sqrt{q}\int_{\mathbb{R}^{2}}\left|z\right|\frac{e^{-\frac{\left|z\right|^{2}}{2}}}{2\pi}\mathrm{d}z\right).
\end{align*}

Since the the norm $|Z|$ of a 2D standard Normal distribution $Z\sim\mathcal{N}(0,\mathrm{I})$
is $\chi(2)$-distributed, we have 
\begin{align*}
\int_{\mathbb{R}^{2}}\left|z\right|\frac{e^{-\frac{\left|z\right|^{2}}{2}}}{2\pi}\mathrm{d}z=\mathbb{E}\left[\left|Z\right|\right]=\sqrt{\frac{\pi}{2}},
\end{align*}
which concludes our proof. 
\end{proof}

Furthermore we make a remark with the following change of variables formula for
the transition kernel.
\begin{rem*}
\label{remark - transition probabilities of reflections} For any drift function $c:\mathbb{R}^{2}\times\left[0,\infty\right)\to\mathbb{R}^{2}$ and any $z\in\mathbb{R}^{2}$, define
\begin{align}
c_{z-}(x,t):=-c(z-x,t)~\text{and}~\hat{X}^{c}(\tau,\eta,t):=z-X^{c}(\tau,\eta,t).\label{definition - reflected drift}
\end{align}
Then for $0\leq\tau\leq t$, the process $\hat{X}^{c}$ solves 
\begin{align*}
\hat{X}^{c}(\tau,\eta,t) & =z-X^{c}(\tau,\eta,t)=z-\eta+\int_{\tau}^{t}(-c)(X^{c}(\tau,\eta,s),s)\mathrm{d}s-\sqrt{2\nu}(B_{t}-B_{\tau})\\
 & =(z-\eta)+\int_{\tau}^{t}c_{z-}(\hat{X}^{c}(\tau,\eta,s),s)\mathrm{d}s-\sqrt{2\nu}(B_{t}-B_{\tau}).
\end{align*}
Since $-(B_{t}-B_{\tau})$ is also a standard Brownian motion on $[\tau,+\infty)$, $\hat{X}^{c}$ is a (weak) solution to the SDE with drift $c_{z-}$, and therefore 
\begin{align}
p_{c}(\tau,\eta,t,z-y)=p_{c_{z-}}(\tau,z-\eta,t,y).\label{reflection formula for transition densities}
\end{align}
Note that $\sup_{t\leq T,x\in\mathbb{R}^{2}}\left|c_{z-}(x,t)\right|=\sup_{t\leq T,x\in\mathbb{R}^{2}}\left|c(x,t)\right|$ --- in particular, if $p_{c}$ satisfies the Aronson estimates \eqref{rough Aronson estimate} or \eqref{improved Aronson estimates}, then so does $p_{c_{z-}}$! 
\end{rem*}

This lets us prove the following key estimate.
\begin{lem}\label{Lemma -abstract bounds of convolution integral} 
Let $b:\mathbb{R}^{2}\times[0,T]\to\mathbb{R}^{2}$ be a bounded function with $\sup_{t\leq T,x\in\mathbb{R}^{2}}\left|b(x,t)\right|\leq C_{K}$, and $\gamma\in(1,2)$. Furthermore let $f:\mathbb{R}^{2}\times\mathbb{R}^{2}\to\mathbb{R}$ be a bounded function and define the seminorm 
\begin{align*}
\left\Vert f\right\Vert _{\mathrm{diag}_{1}}:=\sup_{a\in\mathbb{R}^{2}}\int_{\mathbb{R}^{2}}\left|f(z-a,z)\right|\mathrm{d}z.
\end{align*}
Let $\Theta:\mathbb{R}^{2}\times\mathbb{R}^{2}\to\mathbb{R}^{2}$
be such that 
\begin{align*}
\left|\Theta(x,y)\right|\leq\frac{\epsilon(x-y)}{|x-y|^{\gamma}}
\end{align*}
for some function $\epsilon:\mathbb{R}\to(0,\infty)$. Then for
any $q\in(1,2)$,
\begin{itemize}
\item the following inequality holds for all $x\in\mathbb{R}^{2}$, $0\leq\tau\leq t\leq T$
\begin{align}
\begin{split} & \int_{\mathbb{R}^{2}}\int_{\mathbb{R}^{2}}\left|\Theta(x,y)\right|~\left|f(y,z)\right|p_{b}(\tau,z,t,y)\mathrm{d}y\mathrm{d}z\\
 & \leq\bar{\kappa}\left(q,\frac{C_{K}\sqrt{t-\tau}}{\sqrt{2\nu}}\right)\left(\int_{|z|\leq1}\frac{\epsilon(z)}{|z|^{\gamma}}\mathrm{d}z\left\Vert f\right\Vert _{\infty}+\left(\sup_{|z|\geq1}\frac{\epsilon(z)}{|z|^{\gamma}}\right)\left\Vert f\right\Vert _{\mathrm{diag}_{1}}\right)\text{,}
\end{split}
\label{general a priori estimate - abstract kernel}
\end{align}
\item consequently, when $\left|\Theta(x,y)\right|\leq|x-y|^{-\gamma}$,
we have 
\begin{align}
\begin{split}
\int_{\mathbb{R}^{2}}\int_{\mathbb{R}^{2}}\frac{\left|f(y,z)\right|}{\left|y-x\right|^{\gamma}}p_{b}(\tau,z,t,y)\mathrm{d}z\mathrm{d}y\leq&\left(\frac{2\pi}{2-\gamma}\left\Vert f\right\Vert _{\infty}+\left\Vert f\right\Vert _{\mathrm{diag}_{1}}\right) \\ & \qquad\times \bar{\kappa}\left(q,\frac{C_{K}\sqrt{t-\tau}}{\sqrt{2\nu}}\right).\label{general a priori estimate - Biot savart kernel}
\end{split}
\end{align}
\end{itemize}
\end{lem}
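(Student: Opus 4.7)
The plan is to split the $y$-integration region according to whether $|x-y|\leq 1$ (where the kernel singularity is concentrated) or $|x-y|>1$ (where the kernel is uniformly small), and treat the two pieces separately, using Corollary \ref{improved Aronson - integral} in each case. For the near-diagonal piece, I would bound $|f(y,z)|\leq\|f\|_\infty$ and integrate the transition density in $z$ first, which by Corollary \ref{improved Aronson - integral} produces the factor $\bar\kappa(q,C_K\sqrt{t-\tau}/\sqrt{2\nu})$ uniformly in $y$. What is left is $\|f\|_\infty\int_{|w|\leq 1}\epsilon(w)/|w|^\gamma\,\mathrm{d}w$ after the obvious substitution $w=x-y$.

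For the far-diagonal piece the factor $\epsilon(x-y)/|x-y|^\gamma$ is bounded by $\sup_{|w|\geq 1}\epsilon(w)/|w|^\gamma$, which I pull outside. The remaining double integral $\int\!\!\int|f(y,z)|\,p_b(\tau,z,t,y)\,\mathrm{d}z\,\mathrm{d}y$ is where $\|f\|_{\mathrm{diag}_1}$ must come from, and this is the main subtlety. The crucial observation is that the improved Aronson estimate \eqref{improved Aronson estimates} dominates $p_b(\tau,z,t,y)$ by a quantity $h(|z-y|)$ depending only on $|z-y|$. Introducing $a=z-y$ in the inner integral, this translation-invariant bound separates from the $y$-variable, so Fubini gives
\begin{align*}
\int\!\!\int|f(y,z)|\,p_b(\tau,z,t,y)\,\mathrm{d}z\,\mathrm{d}y \;\leq\; \int h(|a|)\left(\int|f(z-a,z)|\,\mathrm{d}z\right)\mathrm{d}a.
\end{align*}
The inner integral is $\leq\|f\|_{\mathrm{diag}_1}$ uniformly in $a$, and $\int h(|a|)\,\mathrm{d}a=\bar\kappa(q,C_K\sqrt{t-\tau}/\sqrt{2\nu})$ by Corollary \ref{improved Aronson - integral} once more. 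Summing the two pieces yields \eqref{general a priori estimate - abstract kernel}.

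The second bullet is then immediate: taking $\epsilon\equiv 1$, I compute $\int_{|w|\leq 1}|w|^{-\gamma}\,\mathrm{d}w=2\pi\int_0^1 r^{1-\gamma}\,\mathrm{d}r=\frac{2\pi}{2-\gamma}$ (this is where $\gamma<2$ is used for integrability at the origin), and $\sup_{|w|\geq 1}|w|^{-\gamma}=1$; substituting into \eqref{general a priori estimate - abstract kernel} gives \eqref{general a priori estimate - Biot savart kernel}.

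The main obstacle is exactly producing the $\|f\|_{\mathrm{diag}_1}$ seminorm in the far-diagonal piece: naive attempts (e.g.\ bounding $|f|$ by $\|f\|_\infty$ and integrating $p_b$ in $y$ first) only yield $\int 1\,\mathrm{d}z=\infty$. The fix relies specifically on the translation-invariant structure of the Aronson upper bound — had one only the rough estimate \eqref{rough Aronson estimate} the same argument still works, but the improved form \eqref{improved Aronson estimates} is needed to get the sharper constant $\bar\kappa(q,\cdot)$ that controls how the bound degrades with $t-\tau$ and $C_K/\sqrt{\nu}$, which will matter when this lemma is iterated in the fixed-point argument.
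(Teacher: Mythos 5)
Your proof is correct and follows essentially the same route as the paper: the same split at $|x-y|=1$, with the near-singularity piece controlled by $\|f\|_\infty$ and Corollary \ref{improved Aronson - integral}, and the far piece controlled by $\|f\|_{\mathrm{diag}_1}$ after observing that the improved Aronson bound depends only on $|z-y|$. The only cosmetic difference is that the paper first passes through the reflected drift $b_{z-}$ via \eqref{reflection formula for transition densities} before invoking the Aronson estimate, whereas you apply the estimate directly and read off the translation-invariance of the bound; this is a slight streamlining but not a different argument.
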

\begin{proof}
To carry out the estimate, we split up the inner integral into a part close to the singularity and a part that is bounded away from the singular points $\left\{ y=x\right\} $. The integral about the singularity can be bounded as in Qian and Yao \citep{QianYao2022} in the following way
\begin{align*}
 & \int_{\mathbb{R}^{2}}\int_{\left\{ |y-x|<1\right\} }\left|f(y,z)\right|\left|\Theta(x,y)\right|p_{b}(\tau,z,t,y)\mathrm{d}y\mathrm{d}z\\
 & \leq\int_{\mathbb{R}^{2}}\int_{\left\{ |y-x|<1\right\} }\frac{\left|f(y,z)\right|\epsilon(y-x)}{\left|y-x\right|^{\gamma}}\frac{e^{-\frac{|z-y|^{2}}{2(t-\tau)}}}{2\pi(t-\tau)}\\
 & \qquad\times\left(1+\kappa(q)\frac{C_{K}}{\sqrt{2\nu}}\left(\sqrt{t-\tau}+\left|z-y\right|\right)e^{\frac{q-1}{2q(t-\tau)}|z-y|^{2}+\frac{C_{K}^{2}(t-\tau)}{4(q-1)\nu}}\right)\mathrm{d}y~\mathrm{d}z\\
 & =\int_{\mathbb{R}^{2}}\int_{\left\{ |y|<1\right\} }\frac{\left|f(y+x,y+x-z)\right|\epsilon(y)}{\left|y\right|^{\gamma}}\frac{e^{-\frac{|z|^{2}}{2(t-\tau)}}}{2\pi(t-\tau)}\\
 & \qquad\times\left(1+\kappa(q)\frac{C_{K}}{\sqrt{2\nu}}\left(\sqrt{t-\tau}+\left|z\right|\right)e^{\frac{q-1}{2q(t-\tau)}|z|^{2}+\frac{C_{K}^{2}(t-\tau)}{4(q-1)\nu}}\right)\mathrm{d}y\mathrm{d}z\\
 & \leq\left\Vert f\right\Vert _{\infty}~\int_{\left\{ |y|<1\right\} }\frac{\epsilon(y)}{\left|y\right|^{\gamma}}\mathrm{d}y~\bar{\kappa}\left(q,\frac{C_{K}\sqrt{t-\tau}}{\sqrt{2\nu}}\right).
\end{align*}

To estimate the integral away from the singular points, we note that by the change of variables mentioned above and the improved Aronson estimate \eqref{improved Aronson estimates} one has
\begin{align*}
 & \int_{\mathbb{R}^{2}}\int_{\left\{ |y-x|\geq1\right\} }\left|f(y,z)\right|\left|\Theta(x,y)\right|p_{b}(\tau,z,t,y)\mathrm{d}y\mathrm{d}z \\ 
 & \leq\left(\sup_{|z|\geq1}\frac{\epsilon(z)}{|z|^{\gamma}}\right)\int_{\mathbb{R}^{2}}\int_{\mathbb{R}^{2}}\left|f(y,z)\right|p_{b}(\tau,z,t,y)\mathrm{d}y\mathrm{d}z\\
 & =\left(\sup_{|z|\geq1}\frac{\epsilon(z)}{|z|^{\gamma}}\right)\int_{\mathbb{R}^{2}}\int_{\mathbb{R}^{2}}\left|f(z-y,z)\right|p_{b_{z-}}(\tau,0,t,y)\mathrm{d}y\mathrm{d}z\\
 & \leq\left(\sup_{|z|\geq1}\frac{\epsilon(z)}{|z|^{\gamma}}\right)\int_{\mathbb{R}^{2}}\int_{\mathbb{R}^{2}}\left|f(z-y,z)\right|\mathrm{d}z\frac{e^{-\frac{|y|^{2}}{2(t-\tau)}}}{2\pi t} \\ 
 & \qquad \times\left(1+\kappa\frac{C_{K}}{\sqrt{2\nu}}\left(\sqrt{t-\tau}+\left|y\right|\right)e^{\frac{q-1}{2q(t-\tau)}|y|^{2}+\frac{C_{K}^{2}t}{4(q-1)\nu}}\right)\mathrm{d}y\\
 & \leq\left(\sup_{|z|\geq1}\frac{\epsilon(z)}{|z|^{\gamma}}\right)\left\Vert f\right\Vert _{\mathrm{diag}_{1}}\bar{\kappa}\left(q,\frac{C_{K}\sqrt{t-\tau}}{\sqrt{2\nu}}\right),
\end{align*}
which proves \eqref{general a priori estimate - abstract kernel}. The inequality \eqref{general a priori estimate - Biot savart kernel} is then just a trivial consequence of \eqref{general a priori estimate - abstract kernel} where we take $\epsilon \equiv 1$. 
\end{proof}
Next we show a key Lipschitz estimate for the operator $\mathcal{K}\left(K_{\delta},\chi_{R},\omega_{0},G\right)\diamond$.
\begin{thm}
\label{Theorem - abstract Lipschitz estimate} Assume we are given
some time frame $T_{K}\geq0$ and some upper bound for the drift $C_{K}\geq0$.
Let the kernel function $\Theta:\mathbb{R}^{2}\times\mathbb{R}^{2}\to\mathbb{R}^{2}$
be such that for some $\gamma\in[0,2)$ and some constant $C_{\Theta}>0$, one has
\begin{align}
\left|\Theta(x,y)\right| & \leq\frac{C_{\Theta}}{|x-y|^{\gamma}}.\label{kernel condition - in Theorem}
\end{align}
Let furthermore $f:\mathbb{R}^{2}\to\mathbb{R}$ be bounded, such
that 
\begin{align}
\vertiii{f}_{T_{K}}:=\sup_{t\leq T_{K}}\int_{\mathbb{R}^{2}}\sqrt{\mathbb{E}\left[\left|f(z-\sqrt{2\nu}B_{t},z)\right|^{2}\right]}\mathrm{d}z<\infty~\text{and}~\left\Vert f\right\Vert _{\mathrm{diag}_{1}}<+\infty.\label{integral conditions}
\end{align}
Finally we set for $\alpha,\beta>1$ with $\frac{1}{\alpha}+\frac{1}{\beta}=1$
and $\alpha\gamma<2$ 
\begin{align}
\begin{split}C_{L}(R,T,f) & :=C_{\Theta}\bar{\kappa}\left(q,R\right)\left(\frac{2\pi}{2-\gamma}\left\Vert f\right\Vert _{\infty}+\left\Vert f\right\Vert _{\mathrm{diag}_{1}}\right)\frac{R\sqrt{T}}{4\nu}+C_{\Theta}e^{\frac{\alpha-1}{4\nu}R^{2}}\\
 & \qquad\times\left(\bar{\kappa}\left(q,\alpha R\right)\left(\frac{2\pi}{2-\alpha\gamma}\left\Vert f\right\Vert _{\infty}+\left\Vert f\right\Vert _{\mathrm{diag}_{1}}\right)\right)^{1/\alpha}\frac{\sqrt{T}\vertiii{f}_{T}^{1/\beta}}{\sqrt{\nu}}.
\end{split}
\label{Definition C_L}
\end{align}
Then for all times $T\leq T_{K}$ and any drift functions $b,\tilde{b}:\mathbb{R}^{2}\times[0,T_{K}]\to\mathbb{R}^{2}$
with $\left\Vert b\right\Vert _{\infty},\left\Vert \tilde{b}\right\Vert _{\infty}\leq C_{K}$,
we have 
\begin{align}
\begin{split}
&\sup_{x\in\mathbb{R}^{2},t\leq T}  \left|\int_{\mathbb{R}^{2}}\int_{\mathbb{R}^{2}}\Theta(x,y)\left(p_{b}\left(0,z,t,y\right)-p_{\tilde{b}}\left(0,z,t,y\right)\right)f(y,z)\mathrm{d}y\mathrm{d}z\right|\\
 &\qquad\leq C_{L}\left(\frac{C_{K}\sqrt{T}}{\sqrt{2\nu}},T,f\right)\left(\sqrt{T}+T\right)\sup_{x\in\mathbb{R}^{2},t\leq T}\left|b(x,t)-\tilde{b}(x,t)\right|.
\end{split}
\label{abstract local Lipschitz estimate}
\end{align}
\end{thm}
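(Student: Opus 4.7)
The plan is to reduce the difference $p_b-p_{\tilde b}$ to a time integral that is linear in $b-\tilde b$ via a Duhamel identity, and then to control the resulting integrals using Lemma \ref{Lemma -abstract bounds of convolution integral} together with Corollary \ref{improved Aronson - integral}. For fixed $y,z$, I would first observe that the function $\phi(s):=\int p_{\tilde b}(0,z,s,\xi)p_b(s,\xi,t,y)\,\mathrm{d}\xi$ satisfies $\phi(0)=p_b(0,z,t,y)$ and $\phi(t)=p_{\tilde b}(0,z,t,y)$. Differentiating in $s$ and using the forward Kolmogorov equation for $p_{\tilde b}$ together with the backward Kolmogorov equation for $p_b$ (with integration by parts in $\xi$) yields
\begin{equation*}
p_b(0,z,t,y)-p_{\tilde b}(0,z,t,y)=\int_0^t\!\!\int_{\mathbb{R}^2} p_{\tilde b}(0,z,s,\xi)\,(b-\tilde b)(\xi,s)\cdot\nabla_\xi p_b(s,\xi,t,y)\,\mathrm{d}\xi\,\mathrm{d}s.
\end{equation*}
Substituting into the integral to be estimated and applying Fubini, the left-hand side of \eqref{abstract local Lipschitz estimate} factors through the inner quantity $\Phi(\xi,s):=\iint p_{\tilde b}(0,z,s,\xi)\Theta(x,y)f(y,z)\nabla_\xi p_b(s,\xi,t,y)\,\mathrm{d}y\,\mathrm{d}z$, and it is reduced to a uniform $L^\infty_{x,t}$ bound on $\int_0^t\!\int|b-\tilde b|\,|\Phi|\,\mathrm{d}\xi\,\mathrm{d}s$.

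The next step is a pointwise gradient Aronson-type estimate. The improved Aronson estimate \eqref{improved Aronson estimates} admits a differentiated form $|\nabla_\xi p_b(s,\xi,t,y)|\le C(t-s)^{-1/2}\tilde p_b(s,\xi,t,y)$ with $\tilde p_b$ still satisfying Gaussian-type bounds of the same structure as in \eqref{improved Aronson estimates}. Plugging this into $|\Phi|$ and performing the $z$-integration first, the factor $\int p_{\tilde b}(0,z,s,\xi)\,\mathrm{d}z$ is controlled by $\bar\kappa(q,C_K\sqrt{s}/\sqrt{2\nu})$ via Corollary \ref{improved Aronson - integral} (applied after invoking the reflection formula \eqref{reflection formula for transition densities} from the Remark). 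The remaining $y,\xi$ integral is of exactly the form handled by Lemma \ref{Lemma -abstract bounds of convolution integral}, with $\Theta$ producing the factor $C_\Theta/|x-y|^\gamma$ and $f$ contributing both $\|f\|_\infty$ (through the local part $|y-x|<1$) and $\|f\|_{\mathrm{diag}_1}$ (through the far-field part). The time factor $\int_0^t(t-s)^{-1/2}\,\mathrm{d}s=2\sqrt{t}$ is where $\sqrt T$ enters, yielding the first term of $C_L$ with prefactor $\frac{2\pi}{2-\gamma}\|f\|_\infty+\|f\|_{\mathrm{diag}_1}$ and the expected $\bar\kappa(q,R)R/\nu$ dependence.

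To cover the full singularity range $\gamma\in[0,2)$ one cannot rely on $L^\infty$-bounds alone when $\gamma$ is close to $2$, since the Cauchy--Schwarz choice $|\Theta|^2\sim|x-y|^{-2\gamma}$ fails to be locally integrable as soon as $\gamma\ge 1$. I therefore split the $y$-integral by Hölder with exponents $\alpha,\beta$ satisfying $\alpha^{-1}+\beta^{-1}=1$ and $\alpha\gamma<2$, so that $|x-y|^{-\alpha\gamma}$ remains integrable near $y=x$. For the local part, Lemma \ref{Lemma -abstract bounds of convolution integral} applies with exponent $\alpha\gamma$ and $\epsilon\equiv 1$, producing the $\bigl(\tfrac{2\pi}{2-\alpha\gamma}\|f\|_\infty+\|f\|_{\mathrm{diag}_1}\bigr)^{1/\alpha}$ factor. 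The matching $L^\beta$-factor is controlled by the seminorm $\vertiii{f}_T^{1/\beta}$ defined in \eqref{integral conditions}, together with an exponential correction $e^{(\alpha-1)R^2/(4\nu)}$ that arises precisely because we raised the Gaussian factor of the improved Aronson estimate to the $\alpha$-th power. This reproduces the second summand of $C_L$ in \eqref{Definition C_L}, and the quadratic-variation contribution of the Girsanov density (i.e.\ the $\tfrac{1}{4\nu}\int|b-\tilde b|^2\,\mathrm{d}s$ drift correction) produces the additive $T$ in the final $(\sqrt T+T)$ factor.

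The main obstacle is the interplay between the singular kernel $\Theta$ (which becomes borderline non-integrable as $\gamma\uparrow 2$) and the gradient factor $\nabla_\xi p_b$ (which degenerates as $s\uparrow t$). The Hölder split with the constraint $\alpha\gamma<2$ is the decisive device: it localises the singularity integrably, shifts the loss onto the $L^\beta$-moment of the Gaussian part, and then Lemma \ref{Lemma -abstract bounds of convolution integral} and Corollary \ref{improved Aronson - integral} provide exactly the right integrability bookkeeping to close the estimate uniformly in $x\in\mathbb{R}^2$ and $t\le T$.
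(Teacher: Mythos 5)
Your proposal takes a genuinely different route from the paper: a Duhamel (parametrix) identity for $p_{b}-p_{\tilde b}$ plus a gradient Aronson estimate, whereas the paper works entirely through the Cameron--Martin theorem, rewriting the $y$-integrals as expectations of Radon--Nikodym densities $R_{b_{z-}}$, decomposing $\log R_{b_{z-}}-\log R_{\tilde b_{z-}}$ into a finite-variation part $A$ and a martingale part $M$, and then applying H\"older and BDG. Unfortunately, the alternative route has a genuine gap at its central step. The asserted gradient estimate $|\nabla_\xi p_b(s,\xi,t,y)|\le C(t-s)^{-1/2}\tilde p_b(s,\xi,t,y)$ for a drift that is merely bounded and measurable is not a consequence of the Aronson bounds \eqref{rough Aronson estimate} or \eqref{improved Aronson estimates}; it is a separate, substantially harder fact (of parametrix or Zvonkin-transformation type), and none of the tools available in Section \ref{section mean field analysis} (Corollary \ref{improved Aronson - integral}, Remark on reflections, Lemma \ref{Lemma -abstract bounds of convolution integral}) provide it. Indeed the whole purpose of the Cameron--Martin route in the paper is to avoid ever differentiating $p_b$ in space: the ``gradient sensitivity'' in $b$ shows up instead as the stochastic integral $M(z,t)$, whose moments are controlled by BDG, not by a Gaussian gradient bound.

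A second, related issue is the appearance of the seminorm $\vertiii{f}_{T_K}=\sup_{t\le T_K}\int\sqrt{\mathbb{E}[|f(z-\sqrt{2\nu}B_t,z)|^2]}\,\mathrm{d}z$. In the paper this comes out automatically: after the reflection change of variables and Cameron--Martin, the $y$-integral becomes $\mathbb{E}[\,\cdot\,f(z-\sqrt{2\nu}B_t,z)]$, and Cauchy--Schwarz inside $\mathbb{E}[|M|^\beta|f|]$ produces exactly $\sqrt{\mathbb{E}[\langle M\rangle^\beta]}\,\sqrt{\mathbb{E}[|f(z-\sqrt{2\nu}B_t,z)|^2]}$. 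In your Duhamel framework there is no Brownian change of variables and hence no natural way to land on this particular $L^2$-against-the-heat-kernel object; you would instead obtain convolutions of $f$ against the various $p_{\tilde b}(0,z,s,\cdot)$ and $p_b(s,\cdot,t,\cdot)$, which is not the same quantity. Finally, your closing sentence attributes the additive $T$ in $(\sqrt T+T)$ to ``the quadratic-variation contribution of the Girsanov density,'' but that mechanism belongs to the Cameron--Martin proof and is unavailable once you have chosen the Duhamel route; in the paper the $T$ factor arises from the finite-variation part $A$ via \eqref{estimate - finite variation part} and \eqref{Lipschitz proof - finite variation part}, while the Duhamel identity alone only supplies $\int_0^t(t-s)^{-1/2}\,\mathrm{d}s=2\sqrt t$. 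If you want to pursue a Duhamel-based proof you would need to supply and cite a gradient Aronson estimate for bounded measurable drift, and rework the $L^\beta$ part of the H\"older split so that it can be compared to the seminorm $\vertiii{f}$ as defined in the theorem.
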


\begin{rem*}\label{Remark - admissible f} 
Note that for integrable $f$, only depending on the second variable $z$, i.e. $f(y,z)=f(z)$, it holds that 
\begin{align*}
\left\Vert f\right\Vert _{\mathrm{diag}_{1}}=\vertiii{f}=\left\Vert f\right\Vert _{1}<\infty,
\end{align*}
thus, Theorem \ref{Theorem - abstract Lipschitz estimate} is a true
generalisation of the result of Qian and Yao \citep{QianYao2022}. For $f$ only
depending on the first variable, i.e. $f(y,z)=f(y)$, we also have
$\left\Vert f\right\Vert _{\mathrm{diag}_{1}}=\left\Vert f\right\Vert _{1}$.
Assume furthermore that $f\in C_{0}^{0}\left(\mathbb{R};\mathbb{R}\right)$
has compact support. Let $R:=\sup_{z\in\mathrm{spt}f}|z|$, then for
all $z\in\mathbb{R}^{2}$, with $\left|z\right|\geq R$ 
\begin{align*}
\mathbb{E}\left[\left|f\left(z-\sqrt{2\nu}B_{t}\right)\right|^{2}\right]\leq\left\Vert f\right\Vert _{\infty}^{2}\mathbb{P}\left(\left|\sqrt{2\nu}B_{t}\right|\geq\left|z\right|-R\right)&\\
\leq\left\Vert f\right\Vert _{\infty}^{2}\mathbb{E}\left[e^{\sqrt{2\nu}\left|B_{t}\right|}\right]~e^{-\left|z\right|+R}\leq\left\Vert f\right\Vert _{\infty}^{2}\frac{e^{8\nu t+R}}{2}e^{-|z|},&
\end{align*}
where the last inequality can be derived by using Young's inequality
and the moment generating function of the $\chi^{2}$-distribution.
Since $\int_{\mathbb{R}^{2}}e^{-|z|/2}\mathrm{d}z=8\pi$, we thus
derive 
\begin{align*}
\vertiii{f}_{T_{K}} &=\sup_{t\leq T_{K}}\int_{\mathbb{R}^{2}}\sqrt{\mathbb{E}\left[\left|f(z-\sqrt{2\nu}B_{t},z)\right|^{2}\right]}\mathrm{d}z \\ &=\left\Vert f\right\Vert _{\infty}\left(R^{2}\pi+\sup_{t\leq T_{K}}\frac{e^{4\nu t+R/2}}{\sqrt{2}}8\pi\right)\\
&\leq\left\Vert f\right\Vert _{\infty}\left(\sup_{z\in\mathrm{spt}f}|z|^{2}\pi+\frac{8\pi e^{4\nu T_{k}+\frac{\sup_{z\in\mathrm{spt}f}|z|}{2}}}{\sqrt{2}}\right).
\end{align*}
Thus for any function $f$ only depending on the the first variable,
the conditions of Theorem \ref{Theorem - abstract Lipschitz estimate} are also satisfied if it has compact support. In particular both $\omega_{0}$ and $G(\cdot,t)$ for every finite $t\geq0$ are appropriate choices for $f$. 
\end{rem*}
For $f$ only depending on $y$, but not on $z$, a version of Theorem
\ref{Theorem - abstract Lipschitz estimate} was proven by Qian and
Yao in \citep{QianYao2022}. A key tool for their analysis, that we also use in our generalization, is the Cameron-Martin theorem,
which, for the convenience of the reader, we summarize in the following
remark.
\begin{rem*}
\label{Cameron-Martin theorem} The Cameron--Martin theorem determines
the Radon--Nikodym derivative of a diffusion measure w.r.t. the Wiener
measure on a finite horizon path space, i.e. for any $t\geq0$ and
any drift function $c$, the law of $\left(X^{c}(z,s)\right)_{0\leq s\leq t}$, denoted by $Q^{z,t}$, is absolutely continuous w.r.t. the law $\mathbb{P}^{B}$ of $(B_{s})_{0\leq s\leq t}$ (the Wiener measure) and its Radon--Nikodym derivative is given by 
\begin{align*}
\frac{\mathrm{d}Q^{z,t}}{\mathrm{d}\mathbb{P}^{B}}=R_{c}(z,t):=\exp\left(N_{c}(z,t)\right),
\end{align*}
where the semimartingale $N_{c}(z,t)$ is of the form 
\begin{align*}
N_{c}(z,t):=\frac{1}{\sqrt{2\nu}}\int_{0}^{t}c(z+\sqrt{2\nu}B_{r},r)\mathrm{d}B_{r}-\frac{1}{4\nu}\int_{0}^{t}\left|c(z+\sqrt{2\nu}B_{r},r)\right|^{2}\mathrm{d}r.
\end{align*}
Therefore we have for any function $g$ 
\begin{align*}
\mathbb{E}\left[g(X^{c}(t,z))\right]=\int_{\mathbb{R}^{2}}g(y)p_{c}(0,z,t,y)\mathrm{d}y=\mathbb{E}\left[g(\sqrt{2\nu}B_{t})R_{c}(z,t)\right].
\end{align*}
\end{rem*}
With this in mind let us now prove Theorem \ref{Theorem - abstract Lipschitz estimate}. 
\begin{proof}[Proof of Theorem \ref{Theorem - abstract Lipschitz estimate}]
We can not directly use the approach of \citep[Lemma 8]{QianYao2022},
since $f$ may not be integrable with respect to $z$. Therefore we
make a change of variables to note that 
\begin{align*}
 & \left|\int_{\mathbb{R}^{2}}\int_{\mathbb{R}^{2}}\Theta(x,y)\left(p_{b}\left(0,z,t,y\right)-p_{\tilde{b}}\left(0,z,t,y\right)\right)f(y,z)\mathrm{d}y\mathrm{d}z\right|\\
 & =\left|\int_{\mathbb{R}^{2}}\left(\int_{\mathbb{R}^{2}}\Theta(x,z-y)p_{b}\left(0,z,t,z-y\right)f(z-y,z)\mathrm{d}y\right.\right.\\
 & \qquad-\left.\left.\int_{\mathbb{R}^{2}}\Theta(x,z-y)p_{\tilde{b}}\left(0,z,t,z-y\right)f(z-y,z)\mathrm{d}y\right)\mathrm{d}z\right|\\
 & =\left|\int_{\mathbb{R}^{2}}\left(\int_{\mathbb{R}^{2}}\Theta(x,z-y)p_{b_{z-}}\left(0,0,t,y\right)f(z-y,z)\mathrm{d}y\right.\right.\\
 & \qquad-\left.\left.\int_{\mathbb{R}^{2}}\Theta(x,z-y)p_{\tilde{b_{z-}}}\left(0,0,t,y\right)f(z-y,z)\mathrm{d}y\right)\mathrm{d}z\right|
\end{align*}
where the last identity is a consequence of \eqref{reflection formula for transition densities}. Therefore the Cameron--Martin theorem gives us 
\begin{align}
\begin{split} & \left|\int_{\mathbb{R}^{2}}\int_{\mathbb{R}^{2}}\Theta(x,y)\left(p_{b}\left(0,z,t,y\right)-p_{\tilde{b}}\left(0,z,t,y\right)\right)f(y,z)\mathrm{d}y\mathrm{d}z\right|\\
 & =\left|\int_{\mathbb{R}^{2}}\mathbb{E}\left[\Theta(x,z-\sqrt{2\nu}B_{t})\left(R_{b_{z-}}(0,t)-R_{\tilde{b}_{z-}}(0,t)\right)f(z-\sqrt{2\nu}B_{t},z)\right]\mathrm{d}z\right|.
\end{split}
\label{Lipschitz estimate in Cameron Martin form}
\end{align}

We define 
\begin{align*}
M(z,t) & :=\frac{1}{\sqrt{2\nu}}\int_{0}^{t}\left(b_{z-}(\sqrt{2\nu}B_{r},r)-\tilde{b}_{z-}(\sqrt{2\nu}B_{r},r)\right)\mathrm{d}B_{r}\\
A(z,t) & :=-\frac{1}{4\nu}\int_{0}^{t}\left|b_{z-}(\sqrt{2\nu}B_{r},r)\right|^{2}-\left|\tilde{b}_{z-}(\sqrt{2\nu}B_{r},r)\right|^{2}\mathrm{d}r,
\end{align*}
and note that the following holds for the finite variation
part $A$ 
\begin{equation*}
\sup_{z\in\mathbb{R}^{2},s\leq t}\left|A(z,s)\right| =\sup_{z\in\mathbb{R}^{2},s\leq t}\left|\frac{1}{4\nu}\int_{0}^{s}\left|b_{z-}(\sqrt{2\nu}B_{r},r)\right|^{2}-\left|\tilde{b}_{z-}(\sqrt{2\nu}B_{r},r)\right|^{2}\mathrm{d}r\right|
\end{equation*}
which implies
\begin{equation}
\sup_{z\in\mathbb{R}^{2},s\leq t}\left|A(z,s)\right| \leq\frac{C_{K}t}{4\nu}\left\Vert b-\tilde{b}\right\Vert _{\infty},
\label{estimate - finite variation part}
\end{equation}
and for the local martingale $M$
\begin{align}
\sup_{z\in\mathbb{R}^{2}}\left\langle M(z,t)\right\rangle _{t} & \leq\frac{t}{2\nu}\left\Vert b-\tilde{b}\right\Vert _{\infty}^{2}.\label{estimate - quadratic variation}
\end{align}

This then gives us the following estimate 
\begin{align*}
 & \left|\int_{\mathbb{R}^{2}}\mathbb{E}\left[\Theta(x,z-\sqrt{2\nu}B_{t})\left(R_{b_{z-}}(0,t)-R_{\tilde{b}_{z-}}(0,t)\right)f(z-\sqrt{2\nu}B_{t},z)\right]\mathrm{d}z\right|\\
 & \leq\int_{\mathbb{R}^{2}}\mathbb{E}\left[\left|\Theta(x,z-\sqrt{2\nu}B_{t})\right|\int_{0}^{1}R_{\lambda}(z,t)\mathrm{d}\lambda\left(\left|A(z,t)\right|+\left|M(z,t)\right|\right)\left|f(z-\sqrt{2\nu}B_{t},z)\right|\right]\mathrm{d}z,
\end{align*}
where we set $R_{\lambda}(z,t):=\exp\left(\lambda N_{b_{z-}}(0,t)+(1-\lambda)N_{\tilde{b}_{z-}}(0,t)\right)$.
Next we note that if we define $b_{\lambda}:=\lambda b+(1-\lambda)\tilde{b}$
and $b_{\lambda,z-}:=\lambda b_{z-}+(1-\lambda)\tilde{b}_{z-}$, then
\begin{align}
R_{\lambda}(z,t)=R_{b_{\lambda,z-}}(0,t)~e^{-\frac{\lambda(1-\lambda)}{4\nu}\int_{0}^{t}\left|b_{z-}(\sqrt{2\nu}B_{r},r)-b_{z-}(\sqrt{2\nu}B_{r},r)\right|^{2}\mathrm{d}r}\leq R_{b_{\lambda,z-}}(0,t),\label{inequality - convex estimate Girsanov density}
\end{align}
and therefore 
\begin{align}
\begin{split} & \left|\int_{\mathbb{R}^{2}}\int_{\mathbb{R}^{2}}\Theta(x,y)\left(p_{b}\left(0,z,t,y\right)-p_{\tilde{b}}\left(0,z,t,y\right)\right)f(y,z)\mathrm{d}y\mathrm{d}z\right|\\
 & \leq\int_{0}^{1}\int_{\mathbb{R}^{2}}\mathbb{E}\left[\left|\Theta(x,z-\sqrt{2\nu}B_{t})\right|R_{b_{\lambda,z-}}(0,t)\right. \\ &\qquad\left.\times\left(\left|A(z,t)\right|+\left|M(z,t)\right|\right)\left|f(z-\sqrt{2\nu}B_{t},z)\right|\right]\mathrm{d}z\mathrm{d}\lambda.
\end{split}
\label{Lipschitz inequality - Cameron martin form}
\end{align}

We estimate the first term, involving the finite variation part $A$
of the differences in the Girsanov-log-likelihoods, using the Cameron--Martin
theorem and Lemma \ref{Lemma -abstract bounds of convolution integral}
\begin{align}
\begin{split} & \int_{\mathbb{R}^{2}}\mathbb{E}\left[\left|\Theta(x,z-\sqrt{2\nu}B_{t})\right|R_{b_{\lambda,z-}}(0,t)\left|A(z,t)\right|\left|f(z-\sqrt{2\nu}B_{t},z)\right|\right]\mathrm{d}z\\
 & \leq\frac{C_{K}t}{4\nu}\left\Vert b-\tilde{b}\right\Vert _{\infty}~\int_{\mathbb{R}^{2}}\mathbb{E}\left[\left|\Theta(x,z-\sqrt{2\nu}B_{t})\right|R_{b_{\lambda,z-}}(0,t)\left|f(z-\sqrt{2\nu}B_{t},z)\right|\right]\mathrm{d}z\\
 & =\frac{C_{K}t}{4\nu}\left\Vert b-\tilde{b}\right\Vert _{\infty}~\int_{\mathbb{R}^{2}}\int_{\mathbb{R}^{2}}\left|\Theta(x,z-y)\right|p_{b_{\lambda,z-}}(0,0,t,y)\left|f(z-y,z)\right|\mathrm{d}y\mathrm{d}z\\
 & \leq C_{\theta}\bar{\kappa}\left(q,\frac{C_{K}\sqrt{t}}{\sqrt{2\nu}}\right)\left(\frac{2\pi}{2-\gamma}\left\Vert f\right\Vert _{\infty}+\left\Vert f\right\Vert _{\mathrm{diag}_{1}}\right)\frac{C_{K}t}{4\nu}\left\Vert b-\tilde{b}\right\Vert _{\infty}.
\end{split}
\label{Lipschitz proof - finite variation part}
\end{align}

Now we use the basic Holder inequality for $\alpha,\beta>1$, such
that $\frac{1}{\alpha}+\frac{1}{\beta}=1$ and $\alpha\gamma<2$,
to derive 
\begin{align*}
 & \int_{\mathbb{R}^{2}}\mathbb{E}\left[\left|\Theta(x,z-\sqrt{2\nu}B_{t})\right|R_{b_{\lambda,z-}}(0,t)\left|M(z,t)\right|\left|f(z-\sqrt{2\nu}B_{t},z)\right|\right]\mathrm{d}z\\
 & \leq\left(\int_{\mathbb{R}^{2}}\mathbb{E}\left[\left|\Theta(x,z-\sqrt{2\nu}B_{t})\right|^{\alpha}R_{b_{\lambda,z-}}(0,t)^{\alpha}\left|f(z-\sqrt{2\nu}B_{t},z)\right|\right]\mathrm{d}z\right)^{1/\alpha}\\
 & \qquad\times\left(\int_{\mathbb{R}^{2}}\mathbb{E}\left[\left|M(z,t)\right|^{\beta}\left|f(z-\sqrt{2\nu}B_{t},z)\right|\right]\mathrm{d}z\right)^{1/\beta}.
\end{align*}

The first term can be estimated as follows. Note that 
\begin{align*}
R_{b_{\lambda,z-}}(0,t)^{\alpha}=R_{\alpha~b_{\lambda,z-}}(0,t)e^{\frac{\left(\alpha-1\right)\alpha}{4\nu}\int_{0}^{t}\left|b_{\lambda,z-}(r,z+\sqrt{2\nu}B_{r})\right|^{2}\mathrm{d}r}\leq R_{\alpha~b_{\lambda,z-}}(0,t)e^{\frac{\left(\alpha-1\right)\alpha}{4\nu}C_{K}^{2}t},
\end{align*}
which gives us 
\begin{align*}
 & \left(\int_{\mathbb{R}^{2}}\mathbb{E}\left[\left|\Theta(x,z-\sqrt{2\nu}B_{t})\right|^{\alpha}R_{b_{\lambda,z-}}(0,t)^{\alpha}\left|f(z-\sqrt{2\nu}B_{t},z)\right|\right]\mathrm{d}z\right)^{1/\alpha}\\
 & \leq e^{\frac{\alpha-1}{4\nu}C_{K}^{2}t}\left(\int_{\mathbb{R}^{2}}\mathbb{E}\left[\left|\Theta(x,z-\sqrt{2\nu}B_{t})\right|^{\alpha}R_{\alpha b_{\lambda,z-}}(0,t)\left|f(z-\sqrt{2\nu}B_{t},z)\right|\right]\mathrm{d}z\right)^{1/\alpha}\\
 & =e^{\frac{\alpha-1}{4\nu}C_{K}^{2}t}\left(\int_{\mathbb{R}^{2}}\int_{\mathbb{R}^{2}}\left|\Theta(x,z-y)\right|^{\alpha}p_{\alpha b_{\lambda,z-}}(0,0,t,y)\left|f(z-y,z)\right|\mathrm{d}z\right)^{1/\alpha}\\
 & =e^{\frac{\alpha-1}{4\nu}C_{K}^{2}t}\left(\int_{\mathbb{R}^{2}}\int_{\mathbb{R}^{2}}\left|\Theta(x,z-y)\right|^{\alpha}p_{\alpha b_{\lambda}}(0,z,t,z-y)\left|f(z-y,z)\right|\mathrm{d}y\mathrm{d}z\right)^{1/\alpha}.
\end{align*}
A change of variables and Lemma \ref{Lemma -abstract bounds of convolution integral},
which is admissible as $\alpha\gamma<2$, gives us 
\begin{align}
\begin{split} & \left(\int_{\mathbb{R}^{2}}\mathbb{E}\left[\left|\Theta(x,z-B_{t})\right|^{\alpha}R_{b_{\lambda,z-}}(0,t)^{\alpha}\left|f(z-\sqrt{2\nu}B_{t},z)\right|\right]\mathrm{d}z\right)^{1/\alpha}\\
 & \leq C_{\theta}e^{\frac{\alpha-1}{4\nu}C_{K}^{2}t}\left(\int_{\mathbb{R}^{2}}\int_{\mathbb{R}^{2}}\left|x-y\right|^{-\alpha\gamma}p_{\alpha b_{\lambda}}(0,z,t,y)\left|f(y,z)\right|\mathrm{d}y\mathrm{d}z\right)^{1/\alpha}\\
 & \leq C_{\theta}e^{\frac{\alpha-1}{4\nu}C_{K}^{2}t}\left(\bar{\kappa}\left(q,\frac{\alpha C_{K}\sqrt{t}}{\sqrt{2\nu}}\right)\left(\frac{2\pi}{2-\alpha\gamma}\left\Vert f\right\Vert _{\infty}+\left\Vert f\right\Vert _{\mathrm{diag}_{1}}\right)\right)^{1/\alpha}.
\end{split}
\label{Lipschitz proof - martingale part 1}
\end{align}

For the second term we again use Cauchy-Schwarz in combination with the BDG inequality to note that 
\begin{align*}
\begin{split} & \left(\int_{\mathbb{R}^{2}}\mathbb{E}\left[\left|M(z,t)\right|^{\beta}\left|f(z-\sqrt{2\nu}B_{t},z)\right|\right]\mathrm{d}z\right)^{1/\beta}\\
 & \leq\left(\int_{\mathbb{R}^{2}}\sqrt{\mathbb{E}\left[\left\langle M(z,t)\right\rangle ^{\beta}\right]}\sqrt{\mathbb{E}\left[\left|f(z-\sqrt{2\nu}B_{t},z)\right|^{2}\right]}\mathrm{d}z\right)^{1/\beta}\\
 & \leq\frac{\sqrt{t}}{\sqrt{\nu}}\left\Vert b-\tilde{b}\right\Vert _{\infty}\left(\int_{\mathbb{R}^{2}}\sqrt{\mathbb{E}\left[\left|f(z-\sqrt{2\nu}B_{t},z)\right|^{2}\right]}\mathrm{d}z\right)^{1/\beta},
\end{split}
\end{align*}
which, due to the definition of $\vertiii{\cdot}_{T_{K}}$, means that
\begin{equation}\left(\int_{\mathbb{R}^{2}}\mathbb{E}\left[\left|M(z,t)\right|^{\beta}\left|f(z-\sqrt{2\nu}B_{t},z)\right|\right]\mathrm{d}z\right)^{1/\beta}\leq\frac{\sqrt{t}\vertiii{f}_{T_{K}}^{1/\beta}}{\sqrt{\nu}}\left\Vert b-\tilde{b}\right\Vert _{\infty}.
\label{Lipschitz proof - martingale part 2}
\end{equation}

Combining \eqref{Lipschitz inequality - Cameron martin form} with
\eqref{Lipschitz proof - finite variation part}, \eqref{Lipschitz proof - martingale part 1}
and \eqref{Lipschitz proof - martingale part 2}, we can conclude
our proof. 
\end{proof}
As Lemma \ref{Lemma -abstract bounds of convolution integral} gives
a local Lipschitz property of $\mathcal{K}\left(K_{\delta},\chi_{R},\omega_{0},G\right)\diamond\cdot$,
we need to localise the domain of the fixed point argument, i.e. we
need to show that, at least for some timeframe $T_{K}$, $\mathcal{K}\left(K_{\delta},\chi_{R},\omega_{0},G\right)\diamond b$
has the same uniform bounds as $b$. Define 
\begin{align*}
C_{K} :=C_{0}~\bar{\kappa}\left(3/2,1\right)\left(2\pi\left\Vert \omega_{0}\right\Vert _{\infty}+\left\Vert \omega_{0}\right\Vert _{1}\right)+C_{0}~\bar{\kappa}\left(3/2,1\right)^{2}& \\ \times\sup_{s\geq0}2\pi\left(\left\Vert G(\cdot,s)\right\Vert _{\infty}+\left\Vert G(\cdot,s)\right\Vert _{1}\right)&,
\end{align*}
and
\begin{equation*}
T_{K} :=\min\left\{ \frac{2\nu}{C_{K}(T)^{2}},1\right\}.
\end{equation*}
Then the following Lemma holds.
\begin{lem}
\label{Lemma - boundedness proof} Assume that the drift $b$ satisfies
$\left|b(x,t)\right|\leq C_{K}$ for all $x\in\mathbb{R}^{2},t\in[0,T_{K}]$.
Then for arbitrary $\delta\geq0$ and $R\geq0$ it holds that 
\begin{equation*}
\left|\left(\mathcal{K}\left(K_{\delta},\chi_{R},\omega_{0},G\right)\diamond b\right)(x,t)\right|\leq C_{K}.
\end{equation*}
\end{lem}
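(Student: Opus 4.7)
The plan is to bound the two terms defining $\mathcal{K}(K_\delta,\chi_R,\omega_0,G)\diamond b$ separately, in both cases exploiting the pointwise bound $|K_\delta(z)|\leq C_0/|z|$ from Assumption 2 so that Lemma \ref{Lemma -abstract bounds of convolution integral} (with $\gamma=1$) applies with $\Theta(x,y)=K_\delta(x-y)$. The choice of $T_K$ and $C_K$ is precisely tuned so that, after invoking that lemma and Corollary \ref{improved Aronson - integral} with $q=3/2$, the constant $\bar\kappa(3/2,C_K\sqrt{t}/\sqrt{2\nu})$ is controlled by $\bar\kappa(3/2,1)$ for every $t\leq T_K$, because $T_K\leq 2\nu/C_K^2$ and $\bar\kappa(q,\cdot)$ is monotone increasing in its second argument.

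For the first (initial-vorticity) term I would write
\begin{equation*}
\int_{\mathbb{R}^2}\mathbb{E}\!\left[K_\delta\!\left(x-X^b(\eta,t)\right)\right]\omega_0(\eta)\,\mathrm{d}\eta
=\int_{\mathbb{R}^2}\!\int_{\mathbb{R}^2}K_\delta(x-y)\,p_b(0,\eta,t,y)\,\omega_0(\eta)\,\mathrm{d}y\,\mathrm{d}\eta
\end{equation*}
and apply the second bullet of Lemma \ref{Lemma -abstract bounds of convolution integral} with $f(y,z):=\omega_0(z)$. Since this $f$ depends only on the starting variable, $\Vert f\Vert_\infty=\Vert\omega_0\Vert_\infty$ and
\begin{equation*}
\Vert f\Vert_{\mathrm{diag}_1}=\sup_{a\in\mathbb{R}^2}\int_{\mathbb{R}^2}|\omega_0(z)|\,\mathrm{d}z=\Vert\omega_0\Vert_1,
\end{equation*}
so the first term is bounded by $C_0\bar\kappa(3/2,1)\bigl(2\pi\Vert\omega_0\Vert_\infty+\Vert\omega_0\Vert_1\bigr)$, which is exactly the first summand of $C_K$.

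For the forcing term, the essential difficulty is that the expectation couples $X^b$ at two different times; I would resolve this with the Markov property:
\begin{equation*}
\mathbb{E}\!\left[K_\delta\!\left(x-X^b(\eta,t)\right)G\!\left(X^b(\eta,s),s\right)\right]=\int\!\!\int K_\delta(x-y)\,G(z,s)\,p_b(0,\eta,s,z)\,p_b(s,z,t,y)\,\mathrm{d}y\,\mathrm{d}z.
\end{equation*}
Using $|\chi_R|\leq 1$, I perform the $\eta$-integration first and absorb it via Corollary \ref{improved Aronson - integral}, obtaining $\int p_b(0,\eta,s,z)\,\mathrm{d}\eta\leq\bar\kappa(3/2,C_K\sqrt{s}/\sqrt{2\nu})\leq\bar\kappa(3/2,1)$. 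The remaining double integral in $(y,z)$ is then exactly of the form treated by Lemma \ref{Lemma -abstract bounds of convolution integral} with $f(y,z):=G(z,s)$, giving
\begin{equation*}
\int\!\!\int\frac{|G(z,s)|}{|x-y|}\,p_b(s,z,t,y)\,\mathrm{d}y\,\mathrm{d}z\leq\bar\kappa(3/2,1)\bigl(2\pi\Vert G(\cdot,s)\Vert_\infty+\Vert G(\cdot,s)\Vert_1\bigr).
\end{equation*}
After multiplying by $C_0$ and the other $\bar\kappa(3/2,1)$, integrating over $s\in[0,t]$ contributes a factor of $t\leq T_K\leq 1$; crudely bounding $2\pi\Vert G\Vert_\infty+\Vert G\Vert_1\leq 2\pi(\Vert G\Vert_\infty+\Vert G\Vert_1)$ yields the second summand of $C_K$. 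Summing the two contributions gives $C_K$ exactly.

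The main obstacle is the second term, because $f$ depends genuinely on both variables through the intermediate time $s$, so one cannot naively fold the $\eta$-integration and the $(y,z)$-integration into a single application of Lemma \ref{Lemma -abstract bounds of convolution integral}. The trick that makes everything fit is the order in which integrations are carried out: eliminating $\eta$ first through Corollary \ref{improved Aronson - integral} (at the cost of one $\bar\kappa(3/2,1)$), and only afterwards applying the abstract $L^\infty$--$\Vert\cdot\Vert_{\mathrm{diag}_1}$ bound on the remaining convolution (producing a second $\bar\kappa(3/2,1)$ and the factor $t\leq 1$). This pattern explains the square $\bar\kappa(3/2,1)^2$ appearing in the definition of $C_K$.
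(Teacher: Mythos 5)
Your proposal follows the paper's proof essentially verbatim: you invoke Lemma \ref{Lemma -abstract bounds of convolution integral} (with $q=3/2$, $\gamma=1$) for the initial-vorticity term, and for the forcing term you use the Markov property to split the two-time correlation, integrate out $\eta$ via Corollary \ref{improved Aronson - integral} (costing one $\bar\kappa(3/2,1)$), and then apply the lemma to the remaining $(y,z)$-integral, with $t\leq T_K\leq\min\{2\nu/C_K^2,1\}$ controlling both $\bar\kappa$ factors and the final time integration. The decomposition, the choice of $f$, and the bookkeeping matching the two summands of $C_K$ are all exactly as in the paper.
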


\begin{proof}
Note that for $G=0$ and $\delta=0$ this was proven in Qian and Yao \citep{QianYao2022},
we generalise this result using Lemma \ref{Lemma -abstract bounds of convolution integral}.
Since $\left|K_{\delta}(z)\right|\leq C_{0}\left|z\right|^{-1}$,
one can deduce by Lemma \ref{Lemma -abstract bounds of convolution integral}
with $q:=3/2$ that
\begin{align}
\begin{split}
\bigg|\int_{\mathbb{R}^{2}}\mathbb{E}\left[K_{\delta}\left(x-X^{b}(\eta,t)\right)\right]&\omega_{0}(\eta)\textrm{d}\eta\bigg|\leq C_{0}\int_{\mathbb{R}^{2}}\int_{\mathbb{R}^{2}}\frac{1}{\left|y-x\right|}p_{b}(0,z,t,y)\left|\omega_{0}(z)\right|\textrm{d}y\textrm{d}z\\
 & \leq C_{0}\bar{\kappa}\left(3/2,C_{K}\sqrt{t}/\sqrt{2\nu}\right)\left(2\pi\left\Vert \omega_{0}\right\Vert _{\infty}+\left\Vert \omega_{0}\right\Vert _{1}\right)
\\ &=C_{0}\bar{\kappa}\left(3/2,1\right)\left(2\pi\left\Vert \omega_{0}\right\Vert _{\infty}+\left\Vert \omega_{0}\right\Vert _{1}\right),
\end{split}
\label{homogeneous uniform bound}
\end{align}
where the last inequality is a consequence of $t\leq T_{K}$ and $\omega_{0}$ only depending on the initial condition.

Next we note that by the Markov property of the transition kernel
$p_{b}$ of $X^{b}$ 
\begin{align*}
 & \left|\int_{0}^{t}\int_{\mathbb{R}^{2}}\mathbb{E}\left[K_{\delta}\left(x-X^{b}(\eta,t)\right)G\left(X^{b}(\eta,s),s\right)\right]\chi_{R}(\eta)\textrm{d}\eta\textrm{d}s\right|\\
 & \leq\int_{0}^{t}\int_{\mathbb{R}^{2}}\mathbb{E}\left[\left|K_{\delta}\left(x-X^{b}(\eta,t)\right)\right|\left|G\left(X^{b}(\eta,s),s\right)\right|\right]|\chi_{R}(\eta)|\textrm{d}\eta\textrm{d}s\\
 & \leq\left\Vert \chi_{R}\right\Vert _{\infty}\int_{0}^{t}\int_{\mathbb{R}^{2}}\int_{\mathbb{R}^{2}}\int_{\mathbb{R}^{2}}\left|K_{\delta}\left(y-x\right)\right|p_{b}\left(s,z,t,y\right)\textrm{d}y \\ & \qquad\qquad\qquad\qquad\qquad\times\left|G\left(z,s\right)\right|p_{b}\left(0,\eta,s,z\right)\textrm{d}z\textrm{d}\eta\textrm{d}s.
\end{align*}

Then we note that only the term $p_{b}\left(0,\eta,s,\zeta\right)$
in the last series of integrals depends on $\eta$ and can thus be
integrated out. By Corollary \ref{improved Aronson - integral}, we
have 
\begin{align*}
\int_{\mathbb{R}^{2}}p_{b}\left(0,\eta,s,z\right)\mathrm{d}\eta & \leq\bar{\kappa}\left(3/2,C_{K}\sqrt{s}/\sqrt{2\nu}\right)\leq\bar{\kappa}\left(3/2,1\right).
\end{align*}

Thus we finally obtain by Lemma \ref{Lemma -abstract bounds of convolution integral}
and the fact that $s\leq t\leq T_{K}=\min\left\{ \frac{2\nu}{C_{K}(T)^{2}},1\right\} $
\begin{align*}
 \left|\int_{0}^{t}\int_{\mathbb{R}^{2}}\mathbb{E}\left[K_{\delta}\left(x-X^{b}(\eta,t)\right)G\left(X^{b}(\eta,s),s\right)\right]\chi_{R}(\eta)\textrm{d}\eta\textrm{d}s\right|&\\
 \leq C_{0}\bar{\kappa}\left(3/2,1\right)^{2}\int_{0}^{1}\left(2\pi\left\Vert G(\cdot,s)\right\Vert _{\infty}+\left\Vert G(\cdot,s)\right\Vert _{1}\right)\textrm{d}s&
\end{align*}

Since $t\leq T_{K}\leq T$, this lets us conclude that $\left|\left(\mathcal{K}\left(K_{\delta},\chi_{R},\omega_{0},G\right)\diamond b\right)(x,t)\right|\leq C_{K}$. 
\end{proof}
\begin{rem*}
\label{a priori bound and timeframe independent of delta and chi}
Note that the a priori bound $C_{K}$, and therefore also the time
frame $T_{K}$, do not depend on the regularisation parameter $\delta$,
nor the cut-off $R\geq0$. 
\end{rem*}
Next we show a contraction of $\mathcal{K}\left(K_{\delta},\chi_{R},\omega_{0},G\right)\diamond$
on small time frames.
\begin{thm}
\label{Theorem -  contraction estimate} Let $b,\tilde{b}$ be two
drift functions such that on the time-interval $\left[0,T_{K}\right]$,
one has $\left\Vert b\right\Vert _{\infty},\left\Vert \tilde{b}\right\Vert _{\infty}\leq C_{K}$. Then for some constant $C_{D}(T_{K},C_{K},\nu,G)$, depending on $C_{K}$,$T_{K}$,
$G$ and $\nu$, the following inequality holds 
\begin{align}
\begin{split} & \sup_{x\in\mathbb{R}^{2},r\leq t}\left|\left(\mathcal{K}\left(K_{\delta},\chi_{R},\omega_{0},G\right)\diamond b\right)(r,x)~-~\left(\mathcal{K}\left(K_{\delta},\chi_{R},\omega_{0},G\right)\diamond\tilde{b}\right)(r,x)\right|\\
 & \leq\left(C_{L}(1,T_{K},\omega_{0})+t~C_{D}(T_{K},C_{K},\nu,G)\right)\left(t+\sqrt{t}\right)\sup_{x\in\mathbb{R}^{2},r\leq t}\left|b(x,r)-\tilde{b}(x,r)\right|
\end{split}
\label{contraction estimate}
\end{align}
for any $t\leq T_{K}$. 
\end{thm}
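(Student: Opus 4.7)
I split the difference
\[
\bigl(\mathcal{K}\left(K_\delta,\chi_R,\omega_0,G\right)\diamond b\bigr)(r,x) - \bigl(\mathcal{K}\left(K_\delta,\chi_R,\omega_0,G\right)\diamond \tilde b\bigr)(r,x) = I(x,r) + J(x,r),
\]
where $I$ collects the $\omega_{0}$-contribution and $J$ the $\int_{0}^{r}(\cdots)\mathrm{d}s$ forcing contribution. The initial piece $I$ fits exactly the framework of Theorem \ref{Theorem -  abstract Lipschitz estimate}: take $\Theta(x,y)=K_{\delta}(x-y)$, $\gamma=1$, $C_{\Theta}=C_{0}$, and $f(y,z)=\omega_{0}(z)$ depending only on $z$. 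By the remark following that theorem, $\|\omega_{0}\|_{\mathrm{diag}_{1}}=\vertiii{\omega_{0}}_{T_{K}}=\|\omega_{0}\|_{1}<\infty$, so its hypotheses are satisfied. Using $r\leq t\leq T_{K}$ (hence $C_{K}\sqrt{r}/\sqrt{2\nu}\leq 1$) and the monotonicity of $\bar{\kappa}$ and $C_{L}$ in their first two arguments, estimate \eqref{abstract local Lipschitz estimate} immediately yields
\[
\sup_{x\in\mathbb{R}^{2},\,r\leq t}\left|I(x,r)\right|\leq C_{L}(1,T_{K},\omega_{0})\,(t+\sqrt{t})\,\|b-\tilde b\|_{\infty}.
\]

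For the forcing piece $J$, Theorem \ref{Theorem -  abstract Lipschitz estimate} does not apply verbatim because its integrand couples two different times: $K_{\delta}(x-X^{b}(\eta,r))$ is evaluated at the terminal time $r$ while $G(X^{b}(\eta,s),s)$ is evaluated at the intermediate time $s<r$. My plan is to rerun the Cameron--Martin argument that powers that proof. Fix $s\in[0,r]$ and use the Cameron--Martin remark to rewrite the $s$-slice of $J$ against the Wiener measure as
\[
\int \chi_{R}(\eta)\,\mathbb{E}\!\left[K_{\delta}(x-\eta-\sqrt{2\nu}B_{r})\,G(\eta+\sqrt{2\nu}B_{s},s)\bigl(R_{b}(\eta,r)-R_{\tilde b}(\eta,r)\bigr)\right]\mathrm{d}\eta.
\]
Expanding $R_{b}-R_{\tilde b}=\int_{0}^{1}R_{\lambda}(N_{b}-N_{\tilde b})\,\mathrm{d}\lambda$, using $R_{\lambda}\leq R_{b_{\lambda}}$ by \eqref{inequality - convex estimate Girsanov density}, and writing $N_{b}-N_{\tilde b}=M+A$ as the sum of a martingale and a finite-variation part, reduces the estimate to two contributions.

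The $|A|$-contribution follows the template of \eqref{Lipschitz proof - finite variation part}: the deterministic bound \eqref{estimate - finite variation part} factors out $C_{K}r/(4\nu)\|b-\tilde b\|_{\infty}$; reverting through Cameron--Martin and performing the $\eta$-integral first using Corollary \ref{improved Aronson - integral} gives $\int\chi_{R}(\eta)p_{b_{\lambda}}(0,\eta,s,z)\mathrm{d}\eta\leq\bar{\kappa}(q,1)$ uniformly in $z$; the residual $(z,y)$-integral is then dominated via Lemma \ref{Lemma -abstract bounds of convolution integral} with $f(y,z)=G(z,s)$, $\Theta=K_{\delta}$, $\gamma=1$. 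The $|M|$-contribution is treated by H{\"o}lder's inequality on the finite product measure $\chi_{R}(\eta)\mathrm{d}\eta\otimes\mathrm{d}\mathbb{P}$ with exponents $\alpha\in(1,2)$, $\beta=\alpha/(\alpha-1)$: BDG bounds the martingale factor by $O\bigl(\sqrt{r}/\sqrt{\nu}\,\|b-\tilde b\|_{\infty}\bigr)$ as in \eqref{Lipschitz proof - martingale part 2}, while the complementary factor is controlled by $R_{b_{\lambda}}^{\alpha}\leq R_{\alpha b_{\lambda}}\exp\bigl((\alpha-1)\alpha C_{K}^{2}r/(4\nu)\bigr)$, Cameron--Martin, and Lemma \ref{Lemma -abstract bounds of convolution integral} with $\gamma=\alpha$ (admissible since $\alpha<2$).

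Combining the two contributions at each $s$ and integrating over $s\in[0,r]$ produces an extra factor of $r\leq t$, yielding a forcing bound of the form $t\,C_{D}(T_{K},C_{K},\nu,G)(\sqrt{t}+t)\,\|b-\tilde b\|_{\infty}$ for a suitable constant $C_{D}$ packaging together all constants depending on $T_{K},C_{K},\nu,G$ (and indirectly on $C_{0}$ and $\chi_{R}$). Summing this with the bound on $I$ gives \eqref{contraction estimate}. The main obstacle is precisely this two-time coupling: the Cameron--Martin/Girsanov technique of Theorem \ref{Theorem -  abstract Lipschitz estimate} has to be re-executed for a test function that depends on both the starting point $\eta$ and the Brownian path at the intermediate time $s$, while throughout one must ensure the H{\"o}lder step preserves the integrability threshold $\alpha\gamma<2$ that powers Lemma \ref{Lemma -abstract bounds of convolution integral}.
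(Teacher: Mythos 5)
Your argument for the homogeneous ($\omega_{0}$) piece coincides with the paper's and is correct. For the forcing piece, however, you take a genuinely different route: the paper does not re-run the Cameron--Martin argument. Instead it uses the Markov property to factor the two-time kernel as $\int K_\delta(x-y)\,p_b(s,z,t,y)\,\mathrm dy\;G(z,s)\,\int p_b(0,\eta,s,z)\chi_R(\eta)\,\mathrm d\eta$, telescopes the difference $p_b-p_{\tilde b}$ into two single-time differences, and feeds each of them through the already-established Theorem~\ref{Theorem - abstract Lipschitz estimate} --- one with $f(y,z)=w_s(z)$ where $w_s$ absorbs the $\chi_R$-marginal, and one with the composite kernel $\Theta(x,z)=\int K_\delta(x-y)p_{\tilde b}(s,z,t,y)\,\mathrm dy$ (shown to obey $|\Theta(x,z)|\lesssim|x-z|^{-1}$). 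That design deliberately avoids any new Girsanov manipulation and is the structural reason the paper proved Theorem~\ref{Theorem - abstract Lipschitz estimate} in the abstract form it has. Your from-scratch Girsanov argument on the coupled integrand is a legitimate alternative, but it is heavier, and more importantly it has a concrete gap as written.

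The gap is in your H\"older step for the $|M|$-contribution. You propose H\"older over the base measure $\chi_R(\eta)\,\mathrm d\eta\otimes\mathrm d\mathbb P$, calling it ``finite''. It is finite, but its mass is $\int\chi_R\,\mathrm d\eta=O((R+2)^2)$, so the martingale factor becomes
\[
\left(\int_{\mathbb R^2}\mathbb E\bigl[|M(\eta,r)|^{\beta}\bigr]\chi_R(\eta)\,\mathrm d\eta\right)^{1/\beta}
\;\lesssim\;\frac{\sqrt r}{\sqrt\nu}\,\|b-\tilde b\|_\infty\cdot(R+2)^{2/\beta},
\]
and the resulting ``constant'' depends on $R$, which is excluded by the statement $C_D=C_D(T_K,C_K,\nu,G)$ (and would wreck the later use of the contraction uniformly as $R\to\infty$ in Lemma~\ref{Lemma - robustness in delta and R}). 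The fix is to follow the template of~\eqref{Lipschitz proof - martingale part 2} faithfully: in that estimate the decaying weight $|f|$ appears in \emph{both} H\"older factors, i.e.\ the H\"older base must be $|G(\eta+\sqrt{2\nu}B_s,s)|\,\chi_R(\eta)\,\mathrm d\eta\otimes\mathrm d\mathbb P$, not $\chi_R(\eta)\,\mathrm d\eta\otimes\mathrm d\mathbb P$. Then the martingale factor is $\bigl(\int_\eta\mathbb E[\,|G(\eta+\sqrt{2\nu}B_s,s)|\,|M(\eta,r)|^\beta\,]\chi_R\,\mathrm d\eta\bigr)^{1/\beta}$, which after Cauchy--Schwarz in $\omega$ and BDG is bounded by $\frac{\sqrt r}{\sqrt\nu}\|b-\tilde b\|_\infty$ times an $R$-free quantity of $\vertiii{G(\cdot,s)}$-type, while the complementary factor keeps $|G|^{1}$ and $|K_\delta|^{\alpha}$ together and is finite and $R$-free via Cameron--Martin, the Markov factorization, Corollary~\ref{improved Aronson - integral} (to kill the $\eta$-integral against $\chi_R\leq1$), and Lemma~\ref{Lemma -abstract bounds of convolution integral} with $\gamma=\alpha<2$. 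With this correction your approach goes through and is $R$-uniform, as required.
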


\begin{proof}
For the homogeneous part we note that by Lemma \ref{Lemma -abstract bounds of convolution integral}
and Remark \ref{Remark - admissible f} we have 
\begin{align*}
\sup_{x\in\mathbb{R}^{2}} \left|\int_{\mathbb{R}^{2}}\mathbb{E}\left[K_{\delta}\left(x-X^{b}(\eta,t)\right)\right]\omega_{0}(\eta)\textrm{d}\eta-\int_{\mathbb{R}^{2}}\mathbb{E}\left[K_{\delta}\left(x-X^{\tilde{b}}(\eta,t)\right)\right]\omega_{0}(\eta)\textrm{d}\eta\right|&\\
 \leq C_{L}(1,T_{K},\omega_{0})\left(t+\sqrt{t}\right)\left\Vert b-\tilde{b}\right\Vert _{\infty}&,
\end{align*}
with $C_{\theta}=C_{0}$.

Next we need to investigate the error in the inhomogeneity 
\begin{align*}
 & \left|\int_{0}^{t}\int_{\mathbb{R}^{2}}\biggl(\mathbb{E}\left[K_{\delta}\left(x-X^{b}(\eta,t)\right)G\left(X^{b}(\eta,s),s\right)\right]\right.\\
 & \left.-\mathbb{E}\left[K_{\delta}\left(x-X^{\tilde{b}}(\eta,t)\right)G\left(X^{\tilde{b}}(\eta,s),s\right)\right]\biggl)\chi_{R}(\eta)\textrm{d}\eta\textrm{d}s\right|
\end{align*}

Here the key difference compared to the homogeneous term is the quadratic
dependence on the diffusion process (i.e. dependence on time correlations). 

Now we note that by the Markov property 
\begin{align*}
 & \left|\int_{\mathbb{R}^{2}}\biggl(\mathbb{E}\left[K_{\delta}\left(x-X^{b}(\eta,t)\right)G\left(X^{b}(\eta,s),s\right)\right]\right.\\
 & \qquad\left.-\mathbb{E}\left[K_{\delta}\left(x-X^{\tilde{b}}(\eta,t)\right)G\left(X^{\tilde{b}}(\eta,s),s\right)\right]\biggl)\chi_{R}(\eta)\textrm{d}\eta\right|\\
 & =\left|\int_{\mathbb{R}^{2}}\int_{\mathbb{R}^{2}}K_{\delta}\left(x-y\right)p_{b}(s,z,t,y)\mathrm{d}y~G(z,s) \int_{\mathbb{R}^{2}}p_{b}(0,\eta,s,z)\chi_{R}(\eta)\mathrm{d}\eta\mathrm{d}z\right.\\
 & \qquad-\left.\int_{\mathbb{R}^{2}}\int_{\mathbb{R}^{2}}K_{\delta}\left(x-y\right)p_{\tilde{b}}(s,z,t,y)\mathrm{d}y~G(z,s) \int_{\mathbb{R}^{2}}p_{\tilde{b}}(0,\eta,s,z)\chi_{R}(\eta)\mathrm{d}\eta\mathrm{d}z\right|
\end{align*}
which we bound above by the following two terms
\begin{equation*}
 \left|\int_{\mathbb{R}^{2}}\int_{\mathbb{R}^{2}}K_{\delta}\left(x-y\right)\left(p_{b}(s,z,t,y)-p_{\tilde{b}}(s,z,t,y)\right)\mathrm{d}y \right. \left. G(z,s)\int_{\mathbb{R}^{2}}p_{b}(0,\eta,s,z)\chi_{R}(\eta)\mathrm{d}\eta\mathrm{d}z\right|,
\end{equation*}
and
\begin{equation*}
\left|\int_{\mathbb{R}^{2}}\int_{\mathbb{R}^{2}}K_{\delta}\left(x-y\right)p_{\tilde{b}}(s,z,t,y)\mathrm{d}y~G(z,s) \int_{\mathbb{R}^{2}}\left(p_{b}(0,\eta,s,z)-p_{\tilde{b}}(0,\eta,s,z)\right)\chi_{R}(\eta)\mathrm{d}\eta\mathrm{d}z\right|,
\end{equation*}
both of which can be estimated using Lemma \ref{Lemma -abstract bounds of convolution integral}. 

To estimate the first term we define 
\begin{align*}
w_{s}(z):=G(z,s)~\int_{\mathbb{R}^{2}}p_{b}(0,\eta,s,z)\chi_{R}(\eta)\mathrm{d}\eta.
\end{align*}
The supremum norm of $w$ can be estimated just as in the proof of
Lemma \ref{Lemma - boundedness proof} using the improved Aronson
estimates \eqref{improved Aronson estimates} to derive 
\begin{align*}
\left\Vert w_{s}\right\Vert _{\infty}\leq\left\Vert G(\cdot,s)\right\Vert _{\infty}\bar{\kappa}(q,C_{K}\sqrt{s}).
\end{align*}
Next we note that by $\chi_{R}\leq1$, the improved Aronson estimate
\eqref{improved Aronson estimates}, Corollary \ref{improved Aronson - integral}
\begin{align*}
\left\Vert w_{s}\right\Vert _{1} & \leq\int_{\mathbb{R}^{2}}\left|G(z,s)\right|~\int_{\mathbb{R}^{2}}p_{b}(0,\eta,s,z)\chi_{R}(\eta)\mathrm{d}\eta\mathrm{d}z\\ 
& \leq\int_{\mathbb{R}^{2}}\int_{\mathbb{R}^{2}}\left|G(\eta-z,s)\right|~p_{b}(0,\eta,s,\eta-z)\mathrm{d}\eta\mathrm{d}z\\
 & \leq\int_{\mathbb{R}^{2}}\int_{\mathbb{R}^{2}}\left|G(\eta-z,s)\right|~\frac{e^{-\frac{|z|^{2}}{2s}}}{2\pi s}\left(1+\kappa(q)C_{K}\left(\sqrt{s}+\left|z\right|\right)e^{\frac{q-1}{2qs}|z|^{2}+\frac{C_{K}^{2}s}{2(q-1)}}\right)\mathrm{d}\eta\mathrm{d}z\\
 & =\left\Vert G(\cdot,s)\right\Vert _{1}\int_{\mathbb{R}^{2}}\frac{e^{-\frac{|z|^{2}}{2s}}}{2\pi s}\left(1+\kappa(q)\frac{C_{K}}{\sqrt{2\nu}}\left(\sqrt{s}+\left|z\right|\right)e^{\frac{q-1}{2qs}|z|^{2}+\frac{C_{K}^{2}s}{4(q-1)\nu}}\right)\mathrm{d}z,
\end{align*}
which implies that
\begin{equation*}
\left\Vert w_{s}\right\Vert _{1} \leq \left\Vert G(\cdot,s)\right\Vert_{1}\bar{\kappa}\left(q,\frac{C_{K}\sqrt{s}}{\sqrt{2\nu}}\right).
\end{equation*}

Thus, by Remark \ref{Remark - admissible f}, $f(y,z):=w_{s}(z)$
is an admissible choice and we can indeed use Lemma \ref{Lemma -abstract bounds of convolution integral}
to conclude that for some constant $C_{L1}\geq\sup_{s\leq T_{K}}C_{L}(1,T_{K},w_{s})$,
which depends on $T_{K}$ and $G$, the following inequality holds
\begin{align*}
\begin{split} \left|\int_{\mathbb{R}^{2}}~\int_{\mathbb{R}^{2}}K_{\delta}\left(x-y\right)\left(p_{b}(s,z,t,y)-p_{\tilde{b}}(s,z,t,y)\right)\mathrm{d}y~G(z,s)\int_{\mathbb{R}^{2}}p_{b}(0,\eta,s,z)\chi_{R}(\eta)\mathrm{d}\eta~\mathrm{d}z\right|&\\
 \leq C_{L1}\left(\sqrt{t-s}+(t-s)\right)\left\Vert b-\tilde{b}\right\Vert _{\infty}&.
\end{split}
\end{align*}

Next we define, for fixed $s$ and $t$, the kernel $\Theta:\mathbb{R}^{2}\times\mathbb{R}^{2}\to\mathbb{R}^{2}$ by 
\begin{align}
\Theta(x,z):=\int_{\mathbb{R}^{2}}K_{\delta}\left(x-y\right)p_{\tilde{b}}(s,z,t,y)\mathrm{d}y.\label{definition theta}
\end{align}
To use Theorem \ref{Theorem - abstract Lipschitz estimate}, we have
to show that it satisfies condition \eqref{kernel condition - in Theorem}.
To this end we note that if we set $r:=\frac{\left|z-x\right|}{2}$,
then by the generic Aronson estimate \eqref{rough Aronson estimate}
we have 
\begin{align*}
\left|\Theta(x,z)\right| & \leq\int_{\mathbb{R}^{2}}\left|K_{\delta}\left(x-y\right)\right|p_{\tilde{b}}(s,z,t,y)\mathrm{d}y\\
 & \leq C_{0}\int_{\left\{ \left|y-x\right|\leq r\right\} }\left|y-x\right|^{-1}p_{\tilde{b}}(s,z,t,y)\mathrm{d}y+C_{0}\int_{\left\{ \left|y-x\right|>r\right\} }\left|y-x\right|^{-1}p_{\tilde{b}}(s,z,t,y)\mathrm{d}y\\
 & \leq C_{0}\int_{\left\{ \left|y-x\right|\leq r\right\} }\left|y-x\right|^{-1}\frac{M(C_{K},T,\nu)}{2\pi(t-s)}e^{-\frac{|y-z|^{2}}{M(C_{K},T,\nu)(t-s)}}\mathrm{d}y+C_{0}r^{-1}.
\end{align*}

Next we note that by multiplying with $1/M(C_{K},T,\nu)^{2}$, switching
to polar coordinates and $|z-x|=2r$ 
\begin{align*}
 & \int_{\left\{ \left|y-x\right|\leq r\right\} }\left|y-x\right|^{-1}\frac{e^{-\frac{|y-z|^{2}}{M(C_{K},T,\nu)(t-s)}}}{2\pi(t-s)M(C_{K},T,\nu)}\mathrm{d}y \\
 &\leq\int_{0}^{r}\frac{e^{-\frac{(2r-\lambda)^{2}}{M(C_{K},T,\nu)(t-s)}}}{(t-s)M(C_{K},T,\nu)}\mathrm{d}\lambda\leq r\frac{e^{-\frac{r^{2}}{M(C_{K},T,\nu)(t-s)}}}{(t-s)M(C_{K},T,\nu)}.
\end{align*}

Finally we set $C_{G}:=\sup_{x\in(0,\infty)}\frac{x}{e^{x}}$.
Then the previous calculations let us conclude 
\begin{align*}
\left|\Theta(x,z)\right|\leq2^{-1}\left(M(C_{K},T,\nu)^{2}C_{G}+1\right)|x-z|^{-1},
\end{align*}
which implies that we can indeed use Theorem \ref{Theorem - abstract Lipschitz estimate}.
Therefore we know that for some constant $C_{L2}\geq\sup_{s\leq T_{K}}C_{L}(1,T,G(\cdot,s))$,
depending on $C_{K}$,$T_{K}$, $G$ and $\nu$, we have 
\begin{align*}
 \left|\int_{\mathbb{R}^{2}}~\int_{\mathbb{R}^{2}}K_{\delta}\left(x-y\right)p_{\tilde{b}}(s,z,t,y)\mathrm{d}y~G(z,s)~\int_{\mathbb{R}^{2}}\left(p_{b}(0,\eta,s,z)-p_{\tilde{b}}(0,\eta,s,z)\right)\chi_{R}(\eta)\mathrm{d}\eta~\mathrm{d}z\right| &\\
 \leq C_{L2}\left(\sqrt{t-s}+(t-s)\right)\left\Vert b-\tilde{b}\right\Vert _{\infty}&. 
\end{align*}

Therefore there exists some constant $C_{D}(T_{K},C_{K},\nu,G)$ such
that \eqref{contraction estimate} holds. 
\end{proof}
The theorem we proved shows the contractivity for small timeframes, which in turn gives us the short-time well-posedness of the mean-field problem
\eqref{regularised mf equation}, \eqref{mean field form of drift - regularised}.
\begin{cor}
\label{Corollary - small time well-posedness} Let $T\leq T_{K}$
be such that 
\begin{align*}
\left(C_{L}(1,T_{K},\omega_{0})+TC_{D}(T_{K},C_{K},\nu,G)\right)\left(T+\sqrt{T}\right)<1,
\end{align*}
then there exists a unique bounded vector field $u_{\delta,R}\in L^{\infty}\left(\mathbb{R}^{2}\times[0,T];\mathbb{R}^{2}\right)$
such that $\mathcal{K}\left(K_{\delta},\chi_{R},\omega_{0},G\right)\diamond u_{\delta,R}=u_{\delta,R}$.
Furthermore on $[0,T]$, there exists a unique strong solution $X_{\delta,R}$
to the McKean--Vlasov equation \eqref{regularised mf equation}. 
\end{cor}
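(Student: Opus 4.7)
The plan is a Banach fixed-point argument on the closed ball
$$\mathcal{B}_{T} := \bigl\{ b \in L^{\infty}(\mathbb{R}^{2}\times[0,T];\mathbb{R}^{2}) : \|b\|_{\infty}\leq C_{K}\bigr\},$$
which, as a closed subset of the Banach space $L^{\infty}(\mathbb{R}^{2}\times[0,T];\mathbb{R}^{2})$, is a complete metric space under the supremum distance. The two inputs needed are already in hand. First, Lemma \ref{Lemma - boundedness proof} shows that the operator $\mathcal{K}:=\mathcal{K}(K_{\delta},\chi_{R},\omega_{0},G)\diamond$ maps $\mathcal{B}_{T}$ into itself, using that $T\leq T_{K}$ and that the bound $C_{K}$ is independent of $\delta$ and $R$. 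Second, Theorem \ref{Theorem -  contraction estimate} yields, for every $b,\tilde{b}\in\mathcal{B}_{T}$,
$$\sup_{x\in\mathbb{R}^{2},\,r\leq T}\bigl|\mathcal{K} b(x,r)-\mathcal{K}\tilde{b}(x,r)\bigr|\leq\bigl(C_{L}(1,T_{K},\omega_{0})+T\,C_{D}(T_{K},C_{K},\nu,G)\bigr)(T+\sqrt{T})\,\|b-\tilde{b}\|_{\infty},$$
with a prefactor strictly less than one by the standing hypothesis on $T$. The Banach fixed-point theorem then produces a unique $u_{\delta,R}\in\mathcal{B}_{T}$ with $\mathcal{K} u_{\delta,R}=u_{\delta,R}$; uniqueness among all $L^{\infty}$ fixed points is obtained by the same argument after observing, via a re-run of the estimates in Lemma \ref{Lemma - boundedness proof} with $C_{K}$ replaced by an arbitrary a priori bound, that any bounded fixed point already lies in $\mathcal{B}_{T}$.

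Having obtained the drift $u_{\delta,R}$, the next step is to promote it into a strong solution of \eqref{regularised mf equation}. To this end I would verify that $u_{\delta,R}$ is Lipschitz in $x$: differentiating the identity $u_{\delta,R}=\mathcal{K} u_{\delta,R}$ under the expectation is legitimate because $\nabla K_{\delta}$ is bounded (Assumption 2, first bullet), $\omega_{0}$ is compactly supported with $\|\omega_{0}\|_{1}<\infty$ (Assumption 1), $\chi_{R}$ is compactly supported and bounded (Assumption 3), and $G(\cdot,s)$ is smooth with compact support (Assumption 1). Together with the Aronson bounds already used throughout Section \ref{section mean field analysis}, this yields a finite bound on $\|\nabla_{x}u_{\delta,R}\|_{L^{\infty}([0,T]\times\mathbb{R}^{2})}$ (depending on $\delta,R,T$). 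Standard Picard iteration for Lipschitz SDEs then gives strong existence and pathwise uniqueness of $X_{\delta,R}$ on $[0,T]$. For the uniqueness clause of the coupled McKean--Vlasov problem, if $(\tilde{X}_{\delta,R},\tilde{u}_{\delta,R})$ is any other such solution with $\tilde{u}_{\delta,R}$ bounded, then $\tilde{u}_{\delta,R}$ is itself a fixed point of $\mathcal{K}$, hence $\tilde{u}_{\delta,R}=u_{\delta,R}$ by the uniqueness above, and pathwise uniqueness of the SDE with drift $u_{\delta,R}$ then forces $\tilde{X}_{\delta,R}=X_{\delta,R}$.

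I do not expect a serious obstacle here: once Lemma \ref{Lemma - boundedness proof} and Theorem \ref{Theorem -  contraction estimate} are in place, the argument is a routine Banach fixed-point assembly. The only point that deserves modest care is the Lipschitz regularity of $u_{\delta,R}$ in $x$, since its Lipschitz constant depends badly on $\delta$ (through $\|\nabla K_{\delta}\|_{\infty}$) and on $R$ (through the support of the cut-off $\chi_{R}$). This dependence is precisely the reason the corollary establishes well-posedness only at the regularised and localised level, while the uniform-in-$(\delta,R)$ limit behaviour is addressed separately in the remainder of Section \ref{section mean field analysis} and in Section \ref{section convergence}.
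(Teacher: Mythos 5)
Your fixed-point assembly --- invariance of the ball $\{\|b\|_\infty \le C_K\}$ via Lemma \ref{Lemma - boundedness proof}, contractivity via Theorem \ref{Theorem -  contraction estimate}, then Banach's theorem --- is precisely the paper's argument. Where you diverge is in producing the strong solution and in justifying that the operator $\mathcal{K}\left(K_{\delta},\chi_{R},\omega_{0},G\right)\diamond$ acts on all of $L^\infty$: the paper invokes Veretennikov's theorem \citep{Veretennikov}, which gives strong existence and pathwise uniqueness for SDEs with merely bounded measurable drift and additive Brownian noise, and thus covers any $b\in L^\infty$ with no Lipschitz regularity whatsoever. You instead derive a $\delta,R$-dependent Lipschitz bound on $u_{\delta,R}$ by differentiating the fixed-point identity (legitimate here since $\nabla K_\delta$ is bounded and $\omega_0,\chi_R$ are compactly supported, cf.\ the bound \eqref{bound for derivatives of uDelta}) and then run Picard iteration. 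Both routes reach the same conclusion; yours imports a regularity step that the paper establishes independently later (Lemma \ref{Lemma - Lipschitz continuity}), while Veretennikov sidesteps it entirely. One caveat worth flagging: your remark that re-running Lemma \ref{Lemma - boundedness proof} with an arbitrary a priori bound forces any bounded fixed point into $\{\|b\|_\infty \le C_K\}$ does not go through as stated, since both the factor $\bar{\kappa}$ and the admissible time window in that lemma depend on the a priori bound --- enlarging the bound shrinks the window, and propagating the improved $C_K$-bound from that shorter window to all of $[0,T]$ needs a bootstrap you have not supplied. The paper's own proof is no more explicit on this point, so you are not worse off, but the proposed justification should not be regarded as complete.
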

\begin{proof}
By Veretennikov \citep{Veretennikov}, for any $b\in L^{\infty}\left(\mathbb{R}^{2}\times[0,T];\mathbb{R}^{2}\right)$,
there is a unique strong solution of the SDE 
\begin{align*}
\mathrm{d}X=b(X(t),t)\mathrm{d}t+\sqrt{2\nu}\mathrm{d}B_{t},
\end{align*}
thus $\mathcal{K}\left(K_{\delta},\chi_{R},\omega_{0},G\right)$ is
well defined on $L^{\infty}\left(\mathbb{R}^{2}\times[0,T];\mathbb{R}^{2}\right)$.
Combining Lemma \ref{Lemma - boundedness proof} and Theorem \ref{Theorem -  contraction estimate},
shows that $\mathcal{K}\left(K_{\delta},\chi_{R},\omega_{0},G\right)$
is even a contraction and thus by the Banach fixed point theorem,
there is a unique fixed point $u_{\delta,R}$. 
\end{proof}
\begin{rem*}
\label{Remark - a priori bounds velocity} Note that the bound 
\begin{align*}
\sup_{x\in\mathbb{R}^{2},t\leq T_{K}}\left|u_{\delta}(x,t)\right|\leq C_{K}
\end{align*}
holds for all $\delta\geq0$ and all $R\geq0$. This is in particular
true for the fluid velocity $u$! 
\end{rem*}
Now that we have proven the well-posedness of \eqref{regularised mf equation}, it remains to show that the true velocity $u$ can be approximated by the the regularised velocity $u_{\delta,R}$, which we do in the following lemma.
\begin{lem}
\label{Lemma - robustness in delta and R} There exists some sufficiently
small timeframe $T>0$ and a constant $\tilde{C}(T)>0$, only depending
on $\omega_{0},G$ and $T$, such that 
\begin{align}
\sup_{t\leq T,x\in\mathbb{R}^{2}}\left|u_{\delta,R}(x,t)-u(x,t)\right|\leq2\tilde{C}(T)\left(\delta+\frac{1}{R}\right).\label{robustness in delta and R - estimate}
\end{align}
\end{lem}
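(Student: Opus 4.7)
The plan is a standard stability-plus-consistency argument, leveraging the fact that both $u$ and $u_{\delta,R}$ are fixed points of operators of the same form: $u=\mathcal{K}(K,1,\omega_{0},G)\diamond u$ and $u_{\delta,R}=\mathcal{K}(K_{\delta},\chi_{R},\omega_{0},G)\diamond u_{\delta,R}$. Writing $e:=u_{\delta,R}-u$, I insert the hybrid $\mathcal{K}(K_{\delta},\chi_{R},\omega_{0},G)\diamond u$ to decompose $e$ into a stability piece $\mathcal{K}(K_{\delta},\chi_{R},\omega_{0},G)\diamond u_{\delta,R}-\mathcal{K}(K_{\delta},\chi_{R},\omega_{0},G)\diamond u$ and a consistency piece $\mathcal{K}(K_{\delta},\chi_{R},\omega_{0},G)\diamond u-\mathcal{K}(K,1,\omega_{0},G)\diamond u$. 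By Lemma~\ref{Lemma - boundedness proof}, both fixed points satisfy $\|u\|_{\infty},\|u_{\delta,R}\|_{\infty}\le C_{K}$ on $[0,T_{K}]$ uniformly in $\delta$ and $R$, so Theorem~\ref{Theorem -  contraction estimate} applied to the stability piece yields the Lipschitz bound $\bigl(C_{L}(1,T_{K},\omega_{0})+tC_{D}\bigr)(t+\sqrt{t})\sup_{s\le t}\|e(\cdot,s)\|_{\infty}$. I then choose $T\in(0,T_{K}]$ small enough that this prefactor is at most $\tfrac{1}{2}$ on $[0,T]$, so that the stability contribution can be absorbed into the left-hand side.

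For the consistency piece I use the algebraic identity $K_{\delta}\chi_{R}-K=(K_{\delta}-K)\chi_{R}+K(\chi_{R}-1)$ (noting that the initial-data term carries no $\chi_{R}$) to split it into three integrals: an $\omega_{0}$-term with kernel $K_{\delta}-K$, a $G$-term with kernel $(K_{\delta}-K)\chi_{R}$, and a $G$-term with kernel $K(\chi_{R}-1)$. The first two encode the kernel-regularisation error; invoking the bound $|K_{\delta}(z)-K(z)|\le\epsilon_{\delta}(z)/|z|$ from Assumption~2 and applying Lemma~\ref{Lemma -abstract bounds of convolution integral} with $\Theta=K_{\delta}-K$, $\gamma=1$, and $f$ equal to $\omega_{0}$ or $G(\cdot,s)$ (after using the Markov property and Corollary~\ref{improved Aronson - integral} to integrate out $\eta$ against $\chi_{R}\le 1$ in the second term), I bound each by a constant $C(\omega_{0},G,T)$ times $\int_{|z|\le 1}\epsilon_{\delta}(z)/|z|\,\mathrm{d}z+\sup_{|z|\ge 1}\epsilon_{\delta}(z)/|z|$, which by the last bullet of Assumption~2 is at most $C_{\mathrm{reg}}\delta$. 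Here the $\|\cdot\|_{\mathrm{diag}_{1}}$-seminorm reduces to the $L^{1}$-norm because $f$ depends on only one of its two arguments, and both $\|\omega_{0}\|_{1}$ and $\|G(\cdot,s)\|_{1}$ are finite by Assumption~1.

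For the third integral I use localisation: $\chi_{R}-1$ vanishes on $\{|\eta|<R\}$ and $G(\cdot,s)$ is supported in $\{|z|\le R_{0}\}$ by Assumption~1. Using the Markov property to factor the expectation through $\int_{|\eta|\ge R}p_{u}(0,\eta,s,z)\,\mathrm{d}\eta$, and then the reflection identity \eqref{reflection formula for transition densities} together with the improved Aronson estimate \eqref{improved Aronson estimates}, I show that for $R\ge 2R_{0}$ and $|z|\le R_{0}$ this tail integral is of order $\exp(-cR^{2}/T)$, which is $o(1/R)$. A final application of Lemma~\ref{Lemma -abstract bounds of convolution integral} with $\Theta=K$, $\gamma=1$, and $f(y,z)=G(z,s)\cdot\bigl[\int_{|\eta|\ge R}p_{u}(0,\eta,s,z)\,\mathrm{d}\eta\bigr]$ handles the remaining $y$-integration against the singular Biot--Savart kernel and produces a bound of order $\tilde{C}(T)/R$. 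Combining everything gives $\|e\|_{\infty,[0,T]}\le\tfrac{1}{2}\|e\|_{\infty,[0,T]}+\tilde{C}(T)(\delta+1/R)$, and absorbing yields \eqref{robustness in delta and R - estimate}. The main technical obstacle is this third piece: one must verify that the $\|\cdot\|_{\infty}$ and $\|\cdot\|_{\mathrm{diag}_{1}}$ seminorms of the composite $f$ actually inherit the expected $R$-decay, which is a matter of careful bookkeeping involving two successive convolutions against transition densities and hinges on the compact supports guaranteed by Assumption~1.
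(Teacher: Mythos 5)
Your argument follows the paper's own proof almost exactly: the stability-plus-consistency decomposition through the hybrid $\mathcal{K}(K_{\delta},\chi_{R},\omega_{0},G)\diamond u$, absorbing the stability piece for small $T$ via Theorem~\ref{Theorem -  contraction estimate} and the uniform a priori bound from Lemma~\ref{Lemma - boundedness proof}, and splitting the consistency piece into a kernel-regularisation error (controlled by $\epsilon_{\delta}$, Assumption~2, and Lemma~\ref{Lemma -abstract bounds of convolution integral}, yielding $O(\delta)$) and a localisation error are all identical to the paper. The one place you depart is in bounding the localisation tail $\int_{|\eta|\ge R}p_u(0,\eta,s,z)\,\mathrm{d}\eta$: you exploit the Gaussian decay of the Aronson estimate directly, obtaining $O(e^{-cR^{2}/T})$ for $R\ge 2R_{0}$ and $|z|\le R_{0}$, whereas the paper applies the basic Aronson estimate \eqref{rough Aronson estimate} followed by Markov's inequality on a $\mathcal{N}(0,M(C_{K},s,\nu)s\,\mathrm{I})$ variable, yielding the uniform-in-$R$ bound $v_{s,R}(z)\le M^{2}|G(z,s)|\bigl(\sqrt{\pi M s/2}+|z|\bigr)/R$ and then absorbing the extra $|z|$ through the weighted constant $C_{G}(t)$ of \eqref{Assumption first moment of G}. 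Your version is sharper and spares you the first-moment weight on $G$, at the cost that it strictly covers only $R\ge 2R_{0}$; the remaining range $R<2R_{0}$ is trivial because $\|u_{\delta,R}-u\|_{\infty}\le 2C_{K}$ always holds (Remark following Corollary~\ref{Corollary - small time well-posedness}), but you should say so explicitly. Apart from this local choice, the two arguments coincide.
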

\begin{proof}
As noted in Remark \ref{a priori bound and timeframe independent of delta and chi},
both $T_{K}$ and $C_{K}$ do not depend on $\delta$ or $R$. Let
the regularisation parameter $\delta\geq0$ and the cutoff threshold
$R\in[0,+\infty]$ be arbitrary, but fixed. Then we note that 
\begin{align*}
 \left|u_{\delta,R}(x,t)-u(x,t)\right|&=\left|\mathcal{K}\left(K_{\delta},\chi_{R},\omega_{0},G\right)\diamond u_{\delta,\chi}(x,t)-\mathcal{K}\left(K_{0},\chi_{\infty},\omega_{0},G\right)\diamond u(x,t)\right|\\
 & \leq\left|\mathcal{K}\left(K_{\delta},\chi_{R},\omega_{0},G\right)\diamond u_{\delta,R}(x,t)-\mathcal{K}\left(K_{\delta},\chi_{R},\omega_{0},G\right)\diamond u(x,t)\right|\\
 & \;+\left|\mathcal{K}\left(K_{\delta},\chi_{R},\omega_{0},G\right)\diamond u(x,t)-\mathcal{K}\left(K_{0},\chi_{\infty},\omega_{0},G\right)\diamond u(x,t)\right|.
\end{align*}
and, by Theorem \ref{Theorem -  contraction estimate}, the first term is bounded by
\begin{equation*}
\left(C_{L}(1,T_{K},\omega_{0})+tC_{D}(T_{K},C_{K},\nu,G)\right)\left(t+\sqrt{t}\right)\sup_{y\in\mathbb{R}^{2},r\leq t}\left|u_{\delta,R}(y,r)-u(y,r)\right|.
\end{equation*}
For estimating the last term, we proceed as follows
\begin{align*}
 & \left|\int_{\mathbb{R}^{2}}\mathbb{E}\left[K_{\delta}\left(x-X^{u}(\eta,t)\right)\left(\omega_{0}(\eta)+\int_{0}^{t}G\left(X^{u}(\eta,s),s\right)\textrm{d}s~\chi_{R}(\eta)\right)\right]\textrm{d}\eta\right.\\
 & \qquad-\left.\int_{\mathbb{R}^{2}}\mathbb{E}\left[K\left(x-X^{u}(\eta,t)\right)\left(\omega_{0}(\eta)+\int_{0}^{t}G\left(X^{u}(\eta,s),s\right)\textrm{d}s\right)\right]\textrm{d}\eta\right|\\
 & \leq\int_{\mathbb{R}^{2}}\mathbb{E}\biggr[\left|K_{\delta}\left(x-X^{u}(\eta,t)\right)-K\left(x-X^{u}(\eta,t)\right)\right|\\
 &\qquad\times\left(\left|\omega_{0}(\eta)\right|+\int_{0}^{t}\left|G\left(X^{u}(\eta,s),s\right)\right|\textrm{d}s~\chi_{R}(\eta)\right)\biggr]\textrm{d}\eta\\
 & \qquad+\int_{0}^{t}\int_{\mathbb{R}^{2}}\mathbb{E}\left[\left|K\left(x-X^{u}(\eta,t)\right)\right|\left|G\left(X^{u}(\eta,s),s\right)\right|\right]\left(1-\chi_{R}(\eta)\right)\textrm{d}\eta\textrm{d}s.
\end{align*}

First we note that by Assumption \ref{assumption Kdelta - epsilon existence}, as well as Lemma \ref{Lemma -abstract bounds of convolution integral}, we have
\begin{align}
\begin{split} & \int_{\mathbb{R}^{2}}\mathbb{E}\left[\left|K_{\delta}\left(x-X^{u}(\eta,t)\right)-K\left(x-X^{u}(\eta,t)\right)\right|\right]\left|\omega_{0}(\eta)\right|\textrm{d}\eta\\
 & \leq\int_{\mathbb{R}^{2}}\int_{\mathbb{R}^{2}}\frac{\epsilon_{\delta}(y-x)}{|y-x|}p_{u}(0,\eta,t,y)\left|\omega_{0}(\eta)\right|\textrm{d}y\textrm{d}\eta\\
 & \leq\bar{\kappa}\left(q,\frac{C_{K}\sqrt{t}}{\sqrt{2\nu}}\right)\left(\int_{|z|\leq1}\frac{\epsilon_{\delta}(z)}{|z|^{\gamma}}\mathrm{d}z~\left\Vert \omega_{0}\right\Vert _{\infty}+\left(\sup_{|z|\geq1}\frac{\epsilon_{\delta}(z)}{|z|^{\gamma}}\right)\left\Vert \omega_{0}\right\Vert _{\mathrm{diag}_{1}}\right)\\
 & \leq\bar{\kappa}\left(q,\frac{C_{K}\sqrt{t}}{\sqrt{2\nu}}\right)\left(\left\Vert \omega_{0}\right\Vert _{\infty}+\left\Vert \omega_{0}\right\Vert _{1}\right)~C_{\mathrm{reg}}~\delta.
\end{split}
\label{regularisation error - homogeneous part}
\end{align}

Next we note that by the Markov property 
\begin{align*}
 & \int_{\mathbb{R}^{2}}\mathbb{E}\left[\left|K_{\delta}\left(x-X^{u}(\eta,t)\right)-K\left(x-X^{u}(\eta,t)\right)\right|\right]\int_{0}^{t}\left|G\left(X^{u}(\eta,s),s\right)\right|\textrm{d}s~\chi_{R}(\eta)\textrm{d}\eta\\
 & =\int_{0}^{t}\int_{\mathbb{R}^{2}}\int_{\mathbb{R}^{2}}\left|K_{\delta}\left(x-y\right)-K\left(x-y\right)\right|p_{u}(s,z,t,y)\\
 &\qquad\times\left|G\left(z,s\right)\right|\int_{\mathbb{R}^{2}}p_{u}(0,\eta,s,z)\chi_{R}(\eta)\textrm{d}\eta\mathrm{d}z\mathrm{d}y\textrm{d}s.
\end{align*}
Setting $w_{s}(z):=\left|G\left(z,s\right)\right|\int_{\mathbb{R}^{2}}p_{u}(0,\eta,s,z)\chi_{R}(\eta)\textrm{d}\eta$,
then, just as in the proof of Theorem \ref{Theorem -  contraction estimate},
one derives the estimate 
\begin{align*}
\left\Vert w_{s}\right\Vert _{\infty}+\left\Vert w_{s}\right\Vert _{1}\leq\left(\left\Vert G(\cdot,s)\right\Vert _{\infty}+\left\Vert G(\cdot,s)\right\Vert _{1}\right)\bar{\kappa}\left(q,\frac{C_{K}\sqrt{s}}{\sqrt{2\nu}}\right).
\end{align*}

Thus, just as we did for the homogeneous part before, we obtain by
Assumption \ref{assumption Kdelta - epsilon existence} and Lemma \ref{Lemma -abstract bounds of convolution integral}
the following estimate 
\begin{align}
\begin{split} & \int_{\mathbb{R}^{2}}\mathbb{E}\left[\left|K_{\delta}\left(x-X^{u}(\eta,t)\right)-K\left(x-X^{u}(\eta,t)\right)\right|\right]\int_{0}^{t}\left|G\left(X^{u}(\eta,s),s\right)\right|\textrm{d}s~\chi_{R}(\eta)\textrm{d}\eta\\
 & \leq C_{\mathrm{reg}}~\delta\int_{0}^{t}~\bar{\kappa}\left(q,C_{K}\sqrt{s}\right)~C_{G}(s)~\bar{\kappa}(q,C_{K}\sqrt{t-s})~\mathrm{d}s.
\end{split}
\label{regularisation error - inhomogeneous part}
\end{align}

Finally we estimate the localisation error. Using the Markov property,
we note that 
\begin{align*}
 & \int_{0}^{t}\int_{\mathbb{R}^{2}}\mathbb{E}\left[\left|K\left(x-X^{u}(\eta,t)\right)\right|\left|G\left(X^{u}(\eta,s),s\right)\right|\right]\left(1-\chi_{R}(\eta)\right)\textrm{d}\eta\textrm{d}s\\
 & =\int_{0}^{t}\int_{\mathbb{R}^{2}}\int_{\mathbb{R}^{2}}\left|K\left(x-y\right)\right|p_{u}(s,z,t,y)\left|G\left(z,s\right)\right|\int_{\mathbb{R}^{2}}p_{u}(0,\eta,s,z)\left(1-\chi_{R}(\eta)\right)\textrm{d}\eta\textrm{d}y\textrm{d}z\textrm{d}s.
\end{align*}

We define $v_{s,R}(z):=\left|G\left(z,s\right)\right|~\int_{\mathbb{R}^{2}}p_{u}(0,\eta,s,z)\left(1-\chi_{R}(\eta)\right)\textrm{d}\eta$
and note that by the basic Aronson estimate \eqref{rough Aronson estimate}
we have 
\begin{align*}
v_{s,R}(z)\leq M(C_{K},s,\nu)^{2}\left|G\left(z,s\right)\right|\int_{\left\{ |\eta|>R\right\} }\frac{e^{-\frac{|\eta-z|^{2}}{2M(C_{K},s,\nu)s}}}{2\pi s~M(C_{K},s)}\textrm{d}\eta.
\end{align*}
Let $\mathrm{H}\sim\mathcal{N}\left(0,M(C_{K},s,\nu)s\mathrm{I}\right)$ be a normally distributed random variable, then we use the Markov
inequality to imply from the previous inequality  
\begin{align*}
v_{s,R}(z) & \leq M(C_{K},s,\nu)^{2}\left|G\left(z,s\right)\right|\mathbb{P}\left(\left|\mathrm{H}-z\right|>R\right)\\
 & \leq M(C_{K},s,\nu)^{2}\left|G\left(z,s\right)\right|\frac{\sqrt{\frac{\pi M(C_{K},s,\nu)s}{2}}+|z|}{R},
\end{align*}
and therefore if we set 
\begin{align}
C_{G}(t):=\sup_{s\leq t}\left(\left\Vert G(\cdot,s)(1+|\cdot|)\right\Vert _{\infty}+\left\Vert G(\cdot,s)(1+|\cdot|)\right\Vert _{1}\right),\label{Assumption first moment of G}
\end{align}
then Lemma \ref{Lemma -abstract bounds of convolution integral} gives
us 
\begin{align}
\begin{split} & \int_{0}^{t}\int_{\mathbb{R}^{2}}\mathbb{E}\left[\left|K\left(x-X^{u}(\eta,t)\right)\right|\left|G\left(X^{u}(\eta,s),s\right)\right|\right]\left(1-\chi_{R}(\eta)\right)\textrm{d}\eta\textrm{d}s\\
 & \leq\int_{0}^{t}\bar{\kappa}\left(q,\frac{C_{K}\sqrt{s}}{\sqrt{2\nu}}\right)\left(2\pi\left\Vert v_{s,R}(z)\right\Vert _{\infty}+\left\Vert v_{s,R}(z)\right\Vert _{1}\right)\mathrm{d}s\\
 & \leq\int_{0}^{t}2\pi\bar{\kappa}\left(q,\frac{C_{K}\sqrt{s}}{\sqrt{2\nu}}\right)M(C_{K},s,\nu)^{2}\left(1+\sqrt{\frac{\pi M(C_{K},s,\nu)s}{2}}+1\right)C_{G}(s)\mathrm{d}s~\frac{1}{R}.
\end{split}
\label{localzation error}
\end{align}
Note that by Assumption \ref{Assumption - initial vorticity and force }
we can guarantee the boundedness of $C_{G}$ on finite time intervals,
and thus the integral above can be bounded.

Combining \eqref{regularisation error - homogeneous part}, \eqref{regularisation error - inhomogeneous part} and \eqref{localzation error}, we therefore derive that for any $T>0$
there exists a constant $\tilde{C}(T)$, such that 
\begin{align*}
\sup_{t\leq T,x\in\mathbb{R}^{2}}\left|\mathcal{K}\left(K_{\delta},\chi_{R},\omega_{0},G\right)\diamond u(x,t)-\mathcal{K}\left(K_{0},\chi_{\infty},\omega_{0},G\right)\diamond u(x,x)\right|\leq\tilde{C}(T)\left(\delta+\frac{1}{R}\right).
\end{align*}
Thus for $T\leq T_{K}$, such that 
\begin{align*}
\left(C_{L}(1,T_{K},\omega_{0})+TC_{D}(T_{K},C_{K},\nu,G)\right)\left(T+\sqrt{T}\right)\leq1/2,
\end{align*}
it follows that 
\begin{align*}
\left|u_{\delta,R}(x,t)-u(x,t)\right|\leq2\tilde{C}(T)\left(\delta+\frac{1}{R}\right),
\end{align*}
which concludes our proof. 
\end{proof}
In the same manner one can derive the robustness of $u_{\delta,R}$
with respect to the initial vorticity $\omega_{0}$ and the force
$G$, as the following lemma shows.
\begin{lem}
\label{Lemma - robustness with respect to parameters} Let two initial
vorticities $\omega_{0},\tilde{\omega}_{0}$ and two external forces
$G,\tilde{G}$ be given. Then for any drift function $b$ with $\sup_{t\leq T_{K},x\in\mathbb{R}^{2}}\|b(x,t)\|\leq C_{K}$,
we have the following property 
\begin{align*}
\begin{split} \sup_{t\leq T_{K},x\in\mathbb{R}^{2}}\left|\left(\mathcal{K}\left(K_{\delta},\chi_{R},\omega_{0},G\right)\diamond b\right)(x,t)-\left(\mathcal{K}\left(K_{\delta},\chi_{R},\tilde{\omega}_{0},\tilde{G}\right)\diamond b\right)(x,t)\right| \leq C(T_{K}) &\\
 \times\sup_{s\leq T_{K}}\left(\left\Vert \omega_{0}-\tilde{\omega}_{0}\right\Vert _{\infty}+\left\Vert \omega_{0}-\tilde{\omega}_{0}\right\Vert _{1}+\left\Vert G(\cdot,s)-\tilde{G}(\cdot,s)\right\Vert _{\infty}+\left\Vert G(\cdot,s)-\tilde{G}(\cdot,s)\right\Vert _{1}\right)&.
\end{split}
%\label{parameter  robustness - general drift}
\end{align*}
To emphasize the dependence on the initial vorticity and forcing,
let us denote by $u_{\delta,R}\left[\omega_{0},G\right]$ the mean
field velocity $u_{\delta,R}\left[\omega_{0},G\right]=\mathcal{K}\left(K_{\delta},\chi_{R},\omega_{0},G\right)\diamond u_{\delta,R}\left[\omega_{0},G\right]$
as found in Corollary \ref{Corollary - small time well-posedness}.
Then for some sufficiently small timeframe $0<T\leq T_{K}$, there
exists a constant $C$ such that 
\begin{align*}
\begin{split} & \sup_{t\leq T,x\in\mathbb{R}^{2}}\left|u_{\delta,R}\left[\omega_{0},G\right](x,t)-u_{\delta,R}\left[\tilde{\omega}_{0},\tilde{G}\right](x,t)\right|\\
 & \leq C\sup_{s\leq T}\left(\left\Vert \omega_{0}-\tilde{\omega}_{0}\right\Vert _{\infty}+\left\Vert \omega_{0}-\tilde{\omega}_{0}\right\Vert _{1}+\left\Vert G(\cdot,s)-\tilde{G}(\cdot,s)\right\Vert _{\infty}+\left\Vert G(\cdot,s)-\tilde{G}(\cdot,s)\right\Vert _{1}\right).
\end{split}
%\label{parameter  robustness - mean field velocity}
\end{align*}
\end{lem}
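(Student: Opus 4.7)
The plan is to prove the lemma in two stages, mirroring the structure of the statement. The two assertions decouple: the first is a direct sensitivity estimate on the operator $\mathcal{K}(K_\delta,\chi_R,\cdot,\cdot)\diamond b$ for a \emph{fixed} drift $b$, and the second bootstraps this sensitivity through the fixed-point equation by combining it with the contraction estimate in Theorem \ref{Theorem -  contraction estimate}.

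For the first assertion, the crucial observation is that since the drift $b$ is common to both sides, the diffusion $X^b$ and its transition density $p_b$ are identical, so subtracting the two evaluations of $\mathcal{K}$ yields
\begin{align*}
&\left(\mathcal{K}(K_\delta,\chi_R,\omega_0,G)\diamond b\right)(x,t)-\left(\mathcal{K}(K_\delta,\chi_R,\tilde\omega_0,\tilde G)\diamond b\right)(x,t)\\
&\quad=\int_{\mathbb{R}^2}\mathbb{E}\bigl[K_\delta(x-X^b(\eta,t))\bigr](\omega_0-\tilde\omega_0)(\eta)\,\mathrm{d}\eta\\
&\qquad+\int_0^t\int_{\mathbb{R}^2}\mathbb{E}\bigl[K_\delta(x-X^b(\eta,t))(G-\tilde G)(X^b(\eta,s),s)\bigr]\chi_R(\eta)\,\mathrm{d}\eta\,\mathrm{d}s.
\end{align*}
For the homogeneous piece I would apply Lemma \ref{Lemma -abstract bounds of convolution integral} with $\Theta(x,y)=K_\delta(x-y)$, $\gamma=1$, $C_\Theta=C_0$ and $f(y,z)=(\omega_0-\tilde\omega_0)(z)$; since $f$ depends only on $z$, the Remark following Theorem \ref{Theorem - abstract Lipschitz estimate} gives $\|f\|_{\mathrm{diag}_1}=\|\omega_0-\tilde\omega_0\|_1$, and one obtains a bound proportional to $\|\omega_0-\tilde\omega_0\|_\infty+\|\omega_0-\tilde\omega_0\|_1$.

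For the inhomogeneous piece I would mimic the argument used in Theorem \ref{Theorem -  contraction estimate} and Lemma \ref{Lemma - boundedness proof}: using the Markov property, rewrite the expectation as
\begin{equation*}
\int_0^t\int_{\mathbb{R}^2}\int_{\mathbb{R}^2}K_\delta(x-y)p_b(s,z,t,y)\,w_s(z)\,\mathrm{d}y\,\mathrm{d}z\,\mathrm{d}s,
\end{equation*}
with $w_s(z):=(G-\tilde G)(z,s)\int_{\mathbb{R}^2}p_b(0,\eta,s,z)\chi_R(\eta)\,\mathrm{d}\eta$. The improved Aronson estimate \eqref{improved Aronson estimates} and Corollary \ref{improved Aronson - integral} give the familiar bounds $\|w_s\|_\infty\leq \|G(\cdot,s)-\tilde G(\cdot,s)\|_\infty\bar\kappa(q,C_K\sqrt{s}/\sqrt{2\nu})$ and $\|w_s\|_1\leq \|G(\cdot,s)-\tilde G(\cdot,s)\|_1\bar\kappa(q,C_K\sqrt{s}/\sqrt{2\nu})$ exactly as in the proof of Theorem \ref{Theorem -  contraction estimate}. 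Then a second application of Lemma \ref{Lemma -abstract bounds of convolution integral} with $f(y,z)=w_s(z)$, followed by integration in $s\in[0,T_K]$, delivers a bound of the required form. Summing the two contributions yields the first assertion with some constant $C(T_K)$ depending only on $T_K$, $C_K$ and $\nu$.

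For the second assertion I would exploit that both $u_{\delta,R}[\omega_0,G]$ and $u_{\delta,R}[\tilde\omega_0,\tilde G]$ are fixed points of their respective operators, and by the Remark following Corollary \ref{Corollary - small time well-posedness} both are bounded by $C_K$ on $[0,T_K]$. Writing
\begin{align*}
u_{\delta,R}[\omega_0,G]-u_{\delta,R}[\tilde\omega_0,\tilde G]
&=\bigl(\mathcal{K}(K_\delta,\chi_R,\omega_0,G)\diamond u_{\delta,R}[\omega_0,G]-\mathcal{K}(K_\delta,\chi_R,\omega_0,G)\diamond u_{\delta,R}[\tilde\omega_0,\tilde G]\bigr)\\
&\quad+\bigl(\mathcal{K}(K_\delta,\chi_R,\omega_0,G)\diamond u_{\delta,R}[\tilde\omega_0,\tilde G]-\mathcal{K}(K_\delta,\chi_R,\tilde\omega_0,\tilde G)\diamond u_{\delta,R}[\tilde\omega_0,\tilde G]\bigr),
\end{align*}
Theorem \ref{Theorem -  contraction estimate} bounds the first bracket by $(C_L(1,T_K,\omega_0)+TC_D(T_K,C_K,\nu,G))(T+\sqrt{T})\sup|u_{\delta,R}[\omega_0,G]-u_{\delta,R}[\tilde\omega_0,\tilde G]|$, while the second bracket is bounded by the first part of the lemma. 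Choosing $T\leq T_K$ small enough that $(C_L+TC_D)(T+\sqrt{T})\leq 1/2$ (as already done in Lemma \ref{Lemma - robustness in delta and R}), one absorbs the Lipschitz term into the left-hand side and recovers the claimed estimate with a factor of $2C(T_K)$.

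The routine calculations are all absorbed into the previously established tools; the one delicate accounting point, which I expect to be the main care-requiring step, is verifying that the $L^\infty$ and $L^1$ bounds on $w_s$ survive the small-time window uniformly in $\delta$ and $R$, which is precisely why Corollary \ref{improved Aronson - integral} and the uniformity of $T_K$, $C_K$ noted in the remark after Lemma \ref{Lemma - boundedness proof} are essential.
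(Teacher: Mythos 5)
Your proof is correct and follows essentially the same route as the paper's: the same two-piece decomposition into an initial-vorticity contribution and a forcing contribution, an application of Lemma \ref{Lemma -abstract bounds of convolution integral} (via the Remark on admissible $f$) for the homogeneous piece, and a bootstrap of the first assertion through the contraction estimate of Theorem \ref{Theorem -  contraction estimate} for the second assertion. The only minor internal variation is in the inhomogeneous term: you apply Lemma \ref{Lemma -abstract bounds of convolution integral} directly to the $y$-integral with $f(y,z)=w_{s}(z)$ and pick up a product $\bar\kappa\left(q,\tfrac{C_{K}\sqrt{t-s}}{\sqrt{2\nu}}\right)\bar\kappa\left(q,\tfrac{C_{K}\sqrt{s}}{\sqrt{2\nu}}\right)$, in the style of Lemma \ref{Lemma - boundedness proof}, whereas the paper instead invokes the kernel bound $\left|\Theta(x,z)\right|\lesssim|x-z|^{-1}$ for $\Theta(x,z)=\int K_{\delta}(x-y)p_{\tilde b}(s,z,t,y)\,\mathrm{d}y$ established in Theorem \ref{Theorem -  contraction estimate} and then applies the Lemma once to the $\eta$-integral; both yield the claimed estimate with constants of the same quality, and your version is if anything slightly more self-contained because it does not re-invoke the $\Theta$ bound.
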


\begin{proof}
First we decompose the error into two parts 
\begin{align*}
 & \left|\mathcal{K}\left(K_{\delta},\chi_{R},\omega_{0},G\right)\diamond b(x,t)-\mathcal{K}\left(K_{\delta},\chi_{R},\tilde{\omega}_{0},\tilde{G}\right)\diamond b(x,t)\right|\\
 & =\left|\int_{\mathbb{R}^{2}}\mathbb{E}\left[K_{\delta}\left(x-X^{b}(\eta,t)\right)\left(\omega_{0}(\eta)+\int_{0}^{t}G\left(X^{b}(\eta,s),s\right)\textrm{d}s~\chi_{R}(\eta)\right)\right]\textrm{d}\eta\right.\\
 & \qquad-\left.\int_{\mathbb{R}^{2}}\mathbb{E}\left[K_{\delta}\left(x-X^{b}(\eta,t)\right)\left(\tilde{\omega}_{0}(\eta)+\int_{0}^{t}\tilde{G}\left(X^{b}(\eta,s),s\right)\textrm{d}s~\chi_{R}(\eta)\right)\right]\textrm{d}\eta\right|\\
 & \leq\left|\int_{\mathbb{R}^{2}}\mathbb{E}\left[K_{\delta}\left(x-X^{b}(\eta,t)\right)\left(\omega_{0}(\eta)-\tilde{\omega}_{0}(\eta)\right)\right]\textrm{d}\eta\right|\\
 & \qquad+\int_{0}^{t}\left|\int_{\mathbb{R}^{2}}\mathbb{E}\left[K_{\delta}\left(x-X^{b}(\eta,t)\right)\left(G\left(X^{b}(\eta,s),s\right)-\tilde{G}\left(X^{b}(\eta,s),s\right)\right)\right]~\chi_{R}(\eta)\textrm{d}\eta\right|\mathrm{d}s.
\end{align*}

The first term can be estimated by Lemma \ref{Lemma -abstract bounds of convolution integral},
as 
\begin{align*}
&\left|\int_{\mathbb{R}^{2}}\mathbb{E}\left[K_{\delta}\left(x-X^{b}(\eta,t)\right)\left(\omega_{0}(\eta)-\tilde{\omega}_{0}(\eta)\right)\right]\textrm{d}\eta\right|\\
 & \leq\int_{\mathbb{R}^{2}}\int_{\mathbb{R}^{2}}\left|K_{\delta}\left(x-y\right)\right|\left|\omega_{0}(\eta)-\tilde{\omega}_{0}(\eta)\right|\textrm{d}y\textrm{d}\eta\\
 & \leq C_{0}\bar{\kappa}\left(q,\frac{C_{K}\sqrt{t}}{\sqrt{2\nu}}\right)\left(2\pi\left\Vert \omega_{0}-\tilde{\omega}_{0}\right\Vert _{\infty}+\left\Vert \omega_{0}-\tilde{\omega}_{0}\right\Vert _{1}\right).
\end{align*}

For the second term we note that for $\Theta(x,z):=\int_{\mathbb{R}^{2}}K_{\delta}(y-x)p_{b}(s,z,t,y)\mathrm{d}y$, defined as in \eqref{definition theta} in Theorem \ref{Theorem -  contraction estimate}, we have 
\begin{align*}
\left|\Theta(x,z)\right|\leq\frac{M(C_{K},T,\nu)^{2}C_{G}+1}{2|x-z|},
\end{align*}
as proved in Theorem \ref{Theorem -  contraction estimate}. Therefore
we get 
\begin{align*}
  \int_{0}^{t}&\left|\int_{\mathbb{R}^{2}}\mathbb{E}\left[K_{\delta}\left(x-X^{b}(\eta,t)\right)\left(G\left(X^{b}(\eta,s),s\right)-\tilde{G}\left(X^{b}(\eta,s),s\right)\right)\right]~\chi_{R}(\eta)\textrm{d}\eta\right|\mathrm{d}s\\
 & =\int_{0}^{t}\int_{\mathbb{R}^{2}}\int_{\mathbb{R}^{2}}\int_{\mathbb{R}^{2}}K_{\delta}(x-y)p_{b}(s,z,t,y)\mathrm{d}y~p_{b}(0,\eta,s,z)G(z,s)\mathrm{d}z~\chi_{R}(\eta)\mathrm{d}\eta\mathrm{d}s\\
 & \leq\frac{M(C_{K},T,\nu)^{2}C_{G}+1}{2}\int_{0}^{t}\bar{\kappa}\left(q,\frac{C_{K}\sqrt{s}}{\sqrt{2\nu}}\right)\\ & \qquad \times \left(2\pi\left\Vert G(\cdot,s)-\tilde{G}(\cdot,s)\right\Vert _{\infty}+\left\Vert G(\cdot,s)-\tilde{G}(\cdot,s)\right\Vert _{1}\right)\mathrm{d}s.
\end{align*}
Thus our first claim follows immediately. The second claim follows as in Lemma \ref{Lemma - robustness in delta and R}. 
\end{proof}
The lemma we just proved does not just give us that $u_{\delta,R}$ depends continuously on the initial vorticity and the external force. As the following result shows, the robustness estimate from Lemma \ref{Lemma - robustness with respect to parameters} allows us to show the Lipschitz continuity of $u_{\delta,R}$ for all $\delta\geq0$ and $R\in[0,+\infty]$.
\begin{lem}
\label{Lemma - Lipschitz continuity} There exists a time frame $0<T\leq T_{K}$
and a constant $C$ such that 
\begin{align*}
\frac{\left|u_{\delta,R}(x+\xi,t)-u_{\delta,R}(x,t)\right|}{\left|\xi\right|}\leq C\sup_{s\leq T}\left(\left\Vert \nabla\omega_{0}\right\Vert _{\infty}+\left\Vert \nabla\omega_{0}\right\Vert _{1}+\left\Vert \nabla G(\cdot,s)\right\Vert _{\infty}+\left\Vert \nabla G(\cdot,s)\right\Vert _{1}\right),
\end{align*}
for all $\xi\in\mathbb{R}^{2}, \xi \neq 0$, $\delta\geq0$ and $R\in[0,\infty]$.
Thus, for small time frames, $u_{\delta,R}$ is Lipschitz continuous
with a Lipschitz constant independent of $\delta$ and $R$. 
\end{lem}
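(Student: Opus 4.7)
The plan is to exploit the translation covariance of the mean-field fixed point equation, combined with a suitable extension of Lemma~\ref{Lemma - robustness with respect to parameters} that also accounts for a shift of the cutoff $\chi_{R}$. For fixed $\xi\in\mathbb{R}^{2}$, write $U:=u_{\delta,R}$, $U^{\xi}(x,t):=U(x+\xi,t)$, and $f^{\xi}(\cdot):=f(\cdot+\xi)$ for any function $f$. A direct substitution using \eqref{regularised mf equation} shows that $Y(\eta,t):=X_{\delta,R}(\eta+\xi,t)-\xi$ is a weak solution of $\mathrm{d}Y=U^{\xi}(Y,t)\,\mathrm{d}t+\sqrt{2\nu}\,\mathrm{d}B_{t}$ with $Y(\eta,0)=\eta$. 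Inserting $X_{\delta,R}(\eta+\xi,t)=\xi+Y(\eta,t)$ into \eqref{mean field form of drift - regularised} and changing the integration variable $\eta\mapsto\eta+\xi$ yields the key identity
\begin{equation*}
U^{\xi}=\mathcal{K}\bigl(K_{\delta},\chi_{R}^{\xi},\omega_{0}^{\xi},G^{\xi}\bigr)\diamond U^{\xi}.
\end{equation*}
Since the a~priori bound $\|U\|_{\infty}\le C_{K}$ is inherited by $U^{\xi}$, all previously derived estimates apply to this shifted fixed point as well.

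I then write
\begin{equation*}
U^{\xi}-U=\bigl[\mathcal{K}(K_{\delta},\chi_{R}^{\xi},\omega_{0}^{\xi},G^{\xi})\diamond U^{\xi}-\mathcal{K}(K_{\delta},\chi_{R},\omega_{0},G)\diamond U^{\xi}\bigr]+\bigl[\mathcal{K}(K_{\delta},\chi_{R},\omega_{0},G)\diamond U^{\xi}-\mathcal{K}(K_{\delta},\chi_{R},\omega_{0},G)\diamond U\bigr].
\end{equation*}
The second bracket is a change of drift at fixed parameters, and is bounded in sup norm by $(C_{L}+TC_{D})(T+\sqrt{T})\sup|U^{\xi}-U|$ via Theorem~\ref{Theorem -  contraction estimate}. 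Choosing $T$ sufficiently small so that this prefactor is at most $1/2$, I absorb this contribution into the left-hand side.

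For the first bracket, I adapt the strategy of Lemma~\ref{Lemma - robustness with respect to parameters}. The homogeneous part is controlled by Lemma~\ref{Lemma -abstract bounds of convolution integral} applied to $\omega_{0}^{\xi}-\omega_{0}$, noting that by the fundamental theorem of calculus and the compact support of $\omega_{0}$ one has $\|\omega_{0}^{\xi}-\omega_{0}\|_{\infty}\le|\xi|\,\|\nabla\omega_{0}\|_{\infty}$ and $\|\omega_{0}^{\xi}-\omega_{0}\|_{1}\le|\xi|\,\|\nabla\omega_{0}\|_{1}$. For the inhomogeneous part I apply the algebraic identity
\begin{equation*}
G^{\xi}(X,s)\chi_{R}^{\xi}(\eta)-G(X,s)\chi_{R}(\eta)=\chi_{R}^{\xi}(\eta)\bigl(G^{\xi}-G\bigr)(X,s)+G(X,s)\bigl(\chi_{R}^{\xi}-\chi_{R}\bigr)(\eta),
\end{equation*}
where the first summand is bounded exactly as in Lemma~\ref{Lemma - robustness with respect to parameters} and produces a contribution of order $|\xi|\sup_{s\le T}(\|\nabla G(\cdot,s)\|_{\infty}+\|\nabla G(\cdot,s)\|_{1})$. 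The second summand involves the cutoff shift and is the main obstacle, since it is not covered by Lemma~\ref{Lemma - robustness with respect to parameters}. I handle it by applying Lemma~\ref{Lemma -abstract bounds of convolution integral} to the auxiliary weight $w_{s}(z):=G(z,s)\int p_{U^{\xi}}(0,\eta,s,z)(\chi_{R}^{\xi}-\chi_{R})(\eta)\,\mathrm{d}\eta$; using Corollary~\ref{improved Aronson - integral} and the bound $\|\chi_{R}^{\xi}-\chi_{R}\|_{\infty}\le|\xi|\,\|\nabla\chi_{R}\|_{\infty}$, which by Assumption~3 is uniform in $R$, one obtains a contribution of the form $C|\xi|\,\|\nabla\chi_{R}\|_{\infty}\sup_{s\le T}(\|G(\cdot,s)\|_{\infty}+\|G(\cdot,s)\|_{1})$, which is absorbed into the final constant $C$.

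Combining the estimates, for $T>0$ sufficiently small one arrives at
\begin{equation*}
\sup_{x\in\mathbb{R}^{2},\,t\le T}|U^{\xi}-U|\le\frac{1}{2}\sup|U^{\xi}-U|+C|\xi|\sup_{s\le T}\bigl(\|\nabla\omega_{0}\|_{\infty}+\|\nabla\omega_{0}\|_{1}+\|\nabla G(\cdot,s)\|_{\infty}+\|\nabla G(\cdot,s)\|_{1}\bigr)
\end{equation*}
with $C$ independent of $\delta$ and $R$, and the claim follows by absorbing the first term on the right. The principal obstacle is the treatment of the cutoff shift; uniformity in $R$ relies on interpreting Assumption~3 as providing a bound on $\|\nabla\chi_{R}\|_{\infty}$ that is independent of $R$, which is the case for standard choices of cutoff.
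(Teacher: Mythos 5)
Your proof follows the same overall skeleton as the paper's: use translation covariance of the fixed-point relation for $\mathcal{K}\left(\cdot\right)\diamond\cdot$, split the increment into a ``parameter perturbation'' part controlled by Lemma~\ref{Lemma - robustness with respect to parameters} and a ``drift perturbation'' part controlled by Theorem~\ref{Theorem -  contraction estimate}, and absorb the drift part into the left--hand side after shrinking $T$. You differ from the paper in two ways. First, you implement the covariance via the pure translation $f^{\xi}(\cdot)=f(\cdot+\xi)$, whereas the paper uses the reflection $f_{\xi-}(\cdot)=f(\xi-\cdot)$; your choice is cleaner and, incidentally, dodges a sign discrepancy in the paper's displayed change--of--variables identity where $K_{\delta}(x+\xi-y)$ is equated (without invoking oddness of $K_{\delta}$) to $K_{\delta}(x-y)$ in the shifted frame. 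Second, and more substantively, you correctly register that the cutoff $\chi_{R}$ also gets shifted under the covariance: the paper's identity $\left(\mathcal{K}\left(K_{\delta},\chi_{R},\omega_{0},G\right)\diamond b\right)(x+\xi,t)=\left(\mathcal{K}\left(K_{\delta},\chi_{R},(\omega_{0})_{\xi-},G_{\xi-}\right)\diamond b_{\xi-}\right)(x,t)$ keeps $\chi_{R}$ fixed, which is not exact (a faithful change of variables produces $(\chi_{R})_{\xi-}$), and then invokes Lemma~\ref{Lemma - robustness with respect to parameters}, which only perturbs $\omega_{0}$ and $G$ at fixed cutoff. Your explicit treatment of the $\chi_{R}^{\xi}-\chi_{R}$ contribution closes this. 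One refinement you should make explicit rather than wave away: the cutoff--shift term you derive is of order $|\xi|\,\left\Vert \nabla\chi_{R}\right\Vert _{\infty}\sup_{s\leq T}\left(\left\Vert G(\cdot,s)\right\Vert _{\infty}+\left\Vert G(\cdot,s)\right\Vert _{1}\right)$, i.e.\ it involves norms of $G$ itself rather than of $\nabla G$. To land exactly on the stated bound you should invoke the compact support of $G$ from Assumption~1 (say $\mathrm{spt}\,G(\cdot,s)\subset\{|x|\leq R_{0}\}$) and the fundamental theorem of calculus to get $\left\Vert G(\cdot,s)\right\Vert _{\infty}+\left\Vert G(\cdot,s)\right\Vert _{1}\leq c(R_{0})\left(\left\Vert \nabla G(\cdot,s)\right\Vert _{\infty}+\left\Vert \nabla G(\cdot,s)\right\Vert _{1}\right)$, rather than simply ``absorbing it into $C$''.
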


\begin{proof}
First we note that by a simple change of variables we have for any
$\xi\in\mathbb{R}^{2}$ and any drift function $b$
\begin{align*}
\left(\mathcal{K}\left(K_{\delta},\chi_{R},\omega_{0},G\right)\diamond b\right)(x+\xi,t)=\left(\mathcal{K}\left(K_{\delta},\chi_{R},\left(\omega_{0}\right)_{\xi-},G_{\xi-}\right)\diamond b_{\xi-}\right)(x,t),
\end{align*}
due to 
\begin{align*}
 &\int_{\mathbb{R}^{2}}\int_{\mathbb{R}^{2}}K_{\delta}(x+\xi-y)p_{b}(0,\eta,t,y)\mathrm{d}y~\omega_{0}(\eta)\mathrm{d}\eta \\&=\int_{\mathbb{R}^{2}}\int_{\mathbb{R}^{2}}K_{\delta}(x-y)p_{b}(0,\xi-\eta,t,\xi-y)\mathrm{d}y~\omega_{0}(\xi-\eta)\mathrm{d}\eta,
\end{align*}
and a similar equality for the inhomogeneous parts which follows from the identity $p_{b}(\tau,\xi-z,t,\xi-y)=p_{b_{\xi-}}(\tau,z,t,y)$ mentioned earlier in \eqref{reflection formula for transition densities}.

Since $\mathcal{K}\left(K_{\delta},\chi_{R},\omega_{0},G\right)\diamond u_{\delta,R} = u_{\delta, R}$, the difference
\begin{align*}
\left|u_{\delta,R}(x+\xi,t)-u_{\delta,R}(x,t)\right|
\end{align*}
is equal to
\begin{align*}
 & \left|\left(\mathcal{K}\left(K_{\delta},\chi_{R},\omega_{0},G\right)\diamond u_{\delta,R}\right)(x+\xi,t)-\left(\mathcal{K}\left(K_{\delta},\chi_{R},\omega_{0},G\right)\diamond u_{\delta,R}\right)(x,t)\right|\\
 & \leq\left|\left(\mathcal{K}\left(K_{\delta},\chi_{R},\left(\omega_{0}\right)_{\xi-},G_{\xi-}\right)\diamond\left(u_{\delta,R}\right)_{\xi-}\right)(x,t)-\left(\mathcal{K}\left(K_{\delta},\chi_{R},\omega_{0},G\right)\diamond\left(u_{\delta,R}\right)_{\xi-}\right)(x,t)\right|\\
 & \qquad+\left|\left(\mathcal{K}\left(K_{\delta},\chi_{R},\omega_{0},G\right)\diamond\left(u_{\delta,R}\right)_{\xi-}\right)(x,t)-\left(\mathcal{K}\left(K_{\delta},\chi_{R},\omega_{0},G\right)\diamond u_{\delta,R}\right)(x,t)\right|.
\end{align*}
Using Theorem \ref{Theorem -  contraction estimate} we conclude that the second term is bounded by 
\begin{align*}
\left(C_{L}(\omega_{0})+tC_{L}(G)\right)\left(t+\sqrt{t}\right)\sup_{y\in\mathbb{R}^{2},r\leq t}\left|u_{\delta,R}(y+\xi,r)-u_{\delta,R}(y,r)\right|.
\end{align*}
Lemma \ref{Lemma - robustness with respect to parameters} implies that for some constant $C(T_{K})>0$ the first term is bounded by
\begin{align*}
&C(T_{K})\sup_{s\leq T_{K}}\Big(\left\Vert \omega_{0}(\xi-\cdot)-\omega_{0}(\cdot)\right\Vert _{\infty}+\left\Vert \omega_{0}(\xi-\cdot)-\omega_{0}(\cdot)\right\Vert _{1}\\
 & \qquad+\left\Vert G(\xi-\cdot,s)-G(\cdot,s)\right\Vert _{\infty}+\left\Vert G(\xi-\cdot,s)-G(\cdot,s)\right\Vert _{1}\Big)\\
 & \leq C(T_{K})\sup_{s\leq T_{K}}\left(\left\Vert \nabla\omega_{0}\right\Vert _{\infty}+\left\Vert \nabla\omega_{0}\right\Vert _{1}+\left\Vert \nabla G(\cdot,s)\right\Vert_{\infty}+\left\Vert \nabla G(\cdot,s)\right\Vert_{1}\right)\left|\xi\right|.
\end{align*}

Thus for some small $T>0$ such that $\left(C_{L}(\omega_{0})+TC_{L}(G)\right)\left(T+\sqrt{T}\right)\leq \frac{1}{2}$, our claim immediately follows. 
\end{proof}
Lemma \ref{Lemma - Lipschitz continuity} in particular also holds
for $\delta=0$ and $R=+\infty$ and thus it proves the Lipschitz
continuity for the true fluid velocity $u$ as well. As Lipschitz
continuity also implies differentiability (almost everywhere) with
bounded derivatives, Lemma \ref{Lemma - Lipschitz continuity} proves
a certain regularity of $u$. 

\section{Convergence of Random Vortex Schemes}
\label{section convergence}

In this section we prove the convergence of the FMCRV scheme, an analogous proof shows the convergence of the other schemes (PMCRV, PiCRV) described in Section \ref{Random vortex method -- flows without constraint}.

To state our main result in this section, we shall repeat some definitions to establish the notation which will be used in this section.

Let $\delta>0$ and $R>0$ be two parameters. Then the (field-valued)
McKean--Vlasov stochastic differential equations 
\[
\mathrm{d}X_{\delta,R}(x,t)=u_{\delta,R}(X_{\delta,R}(x,t),t)\mathrm{d}t+\sqrt{2\nu}\mathrm{d}B_{t},~X_{\delta,R}(x,0)=x
\]
and 
\begin{align*}
u_{\delta,R}(x,t) & =\int_{\mathbb{R}^{2}}\mathbb{E}\left[K_{\delta}(X_{\delta,R}(\eta,t)-x)\right]\omega_{0}(\eta)\textrm{d}\eta\\
 & \;+\int_{0}^{t}\int_{\mathbb{R}^{2}}\mathbb{E}\left[K_{\delta}(X_{\delta,R}(\eta,t)-x)G(X_{\delta,R}(\eta,s),s)\right]\chi_{R}(\eta)\textrm{d}\eta\textrm{d}s
\end{align*}
for $x\in\mathbb{R}^{2}$ and $t\geq0$, where $B$ is a two-dimensional
Brownian motion on some probability space.

Let $h>0$ and $N\in\mathbb{N}$ are given. The Monte Carlo approximation
$\hat{u}_{N,h,\delta,R}$ is defined via the following system 
\begin{align*}
\hat{u}_{N,h,\delta,R}(x,t)=\frac{1}{N}\sum_{k=1}^{N}\sum_{j\in\mathbb{Z}^{2}}h^{2}\chi_{R}(jh)K_{\delta}(\hat{X}_{N,h,\delta,R}^{k}(j,t)-x)& \\ \times\left(\omega_{0}(jh)+\int_{0}^{t}G(\hat{X}_{N,h,\delta,R}^{k}(j,s),s)\textrm{d}s\right)&
\end{align*}
and 
\[
\mathrm{d}\hat{X}_{N,h,\delta,R}^{k}(j,t)=\hat{u}_{N,h,\delta,R}(\hat{X}_{N,h,\delta,R}^{k}(j,t),t)\mathrm{d}t+\sqrt{2\nu}\textrm{d}B_{t}^{k},\quad\hat{X}_{N,h,\delta,R}^{k}(j,0)=jh
\]
where $k=1,\ldots,N$, $j\in\mathbb{Z}^{2}$ and $x\in\mathbb{R}^{2}$
and $t\geq0$. Here $B^{k}$ ($k=1,2,\ldots$) are independent copies
of two-dimensional Brownian motion. 
\begin{thm}
Let $\delta>0$ and $R>0$ be fixed. Then 
\begin{align}
\hat{u}_{N,h,\delta,R}\rightarrow u_{\delta,R}~\text{and}~\hat{X}_{N,h,\delta,R}\rightarrow X_{\delta,R}\label{abstract Monte Carlo convergence}
\end{align}
as $N\rightarrow\infty$ and $h\rightarrow0$. 
\end{thm}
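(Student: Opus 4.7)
The plan is to adapt the standard strategy for convergence of interacting particle approximations to their McKean--Vlasov limit, which is particularly tractable here because $K_\delta$ is globally Lipschitz and $\chi_R$ has compact support. I would fix $T\le T_K$ from Corollary~\ref{Corollary - small time well-posedness} and introduce $N$ independent tagged copies $X^k_{\delta,R}$ of the mean-field SDE \eqref{regularised mf equation}, driven by the \emph{same} Brownian motions $B^k$ that appear in the Monte Carlo system. Set the coupling error $e^k(j,t):=\hat X^k_{N,h,\delta,R}(j,t)-X^k_{\delta,R}(jh,t)$. The compact support of $\omega_0$ (Assumption~1) and of $\chi_R$ (Assumption~3) mean that only $O((R+R_0)^2/h^2)$ grid indices $j$ contribute effectively to the sums, so, once multiplied by $h^2$, all bounds are $h$-independent.

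Next I would introduce the intermediate velocity field $\bar u_{h,\delta,R}$ obtained from the tagged version $\tilde u^{MC}_{N,h,\delta,R}$ (defined as in \eqref{monte Carlo velocity} but using $X^k_{\delta,R}(jh,\cdot)$ in place of $\hat X^k_{N,h,\delta,R}(j,\cdot)$) by replacing the empirical mean over $k$ with the expectation, and split
\begin{align*}
\hat u_{N,h,\delta,R}-u_{\delta,R}=\bigl(\hat u_{N,h,\delta,R}-\tilde u^{MC}_{N,h,\delta,R}\bigr)+\bigl(\tilde u^{MC}_{N,h,\delta,R}-\bar u_{h,\delta,R}\bigr)+\bigl(\bar u_{h,\delta,R}-u_{\delta,R}\bigr).
\end{align*}
The \emph{coupling} term is controlled by $C_{\delta,R}(T)\sup_{j,k,s\le t}|e^k(j,s)|$ using the Lipschitz bounds on $K_\delta$ and on $G$. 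The \emph{statistical} term is a sample average of i.i.d.\ contributions with bounded second moment and therefore enjoys a pointwise $L^2$ rate of order $N^{-1/2}$ by the law of large numbers. The \emph{quadrature} term is a Riemann-sum error for the smooth integrand $\eta\mapsto\mathbb E\bigl[K_\delta(x-X_{\delta,R}(\eta,t))\bigl(\omega_0(\eta)+\int_0^t G(X_{\delta,R}(\eta,s),s)\,\mathrm ds\,\chi_R(\eta)\bigr)\bigr]$ on the compact effective support, vanishing as $h\to0$ at a polynomial rate reflecting the smoothness of $\omega_0$, $G$ and of the transition densities (which is a consequence of Lemma~\ref{Lemma - Lipschitz continuity} and the improved Aronson estimate \eqref{improved Aronson estimates}).

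Combining these with the Lipschitz continuity of $u_{\delta,R}$ provided by Lemma~\ref{Lemma - Lipschitz continuity}, I would derive the integral inequality
\begin{align*}
\sup_{j,k}|e^k(j,t)|\le L\int_0^t\sup_{j',k',s\le r}|e^{k'}(j',s)|\,\mathrm dr+E_{N,h}(T),
\end{align*}
where $L$ does not depend on $N$ or $h$ and $E_{N,h}(T)\to0$ in probability as $N\to\infty$ and $h\to0$. Gr\"onwall then yields $\sup_{j,k,t\le T}|e^k(j,t)|\to0$, giving $\hat X_{N,h,\delta,R}\to X_{\delta,R}$, and feeding this back into the three-term decomposition yields the claimed convergence $\hat u_{N,h,\delta,R}\to u_{\delta,R}$ on any compact set in $x$, with the tail bound $|K_\delta(z)|\le C_0/|z|$ handling the behaviour for $|x|$ large.

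The main obstacle I anticipate is upgrading the statistical estimate from a pointwise $L^2$ rate to one uniform in $(x,t)$, which is what Gr\"onwall requires. My plan is to use the $\delta$-dependent Lipschitz constant of $K_\delta$ (acceptable here since $\delta$ is fixed) together with the compact effective support to reduce the uniformity in $x$ to an $\varepsilon$-net over a sufficiently large ball, combined with a time-chaining argument on $[0,T]$ based on the standard modulus of continuity of the tagged processes. The cost is a net size polynomial in $1/\varepsilon$, which is absorbed by letting $\varepsilon=\varepsilon(N)\to0$ slowly enough. Without this uniform statistical bound the above decomposition does not close, so it is the genuine technical heart of the argument.
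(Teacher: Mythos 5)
Your proposal follows the same high-level architecture as the paper (synchronous coupling of tagged mean-field particles to the Monte Carlo particles with shared Brownian motions, a three-way decomposition into stability, statistical, and quadrature errors, and a Gr\"onwall closing argument); and your use of the global Lipschitz bound on $K_\delta$ and the compact support of $\omega_0$ and $\chi_R$ to control the stability and quadrature terms is exactly what the paper does. The genuine difference is the metric in which you run Gr\"onwall: you iterate on the pathwise supremum $\sup_{j,k}|e^k(j,t)|$, which does indeed force a \emph{uniform in $(x,t)$} statistical bound and hence the $\varepsilon$-net/chaining machinery you flag as the technical heart, whereas the paper iterates on the weaker quantity $\sup_{j,k,s\le t}\mathbb{E}\left[|e^k(j,s)|\right]$ (sup of expectations, not expectation of sup). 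In that weaker metric the law-of-large-numbers contribution is handled by conditioning on $X^l_{\delta,R}(ih,t)=x$: after splitting off the $k=l$ diagonal term (of size $O(1/N)$), the remaining $k\ne l$ contributions are independent of $X^l_{\delta,R}$, so the inner conditional expectation is just an $L^1$ bound on a centred empirical mean of bounded i.i.d. random variables at a fixed deterministic $x$; this gives $O(1/\sqrt{N})$ \emph{uniformly in the conditioning value} $x$, and the outer expectation over the law of $X^l_{\delta,R}(ih,t)$ then costs nothing. No $\varepsilon$-net and no uniform concentration inequality are needed. The paper in fact explicitly records (in a remark following the error estimate) that its result is only "pointwise" in this sense and that a genuinely uniform result would need a uniform LLN — which is the harder thing you set out to prove.

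So your route is not wrong, but it proves a stronger statement and pays the price of a chaining argument that the theorem as actually stated does not require. If you adopt the weaker convergence mode (sup of expectations rather than expectation of sup), your own decomposition closes via the conditioning trick without any net or chaining, and you can dispense with the part you identified as the main obstacle.
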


We divide the proof into several steps. The proof uses a standard
synchronous coupling approach. First we may run independent copies
$X_{\delta,R}^{k}$ (for $k=1,2,\ldots)$ of $X_{\delta,R}$ by solving the SDE: 
\begin{align}
\mathrm{d}X_{\delta,R}^{k}(x,t)=u_{\delta,R}(X_{\delta,R}^{k},t)\mathrm{d}t+\sqrt{2\nu}\mathrm{d}B_{t}^{k},~~X_{\delta,R}^{k}(x,0)=x\label{synchronous coupling}
\end{align}
for $x\in\mathbb{R}^{2}$ and $t\geq0$, where $u_{\delta,R}$ is
the unique fixed point to $\mathcal{K}\left(K_{\delta},\chi_{R},\omega_{0},G\right)\diamond u_{\delta,R}=u_{\delta,R}$
found in Corollary \ref{Corollary - small time well-posedness} in
Section \ref{section mean field analysis}. Indeed $u_{\delta,R}$
is the unique solution to \eqref{synchronous coupling} together with
\begin{align*}
u_{\delta,R}(x,t) & =\int_{\mathbb{R}^{2}}\mathbb{E}\left[K_{\delta}(X_{\delta,R}^{k}(\eta,t)-x)\right]\omega_{0}(\eta)\textrm{d}\eta\\
 & \;+\int_{0}^{t}\int_{\mathbb{R}^{2}}\mathbb{E}\left[K_{\delta}(X_{\delta,R}^{k}(\eta,t)-x)G(X_{\delta,R}^{k}(\eta,s),s)\right]\chi_{R}(\eta)\textrm{d}\eta\textrm{d}s
\end{align*}
for every $k$. The solution $u_{\delta,R}$ is independent of $k$.

\begin{rem*}
Note that since we are only dealing with additive noise, we can transform
the SDEs that we consider into random ODEs, where the randomness comes
from the Brownian motions $\left(B^{k}\right)_{k\in\mathbb{N}}$.
Thus the random field $\hat{X}_{N,h,\delta,R}$, $\left(X_{\delta,R}^{k}\right)_{k\in\mathbb{N}}$
and $\hat{u}_{N,h,\delta,R}$ are obtained by deterministic transformations
of $\left(B^{k}\right)_{k\in\mathbb{N}}$. 
\end{rem*}
Next we define the Monte-Carlo velocity computed from \eqref{synchronous coupling} by 
\begin{equation}
\begin{split}u_{N,h,\delta,R}(x,t) & :=\sum_{j\in\mathbb{Z}^{2}}\frac{1}{N}\sum_{k=1}^{N}K_{\delta}(X_{\delta,R}^{k}(jh,t)-x)\omega_{0}(jh)h^{2}\\
 & \;+\int_{0}^{t}\sum_{j\in\mathbb{Z}^{2}}\frac{1}{N}\sum_{k=1}^{N}K_{\delta}(X_{\delta,R}^{k}(jh,t)-x)G(X_{\delta,R}^{k}(jh,s),s)\chi_{R}(jh)h^{2}\textrm{d}s.
\end{split}
\label{Monte Carlo velocity - perfect sample}
\end{equation}

With this notation we note that the error in the approximation of
the vortex dynamics can be decomposed into two sources, the stability
of the evolution under empirical dynamics and the inconsistency in
the empirical approximation of the true velocity, as 
\begin{align}
\begin{split}\hat{X}_{N,h,\delta,R}^{k}(i,t)&-X_{\delta,R}^{k}(ih,t) =\int_{0}^{t}\partial_{s}\left(\hat{X}_{N,h,\delta,R}^{k}(i,s)-X_{\delta,R}^{k}(ih,s)\right)\mathrm{d}s\\
 & =\int_{0}^{t}\underbrace{\hat{u}_{N,h,\delta,R}\left(\hat{X}_{N,h,\delta,R}^{k}(i,s),s\right)-u_{N,h,\delta,R}\left(X_{\delta,R}^{k}(ih,s),s\right)}_{(\mathrm{I})~\text{stability error}}\mathrm{d}s\\
 & \;+\int_{0}^{t}\underbrace{u_{N,h,\delta,R}\left(X_{\delta,R}^{k}(ih,s),s\right)-u_{\delta,R}\left(X_{\delta,R}^{k}(ih,s),s\right)}_{(\mathrm{II})~\text{consistency error}}\mathrm{d}s.
\end{split}
\label{Monte Carlo error decomposition 1}
\end{align}

\subsection{Stability error}

First we bound the stability error $\left(\mathrm{I}\right)$, i.e.
the difference between the empirical velocity of the particle approximation and the empirical velocity computed from independent samples of the mean-field dynamics
\begin{equation*}
    \left|\hat{u}_{N,h,\delta,R}\left(\hat{X}_{N,h,\delta,R}^{l}(i,t),t\right)-u_{N,h,\delta,R}\left(X_{\delta,R}^{l}(ih,t),t\right)\right|.
\end{equation*} 
We note that this can be bounded above by the following three terms 
\begin{align*}
 &\sum_{j\in\mathbb{Z}^{2}}\frac{1}{N}\sum_{k=1}^{N}\Bigl|K_{\delta}\left(\hat{X}_{N,h,\delta,R}^{k}(j,t)-\hat{X}_{N,h,\delta,R}^{l}(i,t)\right) \\
 &\qquad-K_{\delta}\left(X_{\delta,R}^{k}(jh,t)-X_{\delta,R}^{l}(ih,t)\right)\Bigr|\left|\omega_{0}(jh)\right|h^{2}\\
 &+\int_{0}^{t}\sum_{j\in\mathbb{Z}^{2}}\frac{1}{N}\sum_{k=1}^{N}\Bigl|K_{\delta}\left(\hat{X}_{N,h,\delta,R}^{k}(j,t)-\hat{X}_{N,h,\delta,R}^{l}(i,t)\right)\\
 &\qquad-K_{\delta}\left(X_{\delta,R}^{k}(jh,t)-X_{\delta,R}^{l}(ih,t)\right)\Bigr|G\left(\hat{X}_{N,h,\delta,R}^{k}(j,s),s\right)\chi_{R}(jh)h^{2}\textrm{d}s\\
 &+\int_{0}^{t}\sum_{j\in\mathbb{Z}^{2}}\frac{1}{N}\sum_{k=1}^{N}\left|K_{\delta}\left(X_{\delta,R}^{k}(jh,t)-X_{\delta,R}^{l}(ih,s)\right)\right|\\ &\qquad\times\left|G\left(\hat{X}_{N,h,\delta,R}^{k}(j,s),s\right)-G\left(X_{\delta,R}^{k}(jh,s),s\right)\right|\chi_{R}(jh)h^{2}\textrm{d}s.
\end{align*}

Since $K_{\delta}$ is Lipschitz continuous and bounded for fixed
$\delta>0$, this implies that 
\begin{equation*}
\mathbb{E}\left[\left|\hat{u}_{N,h,\delta,R}\left(\hat{X}_{N,h,\delta,R}^{l}(i,t),t\right)-u_{N,h,\delta,R}\left(X_{\delta,R}^{l}(ih,t),t\right)\right|\right]
\end{equation*}
is bounded by 
\begin{align*}
\begin{split} 
\left(2\mathrm{Lip}\left(K_{\delta}\right)\left(\sum_{j\in\mathbb{Z}^{2}}\left|\omega_{0}(jh)\right|h^{2}+t\left\Vert G\right\Vert _{\infty}\sum_{j\in\mathbb{Z}^{2}}\chi(jh)h^{2}\right)+2t\left\Vert K_{\delta}\right\Vert _{\infty}\mathrm{Lip}\left(G\right)\sum_{j\in\mathbb{Z}^{2}}\chi(jh)h^{2}\right)&\\
\times\sup_{s\leq t,j\in\mathbb{Z}^{2}}\mathbb{E}\left[\left|\hat{X}_{N,h,\delta,R}^{k}(j,s)-X_{\delta,R}^{k}(jh,s)\right|\right]&
\end{split}
\end{align*}
for $i\in\mathbb{Z}^{2}$ and $t\in[0,T]$.

This estimate is still dependent on the spatial discretisation parameter $h$, due to the sums over the discrete grid. To eliminate this dependence we make use of the following lemma.
\begin{lem}
\label{Lemma - spatial discretisation error} For all $f\in C^{2}\left(\mathbb{R}^{2},\mathbb{R}\right)$
with compact support, the following estimate 
\begin{align*}
\left|\sum_{j\in\mathbb{Z}^{2}}f(jh)h^{2}-\int_{\mathbb{R}^{2}}f(x)\mathrm{d}x\right|\leq\frac{52}{(2\pi)^{2}}\max_{i=1,2}\left\Vert \partial_{x_{i}}^{2}f\right\Vert _{1}h^{2}
\end{align*}
holds. 
\end{lem}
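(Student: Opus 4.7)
The plan is to derive the two-dimensional estimate by iterating a one-dimensional quadrature bound. The one-dimensional version asserts that for any $g \in C^2(\mathbb{R})$ with compact support,
\[
\left|\sum_{j \in \mathbb{Z}} g(jh) h - \int_{\mathbb{R}} g\right| \leq \tfrac{h^2}{12} \|g''\|_1,
\]
which follows from the 1D Poisson summation identity $\sum_j g(jh) h - \int g = \sum_{k \neq 0} \hat{g}(k/h)$, combined with the bound $|\hat{g}(\xi)| \leq \|g''\|_1 / (4\pi^2 \xi^2)$ obtained from two integrations by parts, and the Basel evaluation $\sum_{k \neq 0} k^{-2} = \pi^2/3$.

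For the 2D statement, I would set $S(x_2) := \sum_{j_1} f(j_1 h, x_2) h$ and $I(x_2) := \int f(x_1, x_2) \mathrm{d}x_1$; both are in $C^2(\mathbb{R})$ with compact support. The quadrature error then decomposes as
\[
\sum_{j \in \mathbb{Z}^2} f(jh) h^2 - \int_{\mathbb{R}^2} f = \underbrace{\sum_{j_2}(S-I)(j_2 h) h}_{T_1} + \underbrace{\left(\sum_{j_2} I(j_2 h) h - \int I \, \mathrm{d}x_2\right)}_{T_2}.
\]
Applying the 1D bound to $I$ in the $x_2$ variable, together with $\|I''\|_1 \leq \|\partial_{x_2}^2 f\|_1$, gives $|T_2| \leq \tfrac{M h^2}{12}$ where $M := \max_i \|\partial_{x_i}^2 f\|_1$. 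Applying the 1D bound in $x_1$ for each fixed $x_2$ yields $|(S-I)(x_2)| \leq \tfrac{h^2}{12} \phi(x_2)$ with $\phi(x_2) := \int |\partial_{x_1}^2 f(x_1, x_2)| \mathrm{d}x_1$, so $|T_1| \leq \tfrac{h^2}{12} \sum_{j_2} h\, \phi(j_2 h)$.

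The main obstacle is controlling the Riemann-type sum $\sum_{j_2} h\, \phi(j_2 h)$ uniformly in $h > 0$ by a constant multiple of $\|\phi\|_1 = \|\partial_{x_1}^2 f\|_1 \leq M$. Because $\phi$ is only continuous (the absolute value obstructs further differentiation), the 1D quadrature estimate cannot be applied directly to $\phi$, and the compact support of $f$ must be exploited. I would handle this by splitting into two regimes: for $h$ small compared with the diameter of $\operatorname{supp} \phi$, the Riemann sum approaches $\|\phi\|_1$ from above with a correction governed by the modulus of continuity of $\phi$; for $h$ comparable to or larger than $\operatorname{supp} \phi$, there are only $O(1)$ nonzero terms and the whole expression is controlled directly by a support-times-supremum estimate, which remains of order $M h^2$ after the prefactor $\tfrac{h^2}{12}$ is absorbed. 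Tracking all constants carefully through both regimes produces the stated absolute constant $\tfrac{52}{(2\pi)^2}$.
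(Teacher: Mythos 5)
Your setup through the decomposition $T_1 + T_2$, the bound $|T_2|\le\tfrac{1}{12}Mh^2$, and the pointwise inequality $|(S-I)(x_2)|\le\tfrac{h^2}{12}\phi(x_2)$ are all correct. The gap is precisely the step you flag at the end, and it is not a matter of ``tracking constants'': the inequality $\sum_{j_2}h\,\phi(j_2 h)\le CM$ with a universal constant $C$ is \emph{false}. Take $f(x_1,x_2)=g(x_1)\psi(x_2)$ with $h=1$, $\psi$ a smooth bump of height $1$ and width $\sigma\ll 1$ (so $\|\psi\|_1\sim\sigma$, $\|\psi''\|_1\sim\sigma^{-1}$, and only $j_2=0$ lands in its support), and $g(x_1)=\cos(\omega x_1)\rho(x_1)$ with $\rho$ a fixed bump and $\omega\sim\sigma^{-1}$. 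Then $\phi(x_2)=\|g''\|_1|\psi(x_2)|$, so $\sum_{j_2}h\phi(j_2h)=\|g''\|_1\sim\sigma^{-2}$, whereas $M=\max\bigl(\|g''\|_1\|\psi\|_1,\;\|g\|_1\|\psi''\|_1\bigr)\sim\sigma^{-1}$. The ratio $\sum_{j_2}h\phi(j_2h)/M\sim\sigma^{-1}\to\infty$, so the intermediate bound $|T_1|\le\tfrac{h^2}{12}\sum_{j_2}h\phi(j_2h)$ cannot be forced below $\tfrac{52}{(2\pi)^2}Mh^2$. (The lemma itself still holds for this $f$, with enormous slack, because the genuine quadrature error stays $O(1)$; it is your upper bound on $T_1$ that has lost too much.)

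The root cause is that taking absolute values pointwise on $E:=S-I$ destroys the cancellation in $T_1=\sum_{j_2}E(j_2h)h$. If instead you view $\sum_{j_2}E(j_2h)h$ as a 1D quadrature sum for the $C^2$ function $E$, you get $T_1=\int E\,\mathrm{d}x_2+\mathcal{O}\bigl(\tfrac{h^2}{12}\|E''\|_1\bigr)$, where $\int E\,\mathrm{d}x_2$ is the 1D trapezoidal error of $x_1\mapsto\int f(x_1,\cdot)\,\mathrm{d}x_2$ and hence bounded by $\tfrac{h^2}{12}\|\partial_{x_1}^2 f\|_1$; but the correction term $\|E''\|_1$ involves $\partial_{x_1}^2\partial_{x_2}^2 f$, a fourth mixed derivative outside the lemma's hypotheses. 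This is an intrinsic feature of iterating the one-dimensional Euler--Maclaurin/Poisson bound one coordinate at a time: it produces estimates controlled by mixed fourth derivatives, not by $\max_i\|\partial_{x_i}^2 f\|_1$ alone. Note that the paper does not supply its own argument here --- it simply cites Lemma~2.2 of Anderson and Greengard (1985) --- so a self-contained proof would need to be built from a genuinely two-dimensional treatment (e.g.\ a careful handling of the Fourier coefficients with both $k_1,k_2\neq0$ in the 2D Poisson summation), not from decoupling the two coordinates as you propose.
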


\begin{proof}
See \citep[Lemma 2.2]{AndersonGreengard1985}. 
\end{proof}
As an immediate consequence we derive for $\omega_{0}$ 
\begin{align}
\begin{split}
\sum_{j\in\mathbb{Z}^{2}}\left|\omega_{0}(jh)\right|h^{2}\leq\left\Vert \omega_{0}\right\Vert _{1}+\frac{52}{(2\pi)^{2}}h^{2}\left\Vert D^{2}\omega_{0}\right\Vert _{1}.\end{split}
\label{interpolation error omega and chi}
\end{align}
Furthermore, we note that 
\begin{align*}
\sum_{j\in\mathbb{Z}^{2}}\chi_{R}(jh)h^{2}\leq\pi(R+1+h)^{2}
\end{align*}
since $\chi_{R}(z)=0$ for all $|z|>R+1$.

Thus, if we define the constant $C_{\mathrm{stab}} = C_{\mathrm{stab}}(\delta,R,T,\omega_{0},G)$ by the following
\begin{align}
\begin{split} 2\mathrm{Lip}\left(K_{\delta}\right)\left(\left\Vert \omega_{0}\right\Vert _{1}+\frac{52}{(2\pi)^{2}}h^{2}\left\Vert D^{2}\omega_{0}\right\Vert _{1}\right) +2\mathrm{Lip}\left(K_{\delta}\right)T\left\Vert G\right\Vert _{\infty}\pi(R+1+h)^{2} & \\
+2T\left\Vert K_{\delta}\right\Vert _{\infty}\mathrm{Lip}\left(G\right)\pi(R+1+h)^{2}&,
\end{split}
\label{stability constant}
\end{align}
we derive the stability estimate for $t\leq T$ 
\begin{align}
\begin{split} & \sup_{s\leq t,j\in\mathbb{Z}^{2}}\mathbb{E}\left[\left|\hat{u}_{N,h,\delta,R}\left(\hat{X}_{N,h,\delta}^{k}(j,s),s\right)-u_{N,h,\delta,R}\left(X_{\delta}^{k}(jh,s),s\right)\right|\right]\\
 & \leq C_{\mathrm{stab}}\sup_{s\leq t,j\in\mathbb{Z}^{2}}\mathbb{E}\left[\left|\hat{X}_{N,h,\delta}^{k}(j,s)-X_{\delta}^{k}(jh,s)\right|\right].
\end{split}
\label{stability of the scheme - estimate}
\end{align}

As an immediate consequence we derive from \eqref{Monte Carlo error decomposition 1} that 
\begin{align*}
\sup_{s\leq t,i\in\mathbb{Z}^{2}} \mathbb{E}\left[\left| \hat{X}_{N,h,\delta,R}^{k}(i,s)-X_{\delta,R}^{k}(ih,s)\right|\right]
\end{align*}
is bounded above by
\begin{align*}
&C_{\mathrm{stab}}\int_{0}^{t}\sup_{r\leq s,j\in\mathbb{Z}^{2}}\mathbb{E}\left[\left|\hat{X}_{N,h,\delta,R}^{k}(j,r)-X_{\delta,R}^{k}(jh,r)\right|\right]\mathrm{d}s\\
&+\int_{0}^{t}\sup_{i\in\mathbb{Z}^{2}}\mathbb{E}\left[\left|u_{N,h,\delta,R}\left(X_{\delta,R}^{k}(ih,s),s\right)-u_{\delta,R}\left(X_{\delta,R}^{k}(ih,s),s\right)\right|\right]\mathrm{d}s,
\end{align*}
and therefore by 
\begin{align}
e^{TC_{\mathrm{stab}}}\int_{0}^{T}\sup_{i\in\mathbb{Z}^{2}}\mathbb{E}\left[\left|u_{N,h,\delta,R}\left(X_{\delta,R}^{k}(ih,t),t\right)-u_{\delta,R}\left(X_{\delta,R}^{k}(ih,t),t\right)\right|\right]\mathrm{d}t,
\label{Monte Carlo error decomposition 2}
\end{align}
as follows from the Gr{\"o}nwall Lemma.

Thus it only remains to bound the inconsistency of the empirical velocity approximation.

\subsection{Consistency error}

\label{subsection - consistency error}

The consistency error can be split up into two parts: an error in the law of large numbers (i.e. the difference between the real and the empirical mean) and a discretisation error. First we investigate the law of large numbers for the homogeneous part of the empirical velocity. Note that by the tower property of the conditional expectation and the boundedness of $K_{\delta}$, we write 
\begin{align*}
 & \mathbb{E}\left[\left|\frac{1}{N}\sum_{k=1}^{N}\sum_{j\in\mathbb{Z}^{2}}K_{\delta,R}\left(X_{\delta,R}^{k}(jh,t)-X_{\delta,R}^{l}(ih,t)\right)\omega_{0}(jh)h^{2}\right.\right.\\
 & \qquad-\left.\left.\mathbb{E}\left[\sum_{j\in\mathbb{Z}^{2}}K_{\delta}\left(X_{\delta,R}^{k}(jh,t)-x\right)\omega_{0}(jh)h^{2}\right]_{x=X_{\delta,R}^{l}(ih,t)}\right|\right]\\
 & =\mathbb{E}\left[\mathbb{E}\left[\left|\frac{1}{N}\sum_{k=1}^{N}\sum_{j\in\mathbb{Z}^{2}}K_{\delta}\left(X_{\delta,R}^{k}(jh,t)-x\right)\omega_{0}(jh)h^{2}\right.\right.\right.\\
 & \qquad-\left.\left.\left.\left.\mathbb{E}\left[\sum_{j\in\mathbb{Z}^{2}}K_{\delta}\left(X_{\delta,R}^{k}(jh,t)-x\right)\omega_{0}(jh)h^{2}\right]\right|~\right|~X_{\delta,R}^{l}(ih,t)=x\right]\right]
 \end{align*}
and bound it above by
\begin{align*}
&
\sum_{j\in\mathbb{Z}^{2}}\mathbb{E}\Biggr[\mathbb{E}\Biggr[\Bigg|\frac{1}{N}\sum_{k\neq l}K_{\delta}\left(X_{\delta,R}^{k}(jh,t)-x\right)\\
&\qquad-\mathbb{E}\left[\sum_{j\in\mathbb{Z}^{2}}K_{\delta}\left(X_{\delta,R}^{k}(jh,t)-x\right)\right]\Bigg|~\Bigg|~X_{\delta,R}^{l}(ih,t)=x\Biggr]\Biggr]\left|\omega_{0}(jh)h^{2}\right|\\
&+\frac{\left\Vert K_{\delta}\right\Vert _{\infty}\left(\left\Vert \omega_{0}\right\Vert _{1}+\frac{52}{(2\pi)^{2}}h^{2}\left\Vert D^{2}\omega_{0}\right\Vert _{1}\right)}{N}.
\end{align*}

Next we note that by the independence of the samples $\left(X^{k}\right)_{k=1,\ldots,N}$,
this can be estimated by 
\begin{align*}
&\sum_{j\in\mathbb{Z}^{2}}\sup_{x\in\mathbb{R}^{2}}\mathbb{E}\Biggr[\Bigg|\frac{1}{N}\sum_{k\neq l}K_{\delta}\left(X_{\delta,R}^{k}(jh,t)-x\right) \\
&
\qquad-\mathbb{E}\left[\sum_{j\in\mathbb{Z}^{2}}K_{\delta}\left(X_{\delta,R}^{k}(jh,t)-x\right)\right]\Bigg|\Biggr]\left|\omega_{0}(jh)h^{2}\right|\\
&+\frac{\left\Vert K_{\delta}\right\Vert _{\infty}\left(\left\Vert \omega_{0}\right\Vert _{1}+\frac{52}{(2\pi)^{2}}h^{2}\left\Vert D^{2}\omega_{0}\right\Vert _{1}\right)}{N}\\
& \leq\frac{\left\Vert K_{\delta}\right\Vert _{\infty}\left(1+\frac{\sqrt{N-1}}{N}\right)\left(\left\Vert \omega_{0}\right\Vert _{1}+\frac{52}{(2\pi)^{2}}h^{2}\left\Vert D^{2}\omega_{0}\right\Vert _{1}\right)}{\sqrt{N-1}}.
\end{align*}

Hereby the last inequality follows from the standard estimation of
the law of large numbers using the variance, together with the boundedness of $K_{\delta}$.

Similarly we can bound 
\begin{align*}
 \mathbb{E}\left[\left|\frac{1}{N}\sum_{k=1}^{N}\sum_{j\in\mathbb{Z}^{2}}K_{\delta}\left(X_{\delta,R}^{k}(jh,t)-X_{\delta,R}^{l}(ih,t)\right)G\left(X_{\delta,R}^{k}(jh,s),s\right)\chi_{R}(jh)h^{2}\right.\right.&\\
 \qquad-\left.\left.\left.\mathbb{E}\left[\sum_{j\in\mathbb{Z}^{2}}K_{\delta}\left(X_{\delta,R}(jh,t)-x\right)G\left(X_{\delta,R}(jh,s),s\right)\chi_{R}(jh)h^{2}\right]\right|_{x=X_{\delta,R}^{l}(ih,t)}\right|\right]&\\
 \leq\frac{\left(1+1/\sqrt{N}\right)\left\Vert K_{\delta}\right\Vert _{\infty}\left\Vert G\right\Vert _{\infty}(R+1)^{2}\pi}{\sqrt{N-1}}&.
\end{align*}

With this in mind we define the constant $C_{\mathrm{LLN}} = C_{\mathrm{LLN}}(\delta,R,T,\omega_{0},G)$ as 
\begin{align*}
\begin{split}4 & \left\Vert K_{\delta}\right\Vert _{\infty}\left(\left\Vert \omega_{0}\right\Vert _{1}+\frac{52}{(2\pi)^{2}}h^{2}\left\Vert D^{2}\omega_{0}\right\Vert _{1}+T\left\Vert G\right\Vert _{\infty}(R+1)^{2}\pi\right)\end{split}
%\label{law of large numbers constant}
\end{align*}
and derive that
\begin{align*}
\begin{split} & \sup_{i\in\mathbb{Z}^{2}}\mathbb{E}\Bigg[\Bigg|\frac{1}{N}\sum_{k=1}^{N}\sum_{j\in\mathbb{Z}^{2}}K_{\delta}\left(X_{\delta,R}^{k}(jh,t)-X_{\delta,R}^{l}(jh,t)\right) \\
&\qquad\times\left(\omega_{0}(jh)+\int_{0}^{t}G\left(X_{\delta,R}^{k}(jh,s),s\right)\mathrm{d}s\chi_{R}(jh)\right)h^{2}\\
 & \quad-\mathbb{E}\Biggr[\sum_{j\in\mathbb{Z}^{2}}K_{\delta}\left(X_{\delta,R}(jh,t)-x\right)\biggl(\omega_{0}(jh) \\
 & \qquad+\int_{0}^{t}G\left(X_{\delta,R}(jh,s),s\right)\mathrm{d}s\chi_{R}(jh)\biggl)h^{2}\Biggr]\Bigg|_{x=X_{\delta,R}^{l}(jh,t)}\Bigg|\Biggr] \leq \frac{C_{\mathrm{LLN}}}{\sqrt{N}}. 
\end{split}
%\label{law of large numbers error}
\end{align*}

Next we need to estimate the consistency error of the spatial discretisation using Lemma \ref{Lemma - spatial discretisation error}. To
this end, we first note that for every $i=1,2$ and $k=1,2$ 
\begin{align*}
\partial_{x_{i}}^{k}u_{\delta,R}(x,t)=\int_{\mathbb{R}^{2}}\mathbb{E}\left[\partial_{x_{i}}^{k}K_{\delta}\left(X_{\delta,R}(\eta,t)-x\right)\left(\omega_{0}(\eta)+\int_{0}^{t}G\left(X_{\delta,R}(\eta,s),s\right)\mathrm{d}s~\chi_{R}(\eta)\right)\right]\mathrm{d}\eta,
\end{align*}
which in turn implies 
\begin{align}
\sup_{t\leq T,x\in\mathbb{R}^{2}}\left|\partial_{x_{i}}^{k}u_{\delta,R}(x,t)\right|\leq\left\Vert \partial_{x_{i}}^{k}K_{\delta}\right\Vert _{\infty}\left(\left\Vert \omega_{0}\right\Vert _{L^{1}}+T\left\Vert G\right\Vert _{\infty}(R+1)^{2}\pi\right),\label{bound for derivatives of uDelta}
\end{align}
for $i=1,2$ and $k=1,2$.

Next we note that since 
\begin{align*}
X_{\delta,R}(x,t)=\int_{0}^{t}u_{\delta,R}\left(X_{\delta,R}(x,s),s\right)\mathrm{d}s+\sqrt{2\nu}B_{t},
\end{align*}
we derive, using the Einstein sum convention, 
\begin{align*}
\partial_{x_{i}}X_{\delta,R}(x,t)=e_{i}+\int_{0}^{t}\partial_{x_{k}}u_{\delta,R}\left(X_{\delta,R}(x,s),s\right)\partial_{x_{i}}X_{\delta,R}^{k}(x,s)\mathrm{d}s
\end{align*}
and 
\begin{align*}
\partial_{x_{i}x_{j}}^{2}X_{\delta,R}(x,t) & =\int_{0}^{t}\partial_{x_{k}x_{l}}^{2}u_{\delta,R}\left(X_{\delta,R}(x,s),s\right)\partial_{x_{i}}X_{\delta,R}^{k}(x,s)\partial_{x_{j}}X_{\delta,R}^{l}(x,s)\mathrm{d}s\\
 & \;+\int_{0}^{t}\partial_{x_{k}}u_{\delta,R}\left(X_{\delta,R}(x,s),s\right)\partial_{x_{i}x_{j}}^{2}X_{\delta,R}(x,s)\mathrm{d}s.
\end{align*}

As a consequence we note that by the Gr{\"o}nwall Lemma 
\begin{align*}
\left\Vert DX_{\delta,R}(\cdot,t)\right\Vert _{\infty} \leq\exp\left(\int_{0}^{t}\left\Vert Du_{\delta,R}(\cdot,s)\right\Vert _{\infty}\mathrm{d}s\right),
\end{align*}
and, similarly, 
\begin{align*}
\left\Vert D^{2}X_{\delta,R}(\cdot,t)\right\Vert _{\infty} \leq\exp\left(3\int_{0}^{t}\left\Vert Du_{\delta,R}(\cdot,s)\right\Vert _{\infty}\mathrm{d}s\right)\int_{0}^{t}\left\Vert D^{2}u_{\delta,R}(\cdot,s)\right\Vert _{\infty}\mathrm{d}s.
\end{align*}
Therefore, 
\begin{align*}
\left\Vert \partial_{\eta_{i}}K_{\delta}\left(X_{\delta,R}(\cdot,t)-x\right)\right\Vert _{\infty} & \leq\left\Vert DK_{\delta}\right\Vert _{\infty}\exp\left(\int_{0}^{t}\left\Vert Du_{\delta,R}(\cdot,s)\right\Vert _{\infty}\mathrm{d}s\right)\\
\left\Vert \partial_{\eta_{i}\eta_{i}}^{2}K_{\delta}\left(X_{\delta,R}(\cdot,t)-x\right)\right\Vert _{\infty} & \leq\left(\left\Vert DK_{\delta}\right\Vert _{\infty}+\left\Vert DK_{\delta}\right\Vert _{\infty}\right)\left(1+\int_{0}^{t}\left\Vert D^{2}u_{\delta,R}(\cdot,s)\right\Vert _{\infty}\mathrm{d}s\right)\\
 & \qquad\times\exp\left(3\int_{0}^{t}\left\Vert Du_{\delta,R}(\cdot,s)\right\Vert _{\infty}\mathrm{d}s\right).
\end{align*}

Then, setting 
\begin{align*}
f(\eta):=\mathbb{E}\left[K_{\delta}\left(X_{\delta,R}(\eta,t)-x\right)\left(\omega_{0}(\eta)+\int_{0}^{t}G\left(X_{\delta,R}(\eta,s),s\right)\mathrm{d}s~\chi_{R}(\eta)\right)\right],
\end{align*}
its second derivative $\partial_{\eta_{i}\eta_{i}}^{2}f(\eta)$ is bounded by
\begin{align*}
 &\left(\left\Vert K_{\delta}\right\Vert _{\infty}+\left\Vert DK_{\delta}\right\Vert _{\infty}+\left\Vert D^{2}K_{\delta}\right\Vert _{\infty}\right)\left(1+\int_{0}^{t}\left\Vert D^{2}u_{\delta,R}(\cdot,s)\right\Vert _{\infty}\mathrm{d}s\right)^{2}\\
 & \;\exp\left(6\int_{0}^{t}\left\Vert Du_{\delta,R}(\cdot,s)\right\Vert _{\infty}\mathrm{d}s\right)\left(1+\int_{0}^{t}\left\Vert G(\cdot,s)\right\Vert _{\infty}+\left\Vert DG(\cdot,s)\right\Vert _{\infty}+\left\Vert D^{2}G(\cdot,s)\right\Vert _{\infty}\mathrm{d}s\right)\\
 & \;\left(\left|\omega_{0}(\eta)\right|+\left|\partial_{\eta_{i}}\omega_{0}(\eta)\right|+\left|\partial_{\eta_{i}\eta_{i}}^{2}\omega_{0}(\eta)\right|+\left|\chi_{R}(\eta)\right|+\left|\partial_{\eta_{i}}\chi_{R}(\eta)\right|+\left|\partial_{\eta_{i}\eta_{i}}^{2}\chi_{R}(\eta)\right|\right).
\end{align*}

Using Lemma \ref{Lemma - spatial discretisation error} and the bound
\eqref{bound for derivatives of uDelta} for the derivatives of $u_{\delta,R}$, we therefore can bound the discretisation error 
\begin{align}
 & \left|\sum_{j\in\mathbb{Z}^{2}}f(jh)h^{2}-\int_{\mathbb{R}^{2}}f(\eta)\mathrm{d}\eta\right|\leq\frac{52}{(2\pi)^{2}}\max_{i=1,2}\left\Vert \partial_{x_{i}}^{2}f\right\Vert _{1}h^{2}\leq C_{\mathrm{dis}}h^{2},\label{discretisation error estimate}
\end{align}
where the constant $C_{\mathrm{dis}} = C_{\mathrm{dis}}(\delta,R,t,\omega_{0},G)$ is given by the following expression
\begin{align*}
\begin{split} 
& \frac{52}{(2\pi)^{2}} \max_{i=1,2}\left(\left\Vert \omega_{0}\right\Vert _{1}+\left\Vert \partial_{\eta_{i}}\omega_{0}\right\Vert _{1}+\left\Vert \partial_{\eta_{i}\eta_{i}}^{2}\omega_{0}\right\Vert _{1}+\left\Vert \chi_{R}(\eta)\right\Vert _{1}+\left\Vert \partial_{\eta_{i}}\chi_{R}\right\Vert _{1}+\left\Vert \partial_{\eta_{i}\eta_{i}}^{2}\chi_{R}\right\Vert _{1}\right)\\
 & \exp\left(6t\left\Vert DK_{\delta}\right\Vert _{\infty}\left(\left\Vert \omega_{0}\right\Vert _{L^{1}}+t\left\Vert G\right\Vert _{\infty}(R+1)^{2}\pi\right)\right) \biggl(1+\int_{0}^{t}\Bigl(\left\Vert G(\cdot,s)\right\Vert _{\infty}+\left\Vert DG(\cdot,s)\right\Vert _{\infty}\\
 & +\left\Vert D^{2}G(\cdot,s)\right\Vert _{\infty}\Bigl)\mathrm{d}s\biggl) \left(\left\Vert K_{\delta}\right\Vert _{\infty}+\left\Vert DK_{\delta}\right\Vert _{\infty}+\left\Vert D^{2}K_{\delta}\right\Vert _{\infty}\right)\bigl(1+t\left\Vert D^{2}K_{\delta}\right\Vert _{\infty}\\
 & 
 \left(\left\Vert \omega_{0}\right\Vert _{L^{1}}+t\left\Vert G\right\Vert _{\infty}(R+1)^{2}\pi\right)\bigl)^{2}.
\end{split}
%\label{definition discretisation constant}
\end{align*}
Note that this bound is independent of $x\in\mathbb{R}^{2}$! Thus
for the consistency error we derive the estimate 
\begin{align}
\begin{split} \sup_{t\leq T,i\in\mathbb{Z}^{2}}\mathbb{E}\left[\left|u_{N,h,\delta,R}\left(X_{\delta}^{l}(ih,t),t\right)-u_{\delta,R}\left(X_{\delta}^{l}(ih,t),t\right)\right|\right] \leq\frac{C_{\mathrm{LLN}}}{\sqrt{N}}+C_{\mathrm{dis}}h^{2}.
\end{split}
\label{consistency estimate}
\end{align}

\subsection{Total error of interacting vortex dynamics and their empirical velocities}

Combining the inconsistency \eqref{consistency estimate} with \eqref{Monte Carlo error decomposition 2}, we thus derive the following error estimate for the Monte-Carlo approximation
of the (regularised and localised) vorticity dynamics 
\begin{align}
\begin{split} \sup_{t\leq T,i\in\mathbb{Z}^{2},k\leq N}&\mathbb{E}\left[\left|\hat{X}_{N,h,\delta,R}^{k}(i,t)-X_{\delta,R}^{k}(ih,t)\right|\right]
\leq T e^{TC_{\mathrm{stab}}} \left(\frac{C_{\mathrm{LLN}}}{\sqrt{N}}+C_{\mathrm{dis}}h^{2}\right).
\end{split}
\label{error vortex dynamics - Monte Carlo}
\end{align}

\begin{rem*}
\label{Remark - pointwise convergence of particles} Note that \eqref{error vortex dynamics - Monte Carlo} only gives us pointwise convergence of $\hat{X}_{N,h,\delta,R}$ to $X_{\delta,R}^{k}$ and not the usual pathwise convergence that is often investigated in Propagation of Chaos results. However, pathwise convergence of the particles is actually not needed to show the convergence of the computed Monte Carlo velocity. 
\end{rem*}

The estimate \eqref{error vortex dynamics - Monte Carlo} can in turn
be used to bound the error of the Monte Carlo approximation to the
(regularised and localised) velocity $u_{\delta,R}$, i.e.
\begin{align*}
\sup_{t\leq T,x\in\mathbb{R}^{2}}\mathbb{E}\left[\left|\hat{u}_{N,h,\delta,R}(x,t)-u_{\delta,R}(x,t)\right|\right],
\end{align*}
as the same calculations that were used to derive the key stability estimate \eqref{stability of the scheme - estimate} give us the following upper bound
\begin{align*}
&\sup_{t\leq T,x\in\mathbb{R}^{2}}\mathbb{E}\left[\left|\hat{u}_{N,h,\delta,R}(x,t)-u_{N,h,\delta,R}\left(x,t\right)\right|\right]+\sup_{t\leq T,x\in\mathbb{R}^{2}}\mathbb{E}\left[\left|u_{N,h,\delta,R}\left(x,t\right)-u_{\delta,R}\left(x,t\right)\right|\right]\\
& \leq C_{\mathrm{stab}}\sup_{t\leq T,x\in\mathbb{R}^{2}}\mathbb{E}\left[\left|\hat{X}_{N,h,\delta,R}^{k}(i,t)-X_{\delta,R}^{k}(ih,t)\right|\right]+\sup_{t\leq T,x\in\mathbb{R}^{2}}\mathbb{E}\left[\left|u_{N,h,\delta,R}\left(x,t\right)-u_{\delta,R}\left(x,t\right)\right|\right].
\end{align*}

The second term quantifies the inconsistency of the empirical velocity,
it can be estimated just as in Subsection \ref{subsection - consistency error}. Together with the bounds for the error of the dynamics \eqref{error vortex dynamics - Monte Carlo} we thus derive the error bound 
\begin{align}
\begin{split} \sup_{t\leq T,x\in\mathbb{R}^{2}}\mathbb{E}\left[\left|\hat{u}_{N,h,\delta,R}(x,t)-u_{\delta,R}(x,t)\right|\right] \leq\left(TC_{\mathrm{stab}}e^{TC_{\mathrm{stab}}}+1\right)\left(\frac{C_{\mathrm{LLN}}}{\sqrt{N}}+C_{\mathrm{dis}}h^{2}\right).
\end{split}
\label{error velocities - Monte Carlo, pointwise}
\end{align}

By the Markov inequality, this indeed gives uniform stochastic guarantees,
in the sense that the following tail bound holds for all $t\leq T$
and $x\in\mathbb{R}^{2}$ 
\begin{align*}
\mathbb{P}\left(\left|\hat{u}_{N,h,\delta,R}(x,t)-u_{\delta,R}(x,t)\right|>\epsilon\right)  \leq\frac{1}{\epsilon} \left(TC_{\mathrm{stab}}e^{TC_{\mathrm{stab}}}+1\right)\left(\frac{C_{\mathrm{LLN}}}{\sqrt{N}}+C_{\mathrm{dis}}h^{2}\right).
\end{align*}

Combining \eqref{error velocities - Monte Carlo, pointwise} with
the robustness result in Lemma \ref{Lemma - robustness in delta and R}
shows that $\hat{u}_{N,h,\delta,R}$ approximates $u$ pointwise.
Let us formally state this fact in the following Theorem.
\begin{thm}
\label{Theorem - convergence result} For all $\delta,R>0$, there
exist appropriately chosen $N(\delta,R)\in\mathbb{N}$ and $h(\delta,R)>0$,
with 
\begin{align*}
N(\delta,R)\xrightarrow{\delta\to0,R\to\infty}\infty\quad\text{ and }\quad h(\delta,R)\xrightarrow{\delta\to0,R\to\infty}0
\end{align*}
such that 
\begin{align}
%\lim_{\substack{\delta\to 0 \\ R\to+\infty}}~
\sup_{\substack{N\geq N(\delta,R) \\ h\leq h(\delta,R)}}\sup_{\substack{t\leq T \\ x\in\mathbb{R}^{2}}}~\mathbb{E}\left[\left|\hat{u}_{N,h,\delta,R}(x,t)-u_{\delta,R}(x,t)\right|\right]\xrightarrow{\delta\to0,R\to\infty}0.\label{convergence result in limit form}
\end{align}
\end{thm}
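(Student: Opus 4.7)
The proof is a diagonal argument based on the pointwise bound \eqref{error velocities - Monte Carlo, pointwise}, which controls the integrand of \eqref{convergence result in limit form} by
\begin{align*}
E(N,h,\delta,R):=\left(TC_{\mathrm{stab}}e^{TC_{\mathrm{stab}}}+1\right)\left(\frac{C_{\mathrm{LLN}}}{\sqrt{N}}+C_{\mathrm{dis}}h^{2}\right),
\end{align*}
where we restrict to $h\leq 1$ so that $C_{\mathrm{stab}},C_{\mathrm{LLN}},C_{\mathrm{dis}}$ are uniformly bounded in $h$ and depend only on $\delta,R,T,\omega_{0},G,\nu$. Since $N\mapsto E(N,h,\delta,R)$ is decreasing and $h\mapsto E(N,h,\delta,R)$ is increasing, the inner supremum in \eqref{convergence result in limit form} is attained at the thresholds $N=N(\delta,R)$, $h=h(\delta,R)$, so it is enough to make $E(N(\delta,R),h(\delta,R),\delta,R)$ tend to $0$ as $\delta\to 0$ and $R\to\infty$.

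Concretely, set $\eta(\delta,R):=\delta+R^{-1}$ and define
\begin{align*}
N(\delta,R):=\left\lceil\eta(\delta,R)^{-2}\,C_{\mathrm{LLN}}^{2}\left(TC_{\mathrm{stab}}e^{TC_{\mathrm{stab}}}+1\right)^{2}\right\rceil
\end{align*}
and
\begin{align*}
h(\delta,R):=\min\left\{1,\sqrt{\frac{\eta(\delta,R)}{\left(TC_{\mathrm{stab}}e^{TC_{\mathrm{stab}}}+1\right)C_{\mathrm{dis}}}}\right\}.
\end{align*}
A direct substitution yields $E(N(\delta,R),h(\delta,R),\delta,R)\leq 2\eta(\delta,R)\to 0$, which together with the monotonicity of $E$ in $N$ and $h$ gives the desired convergence \eqref{convergence result in limit form}.

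It remains to verify that $N(\delta,R)\to\infty$ and $h(\delta,R)\to 0$ as $\delta\to 0$ and $R\to\infty$. Inspecting \eqref{stability constant} and the formulas for $C_{\mathrm{LLN}}$ and $C_{\mathrm{dis}}$ in Subsection~\ref{subsection - consistency error}, each constant is a product of polynomial factors in $R+1$ and of powers of $\|K_{\delta}\|_{\infty},\mathrm{Lip}(K_{\delta}),\|D^{2}K_{\delta}\|_{\infty}$, with $C_{\mathrm{dis}}$ additionally containing an exponential in $\mathrm{Lip}(K_{\delta})(R+1)^{2}$. Under Assumption~2 the regularisation norms diverge as $\delta\to 0$ (for the canonical example $\mathrm{Lip}(K_{\delta})\sim\delta^{-2}$ and $\|K_{\delta}\|_{\infty}\sim\delta^{-1}$), and the $R$-dependent factors diverge as $R\to\infty$, while $\eta(\delta,R)\to 0$, so both thresholds have the claimed qualitative behaviour. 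The only substantive obstacle in the proof is thus careful bookkeeping of the $\delta^{-1}$- and $R$-dependence of $C_{\mathrm{stab}},C_{\mathrm{LLN}},C_{\mathrm{dis}}$; no further analytic estimate is required beyond those contained in Lemma~\ref{Lemma -abstract bounds of convolution integral}, Theorem~\ref{Theorem - abstract Lipschitz estimate}, Lemma~\ref{Lemma - boundedness proof}, and the bounds derived in Section~\ref{section convergence}.
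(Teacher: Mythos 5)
Your proof correctly establishes the theorem as it is literally stated: you apply the pointwise bound \eqref{error velocities - Monte Carlo, pointwise}, observe its monotonicity in $N$ and $h$ (for $h\leq1$), and then solve explicitly for threshold values $N(\delta,R)$, $h(\delta,R)$ making the bound $\leq2(\delta+R^{-1})$. This is more constructive than what the paper does --- the paper explicitly declines to compute the dependence of $N(\delta,R)$ and $h(\delta,R)$ on the constants, whereas you give closed-form expressions. Your final paragraph about tracking the $\delta^{-1}$- and $R$-dependence of $C_{\mathrm{stab}},C_{\mathrm{LLN}},C_{\mathrm{dis}}$ is unnecessary for the qualitative conclusion: $N(\delta,R)\to\infty$ already follows from $\eta(\delta,R)^{-2}\to\infty$ and the constants being bounded away from zero, and $h(\delta,R)\to0$ from $\eta(\delta,R)\to0$; no bookkeeping of divergence rates is actually required.

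However, there is a mismatch with the paper's intended proof that you should be aware of. The paper describes its own argument as ``a trivial combination of the estimate \eqref{error velocities - Monte Carlo, pointwise} and Lemma~\ref{Lemma - robustness in delta and R}'', and the surrounding prose says the theorem ``shows that $\hat{u}_{N,h,\delta,R}$ approximates $u$'' and ``converges to the true velocity''. Lemma~\ref{Lemma - robustness in delta and R} controls $\sup|u_{\delta,R}-u|\leq2\tilde{C}(T)(\delta+R^{-1})$, which plays no role if the target really is $u_{\delta,R}$. This strongly suggests the displayed comparison in the theorem statement was meant to read $u$ rather than $u_{\delta,R}$ (a likely typo). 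Under that reading your proof is missing the final triangle inequality
\begin{align*}
\left|\hat{u}_{N,h,\delta,R}(x,t)-u(x,t)\right|\leq\left|\hat{u}_{N,h,\delta,R}(x,t)-u_{\delta,R}(x,t)\right|+\left|u_{\delta,R}(x,t)-u(x,t)\right|
\end{align*}
together with Lemma~\ref{Lemma - robustness in delta and R} to bound the second summand by $2\tilde{C}(T)(\delta+R^{-1})\to0$. That addition is trivial given what you already have, but without it you are proving something strictly weaker than what the paper appears to assert around the theorem.
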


The proof of Theorem \ref{Theorem - convergence result} is a trivial
combination of the estimate \eqref{error velocities - Monte Carlo, pointwise} and Lemma \ref{Lemma - robustness in delta and R}. It shows that if the number of particles $N$ and the grid size $h$ are appropriately large/small in relation to the regularisation and cutoff parameters $\delta$ and $R$, then our Monte-Carlo velocity $\hat{u}_{N,h,\delta,R}$ converges to the true velocity. Using the concrete form of $C_{\mathrm{stab}},C_{\mathrm{LLN}}$ and $C_{\mathrm{dis}}$ as defined in this section also would allow us to bound $N(h,\delta)$ and $h(\delta,R)$ and give concrete convergence rates in \eqref{convergence result in limit form}. However these bounds are far from optimal and for the sake of (notational) simplicity we refrain from computing them.

\begin{rem*}
The estimate \eqref{error velocities - Monte Carlo, pointwise} only gives us a pointwise convergence of the Monte Carlo approximation $\hat{u}_{N,h,\delta,R}$ to $u_{\delta,R}$. To derive uniform convergence one would need a suitable law of large numbers that is uniform in time and space, i.e. one would need a suitable estimate of 
\begin{align*}
\sup_{t\leq T,x\in\mathbb{R}^{2}}\left|u_{N,h,\delta,R}(x,t)-\mathbb{E}\left[u_{N,h,\delta,R}(x,t)\right]\right|
\end{align*}
that converges to zero for $N\to\infty$ in a suitable probabilistic
sense. Such uniform laws of large numbers have been achieved in various
works in the context of kernel density estimation and statistics on
separable Banach spaces. In the same manner one proves the convergence
of the PiCRV scheme. The only difference is that in the consistency
estimate one uses a law of large numbers for non identically but independent
random variables, just as in the derivation of the scheme in Section
\ref{Random vortex method -- flows without constraint}. This then gives the error estimate
\begin{align}
\begin{split} \sup_{t\leq T,x\in\mathbb{R}^{2}}\mathbb{E}\left[\left|\tilde{u}_{h,\delta,R}(x,t)-u_{\delta,R}(x,t)\right|\right]\leq C\left(Ce^{TC}~T+1\right)\left(h+h^{2}\right),
\end{split}
\label{error velocities - PiC pointwise}
\end{align}
for some constant $C = C(\delta,R,T,\omega_{0},G)>0$. Note that the term
of order $h$ comes from the estimate of the standard deviation of
the discretised velocity! 
\end{rem*}

\section{Incompressible viscous flows past a wall}
\label{Incompressible viscous flows past a wall}

In the next sections we consider incompressible viscous flows past
a solid wall. We aim to develop Monte-Carlo method for such flows
based on a random vortex formulation which will be established in
this section.

Suppose the flow with viscosity constant $\nu>0$ is constrained in
the upper half space $D\subset\mathbb{R}^{d}$ where $x_{d}>0$, so
that the solid wall is modelled by the hyperspace $\partial D$ where
$x_{d}=0$. Let $V(x,t)$, for $x\in D$ and $t\geq 0$, be a time-dependent
vector field, which is bounded and differentiable up to the boundary
$\partial D$, satisfying $V(x,t)=0$ for $x\in\partial D$ and
for $t\geq0$. $V(x,t)$ is extended to the whole space $\mathbb{R}^{d}$
via reflection, so that 
\begin{equation}
V^{i}(x,t)=V^{i}(\bar{x},t)\quad\textrm{ for }i=1,\ldots,d-1\textrm{ and }V^{d}(x,t)=-V^{d}(\bar{x},t).\label{v1-01}
\end{equation}

Let $L_{V}=\nu\Delta+V\cdot\nabla$ which is an elliptic operator
of second order on $\mathbb{R}^{d}$, and $p_{V}^{D}(\tau,\xi;t,x)$
denote the transition probability density function of the $L_{V}$-diffusion
killed on leaving the domain $D$,  which is the Green function of
$\frac{\partial}{\partial t}+L_{V}$ on $D$ satisfying the Dirichlet
condition, cf. \citep{Freidlin1985} and \citep{StroockVaradhan}
for example.

From now on we assume that $\nabla\cdot V(x,t)=0$ for $x_{d}>0$.
Then the extension of \textbf{$V(x,t)$} via the reflection about
$x_{d}=0$ is divergence-free in $\mathbb{R}^{d}$, i.e. $\nabla\cdot V(\cdot,t)=0$
in $\mathbb{R}^{d}$ in the distribution sense. Since $L_{V}^{\star}=L_{-V}$,
$p_{V}^{D}(\tau,\xi;t,x)$ is the Green function of $\frac{\partial}{\partial t}-L_{-V}$
on $D$ subject to the Dirichlet boundary condition. Since $V(\bar{x},t)=\overline{V(x,t)}$
for all $x$ and $t\geq0$, $p_{V}(\tau,\xi;t,x)=p_{V}(\tau,\bar{\xi};t,\bar{x})$
for $\xi,x\in\mathbb{R}^{d}\textrm{ and }t>\tau\geq0$, and 
\begin{equation}
p_{V}^{D}(\tau,\xi;t,x)=p_{V}(\tau,\xi;t,x)-p_{V}(\tau,\xi;t,\bar{x})\label{f-04}
\end{equation}
for $\xi,x\in D\textrm{ and }t>\tau\geq0$.

Let $f(x,t)$ be a solution to the initial and boundary value problem
of the parabolic equation 
\begin{equation}
\left(L_{-V}-\frac{\partial}{\partial t}\right)f+g=0\textrm{ in }D\times[0,\infty),\quad f(\cdot,0)=\varphi,\label{eq:cauchy prm1-1}
\end{equation}
subject to the Dirichlet boundary condition that 
\begin{equation}
\left.f(x,t)\right|_{\partial D}=0\quad\textrm{ for }x\in\partial D\;\textrm{ and }t>0,\label{D-c-s1}
\end{equation}
where $g(x,t)$ is a continuous and bounded function on $D\times[0,\infty)$.

We next derive a stochastic integral representation theorem for solutions to the problem \eqref{eq:cauchy prm1-1}, \eqref{D-c-s1}. Let $X^{\xi}=(X_{t}^{\xi})_{t\geq0}$ (for $\xi\in\mathbb{R}^{d}$)
be the solution to SDE: 
\begin{equation}
\textrm{d}X_{t}=V(X_{t},t)\textrm{d}t+\sqrt{2\nu}\textrm{d}B_{t},\quad X_{0}=\xi,\label{b-SDE}
\end{equation}
where $(B_{t})$ is a standard $d$-dimensional Brownian motion on
some probability space.
\begin{thm}
\label{thm23-5} The solution $f(x,t)$ to the problem \eqref{eq:cauchy prm1-1}, \eqref{D-c-s1} possesses the following stochastic integral representation
\begin{align}
\begin{split}
f(x,t) & =\int_{D}\left(p_{V}(0,\eta;t,x)-p_{V}(0,\bar{\eta};t,x)\right)\varphi(\eta)\textrm{d}\eta \\
 & +\int_{0}^{t}\int_{D}\mathbb{E}\left[\left.1_{\{s>\gamma_{t}(X^{\eta})\}}g(X_{s}^{\eta},s)\right|X_{t}^{\eta}=x\right]p_{V}(0,\eta;t,x)\textrm{d}\eta\textrm{d}s\label{W-aa2-1-1}
\end{split}
\end{align}
for $t>0$ and $x\in D$, where $\gamma_{t}(X^{\eta}) = \sup\{s\in(0,t):X_{s}^{\eta}\in D\}$ respectively. 
\end{thm}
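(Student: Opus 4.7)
The plan is to adapt the duality-based argument of Theorem~\ref{Theorem - representation of vorticity} to the Dirichlet setting by killing an auxiliary backward diffusion at $\partial D$. I would fix $T>0$ and $x\in D$, introduce $\tilde X$ solving
\begin{equation*}
\textrm{d}\tilde X_s = -V(\tilde X_s, T-s)\,\textrm{d}s + \sqrt{2\nu}\,\textrm{d}W_s,\qquad \tilde X_0 = x,
\end{equation*}
and denote by $\sigma := \inf\{s\geq 0 : \tilde X_s \notin D\}$ its first exit time from $D$.

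First I would apply It\^o's formula to $Y_s := f(\tilde X_{s\wedge\sigma}, T - s\wedge\sigma)$. Using $(L_{-V}-\partial_t) f = -g$ on $D$, together with $f|_{\partial D}=0$ and $f(\cdot,0)=\varphi$, one identifies $Y_T = \varphi(\tilde X_T)\mathbf{1}_{\{\sigma>T\}}$ almost surely. Taking expectations and noting that the stochastic-integral part (a martingale after the stopping) has mean zero yields
\begin{equation*}
f(x,T) = \mathbb{E}\left[\varphi(\tilde X_T)\mathbf{1}_{\{\sigma>T\}}\right] + \int_0^T \mathbb{E}\left[g(\tilde X_r, T-r)\mathbf{1}_{\{r<\sigma\}}\right]\textrm{d}r.
\end{equation*}

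Next, to reconcile the homogeneous term with the first integral in~\eqref{W-aa2-1-1}, I would use that the reflected extension of $V$ is divergence-free on $\mathbb{R}^d$, so that by adjointness $p_{-V}(0,x,T,\eta)=p_V(0,\eta,T,x)$, and then apply the reflection identity~\eqref{f-04} to recognize the absorbing density of $\tilde X$ as $p_{V}(0,\eta,T,x)-p_{V}(0,\bar\eta,T,x)$. For the source term I would disintegrate with respect to $\tilde X_T=\eta$ and invoke the duality of conditional laws of~\citep{ProcA-paper2}: the $x\to\eta$ bridge of $\tilde X$ on $[0,T]$ agrees in law, under the time reversal $r\leftrightarrow T-r$, with the $\eta\to x$ bridge of the forward Taylor diffusion $X^{\eta}$ from~\eqref{b-SDE}. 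Under this correspondence the event $\{r<\sigma\}$ --- that $\tilde X$ has not yet left $D$ by time $r$ --- translates to $\{X_{u}^{\eta}\in D\ \text{for all}\ u\in[T-r,T]\}$; after substituting $s=T-r$, this is precisely $\{s>\gamma_{T}(X^{\eta})\}$, with $\gamma_T$ understood as the last time in $(0,T)$ at which $X^{\eta}$ lies outside $D$.

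Assembling the two contributions and carrying out the change of variables $s=T-r$ in the time integral produces~\eqref{W-aa2-1-1} at $t=T$; the outer $\eta$-integration may be restricted to $D$ because $\varphi$ and $g$ are extended by zero off $D$ by the convention of the introduction. Since $T>0$ and $x\in D$ were arbitrary, the theorem follows. I expect the main obstacle to be the third step: the duality result of~\citep{ProcA-paper2} is stated for unconstrained diffusions, and its application in the presence of the path-space functional $\mathbf{1}_{\{r<\sigma\}}$ requires care. The cleanest fix is to first condition on the restriction of $\tilde X$ to $[0,r]$ --- so that $\mathbf{1}_{\{r<\sigma\}}$ becomes measurable with respect to the conditioning --- and then apply the duality only to the tail on $[r,T]$; an equivalent route is to transport the killing through at the level of transition kernels via~\eqref{f-04} and verify that ``no excursion out of $D$ after time $s$'' is the correct time-reversed analogue of ``$\tilde X$ still alive at time $r$''.
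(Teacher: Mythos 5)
Your proposal is correct and follows essentially the same argument as the paper: the same auxiliary backward diffusion killed on leaving $D$, the same It\^o decomposition on a stopped process using the Dirichlet/initial data, the same use of $\nabla\cdot V=0$ to pass from $p_{-V}$ to $p_{V}$, the reflection identity \eqref{f-04} to recognise the killed density as $p_{V}(0,\eta;t,x)-p_{V}(0,\bar{\eta};t,x)$, and the same appeal to the duality of conditional laws of \citep{ProcA-paper2} to transport the path-space indicator to the forward Taylor diffusion. You have also correctly observed that $\gamma_{t}(X^{\eta})$ must be read as $\sup\{s\in(0,t):X^{\eta}_{s}\notin D\}$ (the statement appears to contain a typographic slip), and your concern about applying the duality in the presence of $1_{\{r<\sigma\}}$ is handled exactly as in the paper, by noting that the exit time is a stopping time so this indicator is a functional of the path up to time $r$, to which the duality of conditional path laws applies directly.
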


\begin{proof}
The proof relies on the duality of conditional laws of diffusions,
established in \citep{ProcA-paper2}. Take arbitrary fixed $t>0$.
For simplicity $\tilde{X}_{s}^{\xi}$ denotes the unique (weak) solution
$\tilde{X}(t,\xi;s)$ to SDE 
\begin{equation}
\textrm{d}\tilde{X}=-V^{t}(\tilde{X},s)\textrm{d}s+\sqrt{2\nu}\textrm{d}B,\quad\tilde{X}(t,x;0)=x.\label{ba-s01-1-1}
\end{equation}
Let $Y_{s}=f(\tilde{X}_{s\wedge T_{\xi}}^{\xi},t-s)=1_{\{s<T_{\xi}\}}f(\tilde{X}_{s}^{\xi},t-s)$
for $s\leq t$, where $T_{\xi}=\inf\left\{ s:\tilde{X}_{s}^{\xi}\notin D\right\} $
and the second equality follows from the assumption that $f(x,s)$ vanishes
at the boundary $\partial D$. By It\^o's formula and \eqref{eq:cauchy prm1-1}, one has
\begin{equation}
Y_{s}=Y_{0}+\sqrt{2\nu}\int_{0}^{s}1_{\{r<T_{\xi}\}}\nabla f(\tilde{X}_{r}^{\xi},t-r)\cdot\textrm{d}B_{r}-\int_{0}^{s}1_{\{r<T_{\xi}\}}g(\tilde{X}_{r}^{\xi},t-r)\textrm{d}r\label{Y-aa1-1}
\end{equation}
for every $s\in[0,t]$. After taking expectation both sides we may
conclude that 
\[
f(\xi,t)=\mathbb{E}\left[f(\tilde{X}_{t}^{\xi},0)1_{\{t<T_{\xi}\}}\right]+\int_{0}^{t}\mathbb{E}\left[1_{\{s<T_{\xi}\}}g(\tilde{X}_{s}^{\xi},t-s)\right]\textrm{d}s.
\]
Note that $T_{\xi}$ is a stopping time with respect to the filtration
generated by $\tilde{X}$, so that $1_{\{s<T_{\xi}\}}$ is therefore
measurable with respect to $\tilde{X}$ running up to time $s\leq t$.
Therefore we may take conditional expectation on $\tilde{X}_{t}=\eta$,
to obtain that 
\begin{align*}
f(\xi,t) & =\int_{\mathbb{R}^{d}}\mathbb{E}\left[\left.1_{\{t<T_{\xi}\}}\right|\tilde{X}_{t}^{\xi}=\eta\right]\varphi(\eta)p_{-V^{t}}(0,\xi;t,\eta)\textrm{d}\eta\\
 & +\int_{0}^{t}\int_{\mathbb{R}^{d}}\mathbb{E}\left[\left.1_{\{s<T_{\xi}\}}g(\tilde{X}_{s}^{\xi},t-s)\right|\tilde{X}_{t}^{\xi}=\eta\right]p_{-V^{t}}(0,\xi;t,\eta)\textrm{d}\eta\textrm{d}s
\end{align*}
for every $\xi\in D$, where $p_{-V^{t}}(\tau,\xi;s,\eta)$ is the
transition probability density function of the diffusion $\tilde{X}^{\xi}$.
Since $\nabla\cdot V=0$, we can replace $p_{-V^{t}}(0,\xi;t,\eta)$
by $p_{V}(0,\eta;t,\xi)$. Hence 
\begin{align}
\begin{split}
f(\xi,t) & =\int_{D}\mathbb{E}\left[\left.1_{\{t<T_{\xi}\}}\right|\tilde{X}_{t}^{\xi}=\eta\right]\varphi(\eta) 
p_{V}(0,\eta;t,\xi)\textrm{d}\eta \\
 & +\int_{0}^{t}\int_{D}\mathbb{E}\left[\left.1_{\{s<T_{\xi}\}}g(\tilde{X}_{s}^{\xi},t-s)\right|\tilde{X}_{t}^{\xi}=\eta\right] 
 p_{V}(0,\eta;t,\xi)\textrm{d}\eta\textrm{d}s\label{W-aa2-2}
\end{split}
\end{align}
for every $\xi\in D$ and $t>0$. Next we utilise the duality of conditional
laws established in \citep{ProcA-paper2} and rewrite the conditional
expectations in terms of the diffusion process $X^{\xi}$ (starting
from $\xi$ at $\tau=0$) whose infinitesimal generator is $\nu\Delta+V\cdot\nabla$.
Hence 
\begin{align}
\begin{split}
f(\xi,t) & =\int_{D}\mathbb{E}\left[\left.1_{\{t<\zeta(X^{\eta})\}}\right|X_{t}^{\eta}=\xi\right]\varphi(\eta)p_{V}(0,\eta;t,\xi)\textrm{d}\eta \\
 & +\int_{0}^{t}\int_{D}\mathbb{E}\left[\left.1_{\{s>\gamma_{t}(X^{\eta})\}}g_{\varepsilon}(X_{s}^{\eta},s)\right|X_{t}^{\eta}=\xi\right]p_{V}(0,\eta;t,\xi)\textrm{d}\eta\textrm{d}s\label{W-aa2-1-1-1}
 \end{split}
\end{align}
where $\zeta(X^{\eta})=\inf\{s:X^{\eta}\notin D\}$ and $\gamma_{t}(X^{\eta}) = \sup\{s\in(0,t):X_{s}^{\eta}\in D\}$, which completes
the proof. 
\end{proof}

The Biot-Savart law for $D$ may be derived from the Green formula
and integration by parts. While the Green function for $D$, satisfying
the Dirichlet condition at $\partial D$, can be identified as 
\begin{equation}
\varGamma_{D}(y,x)=\varGamma(y,x)-\varGamma(y,\overline{x})\quad\textrm{ for }y\neq x\textrm{ or }\overline{x},\label{Green-03}
\end{equation}
where $\varGamma$ is the Green function on $\mathbb{R}^{d}$.

For the case where $d=2$, the Biot-Savart law can be stated as
follows. Suppose $u\in C^{2}(D)\cap C^{1}(\overline{D})$, $\left.u\right|_{\partial D}=0$,
$u\rightarrow0$ and $|\nabla u|\rightarrow0$ at the infinity, and
$\nabla\cdot u=0$ in $D$. Let $\omega=\nabla\wedge u$ and assume
that both $\omega,\nabla\omega\in L^{1}(D)$. Then 
\begin{equation}
u(x)=\int_{D}K(x,y)\omega(y)\textrm{d}y\quad\forall x\in D,\label{B-S law}
\end{equation}
where 
\begin{equation}
K(x,y)=\frac{1}{2\pi}\left(\frac{(x-y)^{\bot}}{|x-y|^{2}}-\frac{(\bar{x}-y)^{\bot}}{|\overline{x}-y|^{2}}\right)\label{eq:qq13-1}
\end{equation}
(for $y\neq x$ or $\overline{x}$) is the Biot-Savart kernel. 

We are now in a position to establish the random vortex formulation
for two-dimensional flows past a solid wall. Therefore from now on
we assume that $d=2$. The fluid flow is described by its velocity
vector field $u(x,t)$, which satisfies the Navier-Stokes equations:
\begin{equation}
\frac{\partial}{\partial t}u+(u\cdot\nabla)u-\nu\Delta u+\nabla P-F=0\quad\textrm{ in }D\label{meq-01}
\end{equation}
and 
\begin{equation}
\nabla\cdot u=0\quad\textrm{ in }D,\label{meq-02}
\end{equation}
subject to the non-slip condition $u(x,t)=0$ for $x\in\partial D$,
where $P(x,t)$ is the pressure, $F(x,t)$ is an external force, and
$\nu>0$ is the kinematic viscosity constant of the fluid. Recall
that $u(x,t)$ is extended to the whole space so that $u^{1}(x,t)=u^{1}(\overline{x},t)$
and $u^{2}(x,t)=-u^{2}(\overline{x},t)$.

The vorticity $\omega=\nabla\wedge u$ evolves according to the transport
equation: 
\begin{equation}
\frac{\partial}{\partial t}\omega+(u\cdot\nabla)\omega-\nu\Delta\omega-G=0\quad\textrm{ in }D,\label{2d-v4}
\end{equation}
where $G=\nabla\wedge F$. Since $u(x,t)$ is subject to the non-slip
condition, according to the Biot-Savart law, 
\begin{equation}
u(x,t)=\int_{D}K(x,y)\omega(y,t)\textrm{d}y\quad\forall x\in D,\label{B-S law-1}
\end{equation}
where the singular integral kernel $K(y,x)$ is given by \eqref{eq:qq13-1}. The vorticity $\omega(x,t)$ vanishes at the infinity, while $\omega(x,t)$ in general may assume non-trivial values at the rigid wall $\partial D$, and boundary vorticity is given by 
\[
\theta(x_{1},t):=-\frac{\partial u^{1}}{\partial x_{2}}(x_{1},0,t)\quad\textrm{ for }x_{1}\in\mathbb{R}\textrm{ and }t\geq 0.
\]

Let us introduce a family of perturbations of the vorticity $\omega$
modified near the boundary using a cut-off function, so that the
modified vorticity vanishes along the boundary $\partial D$. More
precisely $\theta$ is extended to the interior of $D$ as 
\begin{equation}
\sigma_{\varepsilon}(x_{1},x_{2},t)=\theta(x_{1},t)\phi(x_{2}/\varepsilon),\label{ext-01}
\end{equation}
for each $\varepsilon>0$, where $\phi:[0,\infty)\rightarrow[0,1]$
is a proper cut-off function to be chosen later, and $\phi$ is smooth,
such that $\phi(r)=1$ for $r\in[0,1/3)$ and $\phi(r)=0$ for $r\geq2/3$,
so that $\left.\omega\right|_{\partial D}=\left.\sigma_{\varepsilon}\right|_{\partial D}$
for every $\varepsilon>0$. Let $W^{\varepsilon}=\omega-\sigma_{\varepsilon}$, then 
\begin{equation}
\left(L_{-u}-\frac{\partial}{\partial t}\right)W_{\varepsilon}+g_{\varepsilon}=0\quad\textrm{ in }D\quad\textrm{ and }\left.W_{\varepsilon}\right|_{\partial D}=0,\label{eq:qq4}
\end{equation}
where 
\begin{align}
\begin{split}
g_{\varepsilon}(x,t) & =G(x,t)+\frac{\nu}{\varepsilon^{2}}\phi''(x_{2}/\varepsilon)\theta(x_{1},t)-\frac{1}{\varepsilon}\phi'(x_{2}/\varepsilon)u^{2}(x,t)\theta(x_{1},t)\\
 & +\phi(x_{2}/\varepsilon)\left(\nu\frac{\partial^{2}\theta}{\partial x_{1}^{2}}(x_{1},t)-\frac{\partial\theta}{\partial t}(x_{1},t)\right)-\phi(x_{2}/\varepsilon)u^{1}(x,t)\frac{\partial\theta}{\partial x_{1}}(x_{1},t)\label{g-xt-01}
 \end{split}
\end{align}
for any $x=(x_{1},x_{2})$, $x_{2}\geq0$. The initial data for $W^{\varepsilon}$
is given by 
\[
W_{0}^{\varepsilon}(x)=\omega_{0}(x_{1},x_{2})-\omega_{0}(x_{1},0)\phi(x_{2}/\varepsilon)\quad\textrm{ for }x\in D
\]
where $\omega_{0}=\nabla\wedge u_{0}$ is the initial vorticity. 
\begin{thm}
\label{thm-newint} The following representation holds: 
\begin{align}
\begin{split}
u(x,t) & =\int_{D}\mathbb{E}\left[1_{D}(X_{t}^{\xi})K(x,X_{t}^{\xi})-1_{D}(X_{t}^{\overline{\xi}})K(x,X_{t}^{\overline{\xi}})\right]\omega_{0}(\xi)\textrm{d}\xi \\
 & +\int_{0}^{t}\int_{D}\mathbb{E}\left[1_{\{s>\gamma_{t}(X^{\eta})\}}1_{D}(X_{t}^{\xi})K(x,X_{t}^{\xi})G(X_{s}^{\xi},s)\right]\textrm{d}\xi\textrm{d}s \\
 & +\lim_{\varepsilon\downarrow0}\int_{0}^{t}\int_{D}\mathbb{E}\left[1_{\{s>\gamma_{t}(X^{\eta})\}}1_{D}(X_{t}^{\xi})K(x,X_{t}^{\xi},x)\tilde{g}_{\varepsilon}(X_{s}^{\xi},s)\right]\textrm{d}\xi\textrm{d}s\label{u-aa3}
\end{split}
\end{align}
for every $x=(x_{1},x_{2})\in D$, $t>0$, and 
\[
u^{1}(x,t)=u^{1}(\overline{x},t),\quad u^{2}(x,t)=-u^{2}(\overline{x},t)
\]
for $x=(x_{1},x_{2})$ with $x_{2}>0$, $t>0$, where 
\[
\tilde{g}_{\varepsilon}(x,t)=\frac{\nu}{\varepsilon^{2}}\chi(x_{2}/\varepsilon)\theta(x_{1},t)\quad\textrm{ for }x=(x_{1},x_{2})
\]
$\gamma_{t}(\psi)$ denotes $\sup\left\{ s\in(0,t):\psi(s)\in D\right\} $
and 
\begin{equation}
\chi(r)=\begin{cases}
162\left(2r-1\right) & \textrm{ for }r\in[1/3,2/3],\\
0 & \textrm{ otherwise. }
\end{cases}\label{phi-2d-1}
\end{equation}
\end{thm}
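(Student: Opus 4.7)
The plan is to combine the Biot--Savart formula \eqref{B-S law-1} with the stochastic integral representation of Theorem \ref{thm23-5}, applied to the auxiliary vorticity $W_\varepsilon=\omega-\sigma_\varepsilon$ built in \eqref{ext-01}. Since $W_\varepsilon$ vanishes on $\partial D$ and solves \eqref{eq:qq4} with drift $-u$, initial datum $W_0^\varepsilon$ and forcing $g_\varepsilon$, Theorem \ref{thm23-5} (with $V=u$, $\varphi=W_0^\varepsilon$, $g=g_\varepsilon$) yields an explicit formula for $W_\varepsilon(y,t)$. Writing $u(x,t)=\int_D K(x,y)W_\varepsilon(y,t)\,dy+\int_D K(x,y)\sigma_\varepsilon(y,t)\,dy$ for $x\in D$, one plugs the Theorem \ref{thm23-5} representation into the first piece and passes to the limit $\varepsilon\downarrow 0$ at the end.

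After Fubini, the inner $y$-integrals are identified as expectations along the Taylor diffusion $X^\eta$ generated by $\nu\Delta+u\cdot\nabla$: since $p_u(0,\eta;t,\cdot)$ is the density of $X_t^\eta$,
\begin{equation*}
\int_D K(x,y)\,p_u(0,\eta;t,y)\,dy=\mathbb{E}\!\left[1_D(X_t^\eta)\,K(x,X_t^\eta)\right],
\end{equation*}
and the reflected contribution $\int_D K(x,y)\,p_u(0,\bar\eta;t,y)\,dy=\mathbb{E}[1_D(X_t^{\bar\eta})\,K(x,X_t^{\bar\eta})]$ comes from the diffusion started at the reflected point $\bar\eta$, using the identity $p_u(0,\bar\eta;t,y)=p_u(0,\eta;t,\bar y)$ inherited from the symmetry \eqref{v1-01}. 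The forcing term collapses the same way: the conditional expectation together with the density combines into
\begin{equation*}
\mathbb{E}\!\left[1_D(X_t^\eta)\,K(x,X_t^\eta)\,1_{\{s>\gamma_t(X^\eta)\}}\,g_\varepsilon(X_s^\eta,s)\right].
\end{equation*}
This gives the pre-limit version of \eqref{u-aa3} with $W_0^\varepsilon$ in place of $\omega_0$ and $g_\varepsilon$ in place of $G+\tilde g_\varepsilon$.

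Finally, one sends $\varepsilon\downarrow 0$. First, for any fixed $x\in D$ with $x_2>0$ the extra piece $\int_D K(x,y)\sigma_\varepsilon(y,t)\,dy$ vanishes because $\sigma_\varepsilon$ is supported in the strip $\{y_2\leq 2\varepsilon/3\}$, whose two-dimensional measure is $O(\varepsilon)$ on compact $y_1$-sets, while $K(x,\cdot)$ remains bounded on that strip once $\varepsilon<x_2/2$. Second, $W_0^\varepsilon(\eta)=\omega_0(\eta)-\omega_0(\eta_1,0)\phi(\eta_2/\varepsilon)\to\omega_0(\eta)$ pointwise on $D$, so dominated convergence delivers the initial-data term in the stated form. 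Third, decompose $g_\varepsilon=G+\tilde g_\varepsilon+r_\varepsilon$, where $r_\varepsilon$ collects the three terms of \eqref{g-xt-01} other than $G$ and $\nu\varepsilon^{-2}\phi''(x_2/\varepsilon)\theta$. The no-slip condition $u^2(x_1,0,t)=0$ gives $u^2=O(x_2)=O(\varepsilon)$ on the support of $\phi'(\cdot/\varepsilon)$, so $\varepsilon^{-1}\phi'(x_2/\varepsilon)u^2$ is uniformly bounded and vanishes pointwise as $\varepsilon\downarrow 0$; the remaining two summands in $r_\varepsilon$ carry a factor $\phi(x_2/\varepsilon)$ and also vanish pointwise on $D$. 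Dominated convergence then forces the $r_\varepsilon$-contribution to zero, whereas $\tilde g_\varepsilon$ admits no pointwise limit, so its term must stay under $\lim_{\varepsilon\downarrow 0}$ in \eqref{u-aa3}. The two reflection identities for $u$ are immediate from the extension convention \eqref{v1-01} and require no further argument.

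The main obstacle is furnishing the dominating envelopes needed for Fubini and for the final dominated-convergence step: one wants enough decay of $\theta(x_1,t)$ in $x_1$ to make each slab of $\sigma_\varepsilon$ integrable, enough regularity of $\omega_0$ and $\theta$ to dominate $r_\varepsilon$ uniformly in $\varepsilon$, and a quantitative handle on the mild singularity of $K(x,y)$ near $y=x$ so that the strip estimate in step one is not ruined near the diagonal. Modulo these technical bounds the argument is a reflection-image variant of the proof of Theorem \ref{Theorem - representation of vorticity}.
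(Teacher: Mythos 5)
Your proposal follows essentially the same approach as the paper: apply Theorem \ref{thm23-5} (via the killed-diffusion transition kernel $p_u^D=p_u(\cdot\,;\cdot)-p_u(\bar\cdot\,;\cdot)$) to the deformed vorticity $W_\varepsilon=\omega-\sigma_\varepsilon$, combine with the half-plane Biot--Savart law \eqref{B-S law-1} in the decomposition $u(x,t)=\int_D K(x,y)W_\varepsilon(y,t)\,\mathrm{d}y+\int_D K(x,y)\sigma_\varepsilon(y,t)\,\mathrm{d}y$, apply Fubini to recognise the inner $y$-integrals as expectations under the Taylor diffusion, and finally send $\varepsilon\downarrow 0$. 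The paper dispatches the last step with the single sentence ``the result follows by setting $\varepsilon\downarrow 0$,'' whereas you spell out the limit: the $\sigma_\varepsilon$ contribution is $O(\varepsilon)$ once $\varepsilon<3x_2/2$, $W_0^\varepsilon\to\omega_0$ pointwise, and of the pieces of $g_\varepsilon$ only $G$ survives and $\tilde g_\varepsilon$ remains under the $\lim_{\varepsilon\downarrow 0}$ while the remainder $r_\varepsilon$ dies using the no-slip bound $u^2=O(x_2)$ and $\phi(x_2/\varepsilon)\to 0$. That is a genuine (if still heuristic, as you yourself flag regarding the missing dominating envelopes) improvement over the paper's one-line dismissal, but it is the same route, not a different one.
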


\begin{proof}
For every $\varepsilon>0$, the deformed vorticity $W_{\varepsilon}$
satisfies the parabolic equation \eqref{eq:qq4}: 
\begin{equation}
\left(L_{-u}-\frac{\partial}{\partial t}\right)W_{\varepsilon}+g_{\varepsilon}=0\quad\textrm{ in }D\label{eq:qq4-a}
\end{equation}
subject to the Dirichlet boundary condition  $\left.W_{\varepsilon}\right|_{\partial D}=0.$
According to Theorem \ref{thm23-5} 
\begin{align}
\begin{split}
W_{\varepsilon}(\xi,t) & =\int_{D}\left(p_{u}(0,\eta,t,\xi)-p_{u}(0,\bar{\eta},t,\xi)\right)W_{\varepsilon}(\eta,0)\textrm{d}\eta \\
 & +\int_{0}^{t}\int_{D}\mathbb{E}\left[\left.1_{\{s>\gamma_{t}(X^{\eta})\}}g_{\varepsilon}(X_{s}^{\eta},s)\right|X_{t}^{\eta}=\xi\right]p_{u}(0,\eta;t,\xi)\textrm{d}\eta\textrm{d}s
 \end{split}
 \label{W-aa2-1}
\end{align}
where $\gamma_{t}(X^{\eta}) = \sup\{s\in(0,t):X_{s}^{\eta}\in D\}$, so by using the Biot-Savart law we obtain that 
\begin{align*}
u(x,t) & =\int_{D}K(x,y)\sigma_{\varepsilon}(y,t)\textrm{d}y\\
 & +\int_{D}\mathbb{E}\left[1_{D}(X_{t}^{\xi})K(x,X_{t}^{\xi})-K(x,X_{t}^{\overline{\xi}})1_{D}(X_{t}^{\overline{\xi}})\right]W_{\varepsilon}(\xi,0)\textrm{d}\xi\\
 & +\int_{0}^{t}\int_{D}\mathbb{E}\left[1_{\{s>\gamma_{t}(X^{\eta})\}}1_{D}(X_{t}^{\eta})K(x,X_{t}^{\eta})\wedge g_{\varepsilon}(X_{s}^{\eta},s)\right]\textrm{d}\eta\textrm{d}s
\end{align*}
for every $x\in D$, $t>0$ and for every $\varepsilon>0$. The result
follows by setting $\varepsilon\downarrow0$. 
\end{proof}

\section{Numerical simulations of wall-bounded flows}
\label{Numerical simulations of wall-bounded flows}

Let us make several comments on the simulations of fluid flows passing a wall based on the functional integral representations we have established in previous sections. Then we describe several numerical schemes. Our numerical experiment is carried out with a simplified scheme to demonstrate the usefulness of the methods advocated in the present
work.

Let us recall the important notations. $D$ represents the upper semi-plane where $x_{2}>0$. The reflection $x\mapsto\overline{x}$ with $\overline{x}=(x_{1},-x_{2})$ for $x=(x_{1},x_{2})$, and $u(\overline{x},t)=(u^{1}(x,t),-u^{2}(x,t))$ for every $x\in\mathbb{R}^{2}$.

For a two-dimensional vector $a=(a_{1},a_{2})$ and a scalar $c$, $a\wedge c=(a_{2}c,-a_{1}c)$.

Let us reserve two parameters $\varepsilon>0$ and $\delta>0$ which will be used to realise a practical approximation based on the functional integral representation in Theorem \ref{thm-newint}. Define
\[
K_{D,\delta}(y,x)=\frac{1}{2\pi}1_{D}(y)\left(1-e^{-|y-x|^{2}/\delta^{2}}\right)\left(\frac{(y-x)^{\bot}}{|y-x|^{2}}-\frac{(y-\overline{x})^{\bot}}{|y-\overline{x}|^{2}}\right),
\]
the regularised Biot-Savart kernel for the half-plane. The regularisation
is required in order to update the boundary stress. We may introduce
the boundary regularised function 
\begin{equation}
\chi(r)=\begin{cases}
162(2r-1) & \textrm{ for }r\in[1/3,2/3],\\
0 & \textrm{ otherwise. }
\end{cases}\label{phi-2d-1-1}
\end{equation}

Then, according to Theorem \ref{thm-newint}, the velocity will be computed by the following formula: 
\begin{align*}
u(x,t) & =\int_{D}\mathbb{E}\left[K_{D,\delta}(x,X_{t}^{\xi})-K_{D,\delta}(x,X_{t}^{\overline{\xi}})\right]\omega_{0}(\xi)\textrm{d}\xi\\
 & +\int_{0}^{t}\int_{D}\mathbb{E}\left[1_{\{s>\gamma_{t}(X^{\xi})\}}K_{D,\delta}(x,X_{t}^{\xi})G(X_{s}^{\xi},s)\right]\textrm{d}\xi\textrm{d}s\\
 & +\frac{\nu}{\varepsilon^{2}}\int_{0}^{t}\int_{D}\mathbb{E}\left[\chi\left(\frac{X_{2;s}^{\xi}}{\varepsilon}\right)1_{\{s>\gamma_{t}(X^{\xi})\}}K_{D,\delta}(x,X_{t}^{\xi})\theta(X_{1;s}^{\xi},s)\right]\textrm{d}\xi\textrm{d}s,
\end{align*}
for every $x\in D$ and $t>0$, where $\gamma_{t}(\psi) = \sup\left\{ s\in(0,t):\psi(s)\in D\right\}$. In the above formula $X_{1;s}^{\xi}$ and $X_{2;s}^{\xi}$ denote the first and second coordinate of $X_{s}^{\xi}$ respectively. In what follows, we will omit the coordinate index by denoting $\chi(x) = \chi(x_2)$ and $\theta(x, t) = \theta(x_1,t)$.

Let us describe the numerical schemes based on the last model. We divide the schemes into two steps. During the first step we set up the discretisation procedure for dealing with (finite-dimensional) integrals in time and in the space variables. The discretisation of this type appears in any numerical methods, but a bit care is needed due to the appearance of boundary layer phenomena for wall-bounded flows. According to Prandtl \citep{Prandtl1904}, there is a thin layer near the boundary within which the main stream velocity decreases to zero sharply, hence there is substantial stress at the wall, which in turn generates significant boundary vorticity. Turbulence may be generated near the solid wall if the Reynolds number is large. The boundary layer thickness $\delta>0$ of a fluid flow with small viscosity is given by $\frac{\delta}{L}\sim\sqrt{\frac{1}{\textrm{Re}}}$, where $\textrm{Re}$ is the Reynolds number $\textrm{Re}=\frac{VL}{\nu}$,
$V$ and $L$ are the typical velocity magnitude and typical length.

In numerical simulations, we can always first make a reduction, to make the fluid dynamics equations dimensionless. That is, the typical velocity and length may be fixed to the good size for printing the results. While we still prefer to use the Reynolds number $\textrm{Re}$,
so that $\nu\sim\frac{1}{\textrm{Re}}$. Suppose $\nu$ is small, then near the boundary, the mesh $h_{2}$ for the $x_{2}$-coordinate has to be far smaller than the boundary layer thickness $\delta\sim\sqrt{\frac{1}{\textrm{Re}}}$, which leads to the first constraint: 
\begin{equation}
h_{2}\ll L\sqrt{\frac{1}{\textrm{Re}}}.\label{h-ratio01}
\end{equation}
The mesh size $h_{1}$ for $x_{1}$-coordinate should be comparable to $h_{2}$ but not need to be much larger than $h_{2}$. For the region outside the boundary layer, we can use reasonable size of the mesh $h_{0}$, and in general we choose $h_{0}\geq h_{1}\geq h_{2}$. We
may choose integers $N_{1}$, $N_{2}$ and $N_{0}$ according to the following constraints: 
\begin{equation}
N_{1}h_{1}\sim L,\ \,L\geq\delta\quad\textrm{ and }N_{0}h_{0}\sim L.\label{meshes}
\end{equation}
However in order to capture the flow in the boundary layer, $N_{2}$ has to be chosen such that $h_{2}N_{2}>\delta$.

We introduce regular rectangular lattices for the boundary and the outer layer, and calculate the values at the lattice points 
\begin{align*}
x_{b}^{i_{1},i_{2}}=
(i_{1}h_{1},i_{2}h_{2}) & \quad\textrm{for }|i_{1}|\leq N_{1}\textrm{ and }|i_{2}|\leq N_{2};\\
x_{o}^{i_{1},i_{2}}=(i_{1}h_{0},i_{2}h_{0}) & \quad\textrm{for }|i_{1}|\leq N_{0}\textrm{ and }|i_{2}|\leq N_{0},
\label{xii-01}
\end{align*}
which in the following will be written as $x^{i_{1},i_{2}}$ to denote both the boundary and outer lattices.

Notice that the lattices are symmetric with respect to the second coordinate as we need the corresponding values for $X_{t}^{\overline{\xi}}$ in the first integral in representation \eqref{u-aa3}.
The total number of the lattice points needed for the lattices equals 
\[
N=(2 N_{1}+1)(2 N_{2}+1)+(2 N_{0}+1)^{2},
\]
which largely determines the computational cost. 

Thus far we have described the discretisation of space variables. For time $t$, we can use a unified scheme, at each step we use time duration $h>0$ to be chosen properly, with $t_{k}=kh$ for $k=0,1,2,\ldots.$ The Taylor diffusions \eqref{b-SDE} are initialised at the lattice points in the numerical scheme at time $t_{0}=0$. Then we propose a simple scheme to compute them at further times $t_{k}=kh$ for $k=1,2,\ldots$ --- it is given by the discretisation of the SDE which can be set up as follows: 
\[
X_{0}^{i_{1},i_{2}}=x^{i_{1},i_{2}},
\]
\[
X_{t_{k+1}}^{i_{1},i_{2}}=X_{t_{k}}^{i_{1},i_{2}}+hu(X_{t_{k}}^{i_{1},i_{2}},t_{k})+\sqrt{2\nu}(B_{t_{k+1}}-B_{t_{k}}),
\]
for $k=0,1,\ldots$, where $u(x,t)$ are updated according to 
\begin{align}
\begin{split}
u(x,t_{k+1}) & =\sum_{\substack{i_{1},i_{2}\\ i_{2}>0}}A_{i_{1},i_{2}}\mathbb{E}\left[K_{D,\delta}\left(x,X_{t_{k+1}}^{i_{1},i_{2}}\right)-K_{D,\delta}\left(x,X_{t_{k+1}}^{i_{1},-i_{2}}\right)\right]\omega_{i_{1},i_{2}} \\
 & +\sum_{\substack{i_{1},i_{2}\\ i_{2}>0}}A_{i_{1},i_{2}}\sum_{l=0}^{k}h\mathbb{E}\left[1_{\left\{t_l>\gamma_{t_k}\left(X^{i_{1},i_{2}}\right)\right\}}K_{D,\delta}\left(x,X_{t_{k+1}}^{i_{1},i_{2}}\right)G\left(X_{t_{l}}^{i_{1},i_{2}},t_{l}\right)\right] \\
 & +\frac{\nu}{\varepsilon^{2}}\sum_{\substack{i_{1},i_{2}\\ i_{2}>0}}A_{i_{1},i_{2}}\sum_{l=0}^{k}h\mathbb{E}\Biggr[1_{\left\{t_l>\gamma_{t_k}\left(X^{i_{1},i_{2}}\right)\right\}}\chi\left(\frac{X_{t_{l}}^{i_{1},i_{2}}}{\varepsilon}\right)  \\ & \qquad\qquad \times K_{D,\delta}\left(x,X_{t_{k+1}}^{i_{1},i_{2}}\right)\theta\left(X_{t_{l}}^{i_{1},i_{2}},t_{l}\right)\Biggr],
 \end{split}
 \label{g-s-01}
\end{align}
for $x=(x_{1},x_{2})$ with $x_{2}>0$, and 
\[
(u^{1}(x,t),u^{2}(x,t))=(u^{1}(\overline{x},t),-u^{2}(\bar{x},t))\quad\textrm{ if }x_{2}<0.
\]
In the above formula, the summation over indices $i_1, i_2$ is taken over both the boundary and outer lattice, and therefore 
\begin{equation}
A_{i_{1},i_{2}}=\begin{cases}
h_{1}h_{2} & \textrm{ for the sum over the boundary lattice},\\
h_{0}h_{0} & \textrm{ for the sum over the outer lattice},
\end{cases}\label{Aii-01}
\end{equation}
and
\begin{equation}
\omega_{i_{1},i_{2}}=\omega_{0}(x^{i_{1},i_{2}}),\label{ome-d}
\end{equation}
where $\omega_{0}(x) = \omega(x, 0)$.

The representation \eqref{g-s-01} depends on the boundary vorticity 
\[
\theta(x_{1},t)=-\left.\frac{\partial u^{1}(x,t)}{\partial x_{2}}\right|_{x_{2}=0+}
\]
which can be, for instance, approximated by a finite difference. However, as the values of the velocity can be unstable within the boundary layer, in practice it is better to update the boundary stress by differentiating the iteration \eqref{g-s-01}, so that the boundary stress is updated according to the following procedure:
\begin{align}
\begin{split}
\theta(x_{1},t_{k+1}) & =\sum_{\substack{i_{1},i_{2}\\ i_{2}>0}}A_{i_{1},i_{2}}\mathbb{E}\left[\varTheta_{D,\delta}\left(x,X_{t_{k+1}}^{i_{1},i_{2}}\right)-\varTheta_{D,\delta}\left(x,X_{t_{k+1}}^{i_{1},-i_{2}}\right)\right]\omega_{i_{1},i_{2}} \\
 & +\sum_{\substack{i_{1},i_{2}\\ i_{2}>0}}A_{i_{1},i_{2}}\sum_{l=0}^{k}h\mathbb{E}\left[1_{\left\{t_l>\gamma_{t_k}\left(X^{i_{1},i_{2}}\right)\right\}}\varTheta_{D,\delta}\left(x,X_{t_{k+1}}^{i_{1},i_{2}}\right)G\left(X_{t_{l}}^{i_{1},i_{2}},t_{l}\right)\right] \\
 & +\frac{\nu}{\varepsilon^{2}}\sum_{\substack{i_{1},i_{2}\\ i_{2}>0}}A_{i_{1},i_{2}}\sum_{l=0}^{k}h\mathbb{E}\Biggr[1_{\left\{t_l>\gamma_{t_k}\left(X^{i_{1},i_{2}}\right)\right\}}\chi\left(\frac{X_{t_{l}}^{i_{1},i_{2}}}{\varepsilon}\right)  \\ & \qquad\qquad  \times\varTheta_{D,\delta}\left(x,X_{t_{k+1}}^{i_{1},i_{2}}\right)\theta\left(X_{t_{l}}^{i_{1},i_{2}},t_{l}\right)\Biggr],
 \end{split}
 \label{g-s-01-1}
\end{align}
for $k=0,1,2,\ldots$, where 
\begin{equation*}
\varTheta_{D,\delta}(y,x_{1}) =-\left.\frac{\partial}{\partial x_{2}}\right|_{x_{2}=0+}K_{D,\delta}^{1}(y,x).
\end{equation*}
%and 
%\[
%K_{D,\delta}^{2}(y,x)=1_{\{y_{2}>0\}}(y)\frac{1}{2\pi}\left(1-\textrm{e}^{-\frac{|y-x|^{2}}{\sqrt{\delta}}}\right)\left(\frac{y_{2}-x_{2}}{|y-x|^{2}}-\frac{y_{2}+x_{2}}{|y-\overline{x}|^{2}}\right).
%\]
We compute the derivative explicitly as follows
\begin{align*}
\varTheta_{D,\delta}(y,x_{1}) =1_{\{y_{2}>0\}}(y)\frac{1}{\pi}\left(\left(1-\textrm{e}^{-\frac{|y_{1}-x_{1}|^{2}+|y_{2}|^{2}}{\delta^2}}\right)\frac{|y_{1}-x_{1}|^{2}-|y_{2}|^{2}}{\left(|y_{1}-x_{1}|^{2}+|y_{2}|^{2}\right)^{2}}\right.& \\
\left.\frac{1}{\delta^2} \textrm{e}^{-\frac{|y_{1}-x_{1}|^{2}+|y_{2}|^{2}}{\delta^2}} \frac{|y_2|^2}{|y_1-x_1|^2+|y_2|^2} \right). &
\end{align*}

In the second step, one has to handle the mathematical expectations in representations in \eqref{g-s-01} and \eqref{g-s-01-1}, which is the core of the Monte-Carlo schemes. We propose two slightly different approaches which  lead to the following schemes.

\subsection{Numerical scheme 1}\label{NS1section}

We are now in a position to formulate our first Monte-Carlo scheme in which we drop the expectation by running independent (two-dimensional) Brownian motions. More precisely, we run the following stochastic differential equations: 
\[
X_{0}^{i_{1},i_{2}}=x^{i_{1},i_{2}},
\]
\[
X_{t_{k+1}}^{i_{1},i_{2}}=X_{t_{k}}^{i_{1},i_{2}}+hu(X_{t_{k}}^{i_{1},i_{2}},t_{k})+\sqrt{2\nu}\sqrt{t_{k+1}-t_{k}}\Phi_{k}
\]
where 
\[
\Phi_{k}=\left(\Phi_{k}^{1},\Phi_{k}^{2}\right)\equiv\frac{B_{t_{k+1}}-B_{t_{k}}}{\sqrt{t_{k+1}-t_{k}}},
\]
with $\Phi_{k}^{1}$, $\Phi_{l}^{2}$ (for $k,l=0,1,\ldots$) being
independent standard Gaussian random variables. $u(x,t)$ is updated
according to the following: 
\begin{align*}
u(x,t_{k+1}) & =\sum_{\substack{i_{1},i_{2}\\ i_{2}>0}}A_{i_{1},i_{2}}\left(K_{D,\delta}\left(x,X_{t_{k+1}}^{i_{1},i_{2}}\right)-K_{D,\delta}\left(x,X_{t_{k+1}}^{i_{1},-i_{2}}\right)\right)\omega_{i_{1},i_{2}}\nonumber \\
 & +\sum_{\substack{i_{1},i_{2}\\ i_{2}>0}}A_{i_{1},i_{2}}\sum_{l=0}^{k}h1_{\left\{t_l>\gamma_{t_k}\left(X^{i_{1},i_{2}}\right)\right\}}K_{D,\delta}\left(x,X_{t_{k+1}}^{i_{1},i_{2}}\right)G\left(X_{t_{l}}^{i_{1},i_{2}},t_{l}\right)\nonumber \\
 & +\frac{\nu}{\varepsilon^{2}}\sum_{\substack{i_{1},i_{2}\\ i_{2}>0}}A_{i_{1},i_{2}}\sum_{l=0}^{k}h1_{\left\{t_l>\gamma_{t_k}\left(X^{i_{1},i_{2}}\right)\right\}}\chi\left(\frac{X_{t_{l}}^{i_{1},i_{2}}}{\varepsilon}\right) \nonumber \\ & \qquad\qquad\times K_{D,\delta}\left(x,X_{t_{k+1}}^{i_{1},i_{2}}\right)\theta\left(X_{t_{l}}^{i_{1},i_{2}},t_{l}\right),
\end{align*}
and
\begin{align*}
\theta(x_{1},t_{k+1}) & =\sum_{\substack{i_{1},i_{2}\\ i_{2}>0}}A_{i_{1},i_{2}}\left(\varTheta_{D,\delta}\left(x,X_{t_{k+1}}^{i_{1},i_{2}}\right)-\varTheta_{D,\delta}\left(x,X_{t_{k+1}}^{i_{1},-i_{2}}\right)\right)\omega_{i_{1},i_{2}}\nonumber \\
 & +\sum_{\substack{i_{1},i_{2}\\ i_{2}>0}}A_{i_{1},i_{2}}\sum_{l=0}^{k}h 1_{\left\{t_l>\gamma_{t_k}\left(X^{i_{1},i_{2}}\right)\right\}}\varTheta_{D,\delta}\left(x,X_{t_{k+1}}^{i_{1},i_{2}}\right)G\left(X_{t_{l}}^{i_{1},i_{2}},t_{l}\right)\nonumber \\
 & +\frac{\nu}{\varepsilon^{2}}\sum_{\substack{i_{1},i_{2}\\ i_{2}>0}}A_{i_{1},i_{2}}\sum_{l=0}^{k}h1_{\left\{t_l>\gamma_{t_k}\left(X^{i_{1},i_{2}}\right)\right\}}\chi\left(\frac{X_{t_{l}}^{i_{1},i_{2}}}{\varepsilon}\right) \nonumber \\ & \qquad\qquad \times\varTheta_{D,\delta}\left(x,X_{t_{k+1}}^{i_{1},i_{2}}\right)\theta\left(X_{t_{l}}^{i_{1},i_{2}},t_{l}\right),
\end{align*}
for $x=(x_{1},x_{2})$ with $x_{2}>0$, and 
\[
(u^{1}(x,t),u^{2}(x,t))=(u^{1}(\overline{x},t),-u^{2}(\bar{x},t))\quad\textrm{ when }x_{2}<0.
\]

\subsection{Numerical scheme 2}\label{NS2section}

In this scheme we appeal to the strong law of large numbers, that is we replace the expectations by averages. Therefore we run the
following stochastic differential equations: 
\begin{equation*}
X_{0}^{m;i_{1},i_{2}}=x^{i_{1},i_{2}},
\end{equation*}
\begin{equation*}
X_{t_{k+1}}^{m;i_{1},i_{2}}=X_{t_{k}}^{m;i_{1},i_{2}}+hu(X_{t_{k}}^{m;i_{1},i_{2}},t_{k})+\sqrt{2\nu}\sqrt{t_{k+1}-t_{k}}\Phi_{k}^{m}
\end{equation*}
where 
\[
\Phi_{k}^{m}=\left(\Phi_{k}^{m,1},\Phi_{k}^{m,2}\right)\equiv\frac{B_{t_{k+1}}^{m}-B_{t_{k}}^{m}}{\sqrt{t_{k+1}-t_{k}}},
\]
and $\Phi_{k}^{m,1}$, $\Phi_{l}^{n,2}$ (for $m,n=1,2,\ldots$ and
$k,l=0,1,\ldots$) are independent standard Gaussian random variables.
Here $u(x,t)$ is updated according to the following: 
\begin{align*}
u(x,t_{k+1}) & =\sum_{\substack{i_{1},i_{2}\\ i_{2}>0}}A_{i_{1},i_{2}}\frac{1}{N}\sum_{m=1}^{N}\left(K_{D,\delta}\left(x,X_{t_{k+1}}^{m;i_{1},i_{2}}\right)-K_{D,\delta}\left(x,X_{t_{k+1}}^{m;i_{1},-i_{2}}\right)\right)\omega_{i_{1},i_{2}}\nonumber \\
 & +\sum_{\substack{i_{1},i_{2}\\ i_{2}>0}}A_{i_{1},i_{2}}\sum_{l=0}^{k}h\frac{1}{N}\sum_{m=1}^{N}1_{\left\{t_l>\gamma_{t_k}\left(X^{m;i_{1},i_{2}}\right)\right\}}K_{D,\delta}\left(x,X_{t_{k+1}}^{m;i_{1},i_{2}}\right)G\left(X_{t_{l}}^{m;i_{1},i_{2}},t_{l}\right)\nonumber \\
 & +\frac{\nu}{\varepsilon^{2}}\sum_{\substack{i_{1},i_{2}\\ i_{2}>0}}A_{i_{1},i_{2}}\sum_{l=0}^{k}h\frac{1}{N}\sum_{m=1}^{N}1_{\left\{t_l>\gamma_{t_k}\left(X^{m;i_{1},i_{2}}\right)\right\}}\chi\left(\frac{X_{t_{l}}^{m;i_{1},i_{2}}}{\varepsilon}\right) \nonumber \\ & \qquad\qquad \times K_{D,\delta}\left(x,X_{t_{k+1}}^{m;i_{1},i_{2}}\right)\theta\left(X_{t_{l}}^{m;i_{1},i_{2}},t_{l}\right),
\end{align*}
and
\begin{align*}
\theta(x_{1},t_{k+1}) & =\sum_{\substack{i_{1},i_{2}\\ i_{2}>0}}A_{i_{1},i_{2}}\frac{1}{N}\sum_{m=1}^{N}\left(\varTheta_{D,\delta}\left(x,X_{t_{k+1}}^{m;i_{1},i_{2}}\right)-\varTheta_{D,\delta}\left(x,X_{t_{k+1}}^{m;i_{1},-i_{2}}\right)\right)\omega_{i_{1},i_{2}}\nonumber \\
 & +\sum_{\substack{i_{1},i_{2}\\ i_{2}>0}}A_{i_{1},i_{2}}\sum_{l=0}^{k}h\frac{1}{N}\sum_{m=1}^{N}1_{\left\{t_l>\gamma_{t_k}\left(X^{m;i_{1},i_{2}}\right)\right\}}\varTheta_{D,\delta}\left(x,X_{t_{k+1}}^{m;i_{1},i_{2}}\right)G\left(X_{t_{l}}^{m;i_{1},i_{2}},t_{l}\right)\nonumber \\
 & +\frac{\nu}{\varepsilon^{2}}\sum_{\substack{i_{1},i_{2}\\ i_{2}>0}}A_{i_{1},i_{2}}\sum_{l=0}^{k}h\frac{1}{N}\sum_{m=1}^{N}1_{\left\{t_l>\gamma_{t_k}\left(X^{m;i_{1},i_{2}}\right)\right\}}\chi\left(\frac{X_{t_{l}}^{m;i_{1},i_{2}}}{\varepsilon}\right) \nonumber \\ & \qquad\qquad \times\varTheta_{D,\delta}\left(x,X_{t_{k+1}}^{m;i_{1},i_{2}}\right)\theta\left(X_{t_{l}}^{m;i_{1},i_{2}},t_{l}\right),
\end{align*}
for $x=(x_{1},x_{2})$ with $x_{2}>0$, and 
\[
(u^{1}(x,t),u^{2}(x,t))=(u^{1}(\overline{x},t),-u^{2}(\bar{x},t))\quad\textrm{ when }x_{2}<0.
\]

\subsection{Numerical experiment}

Recall the Reynolds number is given by $\textrm{Re}=\frac{U_{0}L}{\nu}$
with the main stream velocity $U_{0}$ determined by the initial velocity
in our experiments, and $L$ is the typical size. In our experiment, we set 
\begin{equation}
\nu=0.01\label{viscosity-c1}
\end{equation}
for the viscosity constant, and the typical length scale 
\begin{equation}
L=6.\label{typ-s-01}
\end{equation}
We also set the initial main stream velocity 
\begin{equation}
U_{0}=1,\label{k-value}
\end{equation}
which altogether imply that  $\textrm{Re} = 600$ in our setup.

In the experiment, we choose the initial velocity to be $u_0(x_1, x_2) = U_0 1_{x_2}$, i.e. the uniform flow that is set to satisfy the no-slip condition at the wall. Some non-trivial motion is then incited by the external force term $G(x, t) \equiv G_0$ which we choose as $G_0 = -0.2$. The kernel regularisation parameter $\sigma = 0.01$, and the cutoff parameter $\varepsilon=0.05$.

As described above, we choose an outer lattice to be a rectangular lattice in the domain $\{-6 \leq x_1 \leq 6, 0 \leq x_2 \leq 6\}$. The mesh size for the lattice is $h_0=0.15$ and the number $N_0=40$. For the lattice in the boundary layer, we the mesh size has to satisfy 
\[
h_{2}\ll\delta=L\sqrt{\frac{1}{\textrm{Re}}}\sim 0.2449.
\]
In the simulation the chosen parameters are  $h_{1}=0.1$, $h_{2}=0.00125$ with numbers $N_{1}=60$, $N_{2}=80$. This choice gives $26,042$ points in total. 

We implement the numerical scheme 1 from Subsection \ref{NS1section}. The time mesh is taken as $h=0.01$ and the iteration is conducted for $300$ steps. The simulation results for the boundary flow are presented in Figure \ref{ExpFigBLayer}. In the figures, the streamlines are coloured by the magnitude of the velocity and the background colour gives the value of the vorticity. For the outer outer flow, we present the results in Figure \ref{ExpFigOLayer}. 

\begin{figure}
    \centering
    \subfloat[\centering $t=0.01$]{{\includegraphics[width=.45\linewidth]{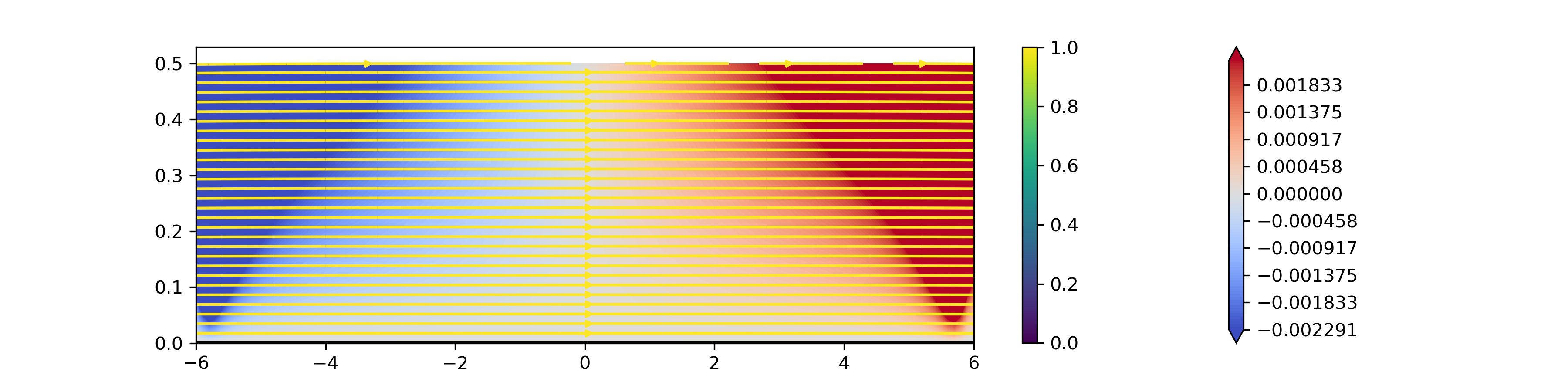} }}
    \subfloat[\centering $t=0.25$]{{\includegraphics[width=.45\linewidth]{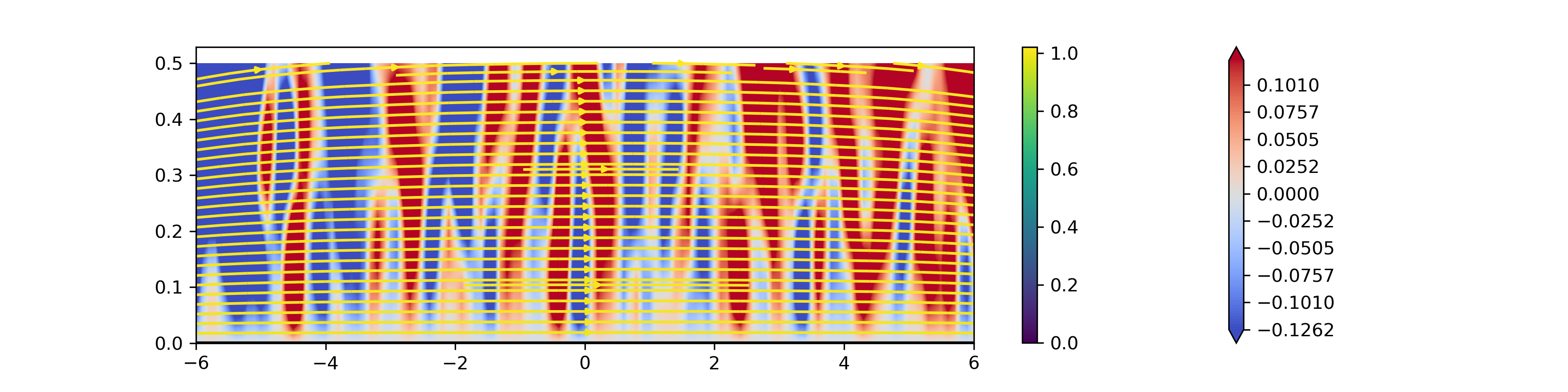}}}
    \qquad
    \subfloat[\centering $t=0.5$]{{\includegraphics[width=.45\linewidth]{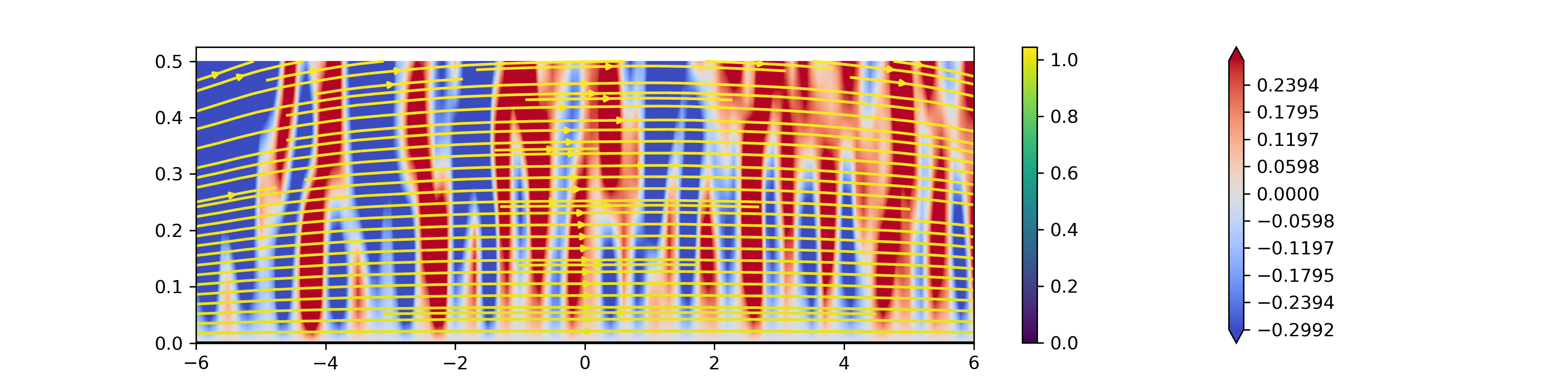} }}
    \subfloat[\centering $t=1.0$]{{\includegraphics[width=.45\linewidth]{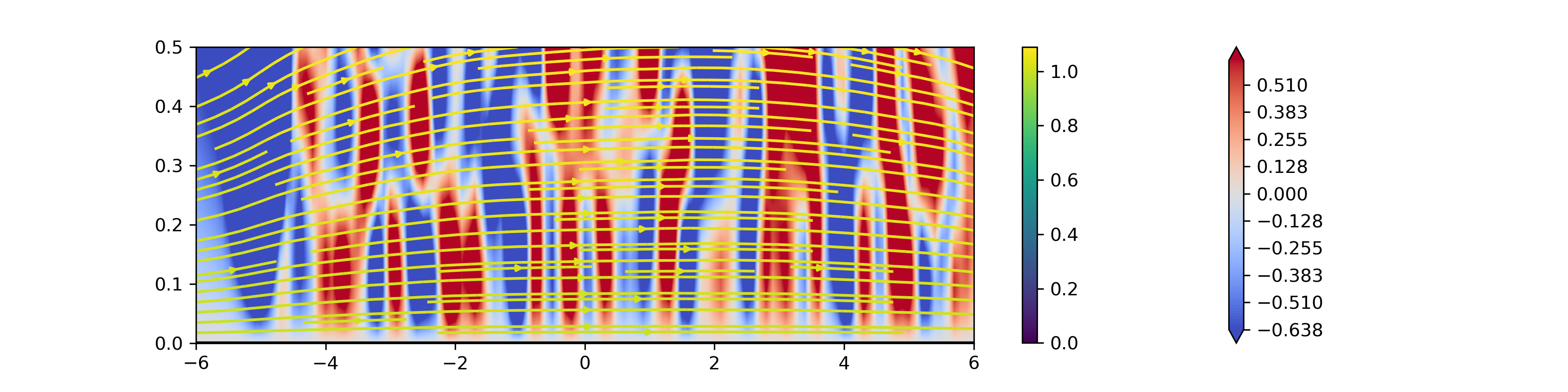}}}
    \qquad
    \subfloat[\centering $t=1.5$]{{\includegraphics[width=.45\linewidth]{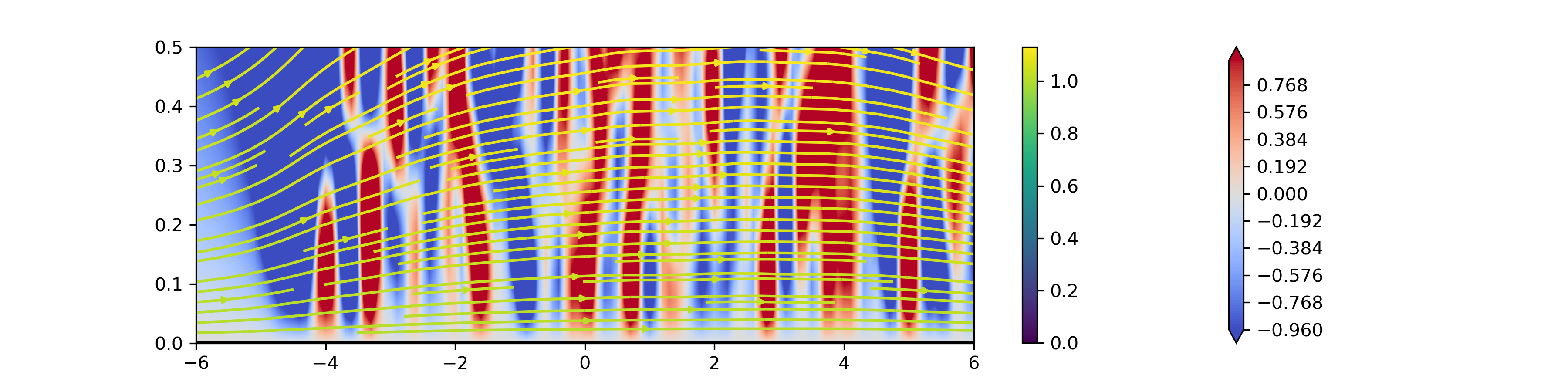} }}
    \subfloat[\centering $t=2.0$]{{\includegraphics[width=.45\linewidth]{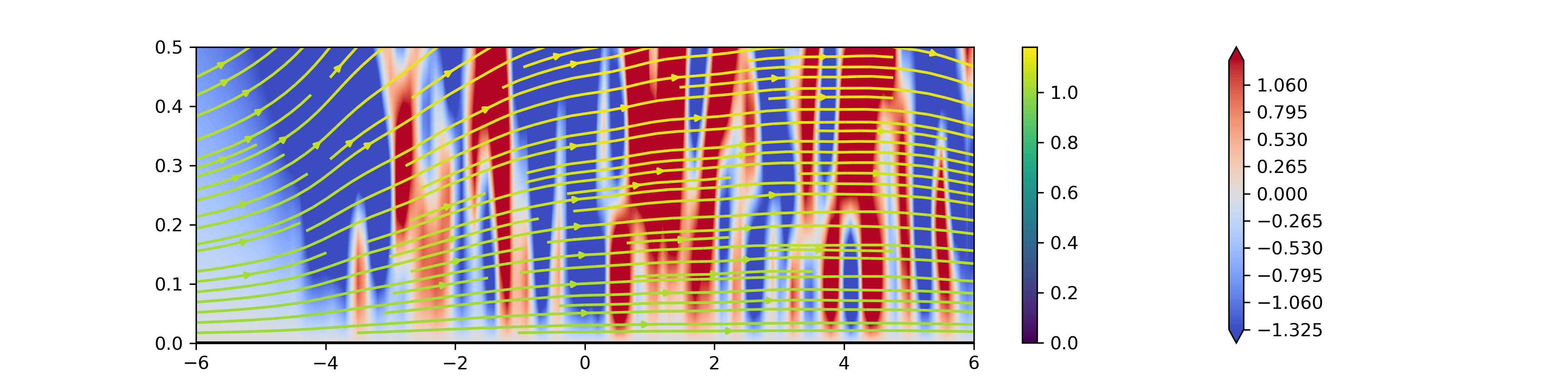}}}
    \qquad
    \subfloat[\centering $t=2.5$]{{\includegraphics[width=.45\linewidth]{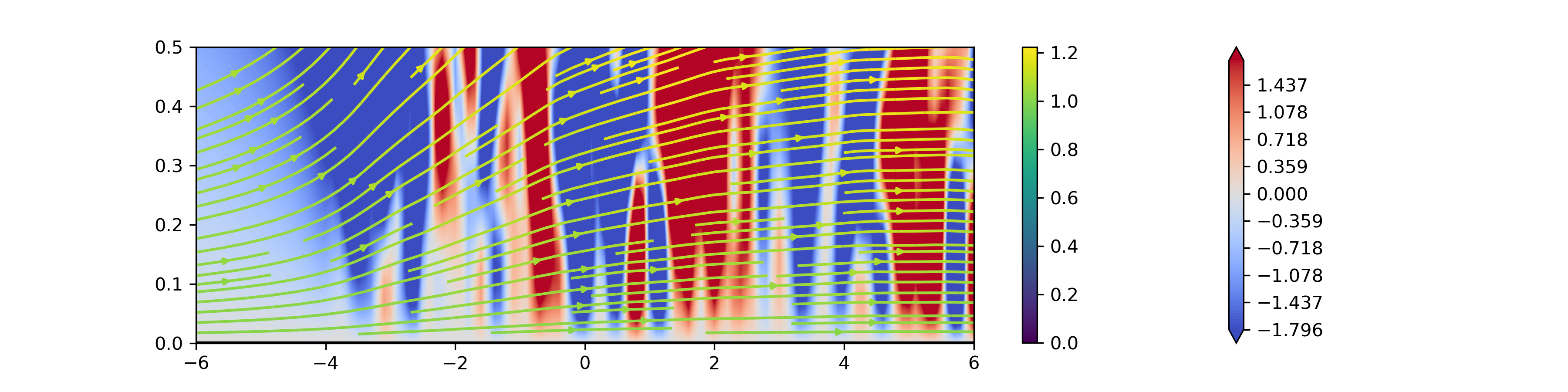} }}
    \subfloat[\centering $t=3.0$]{{\includegraphics[width=.45\linewidth]{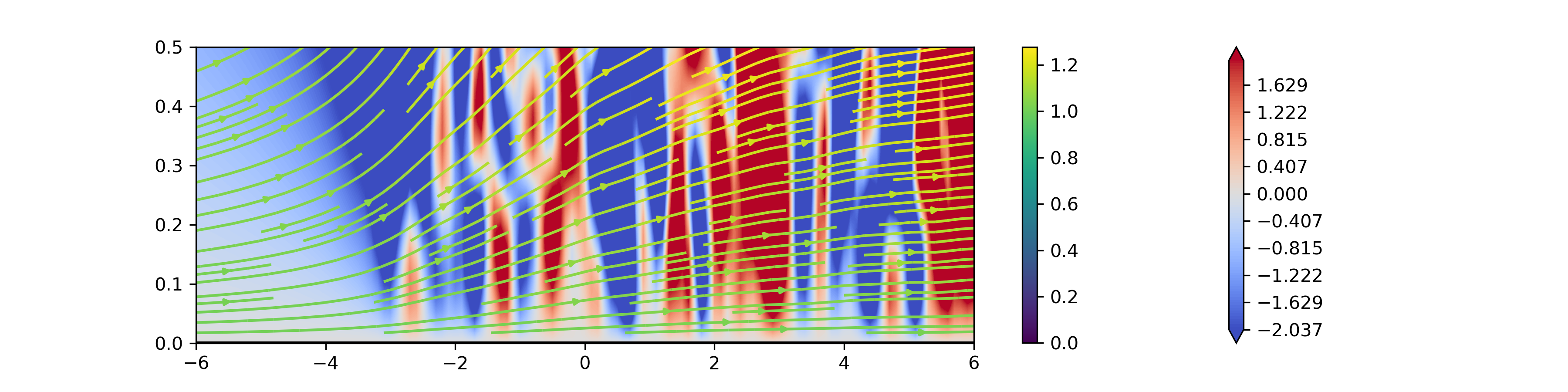}}}
    \caption{The boundary layer flow at different times $t$.}
    \label{ExpFigBLayer}
\end{figure}

\begin{figure}
    \centering
    \subfloat[\centering $t=0.01$]{{\includegraphics[width=.45\linewidth]{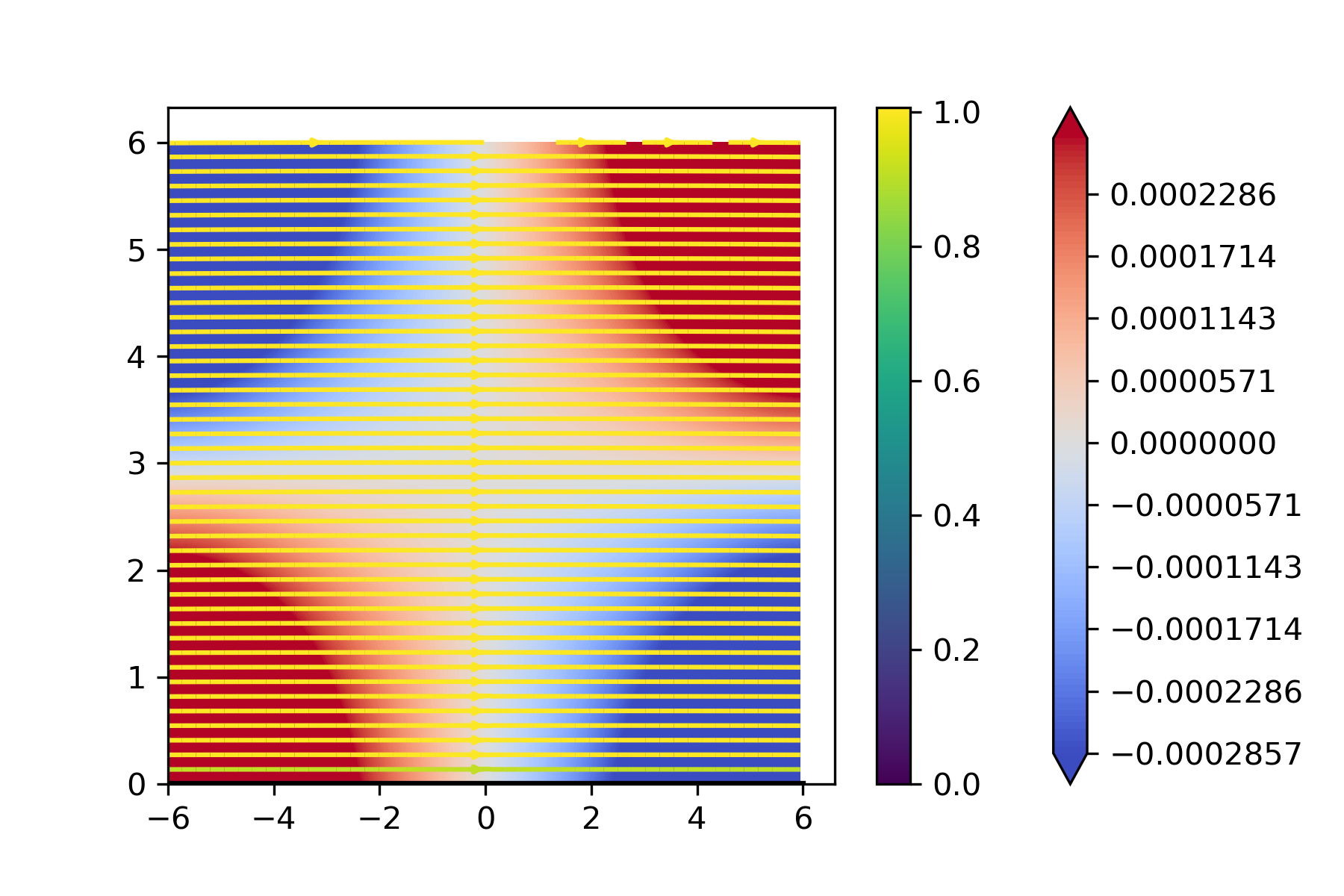} }}
    \subfloat[\centering $t=1.0$]{{\includegraphics[width=.45\linewidth]{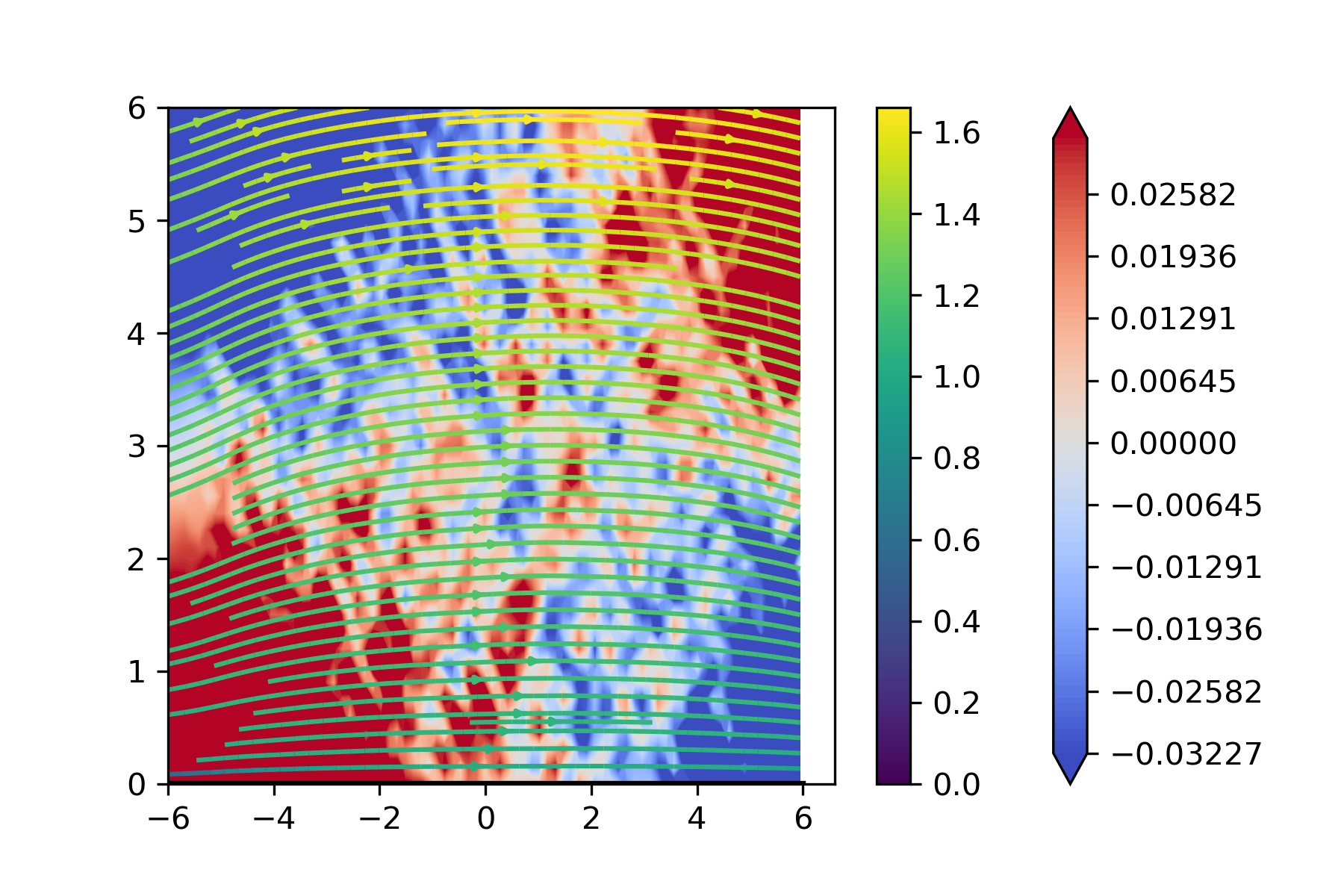}}}
    \qquad
    \subfloat[\centering $t=2.0$]{{\includegraphics[width=.45\linewidth]{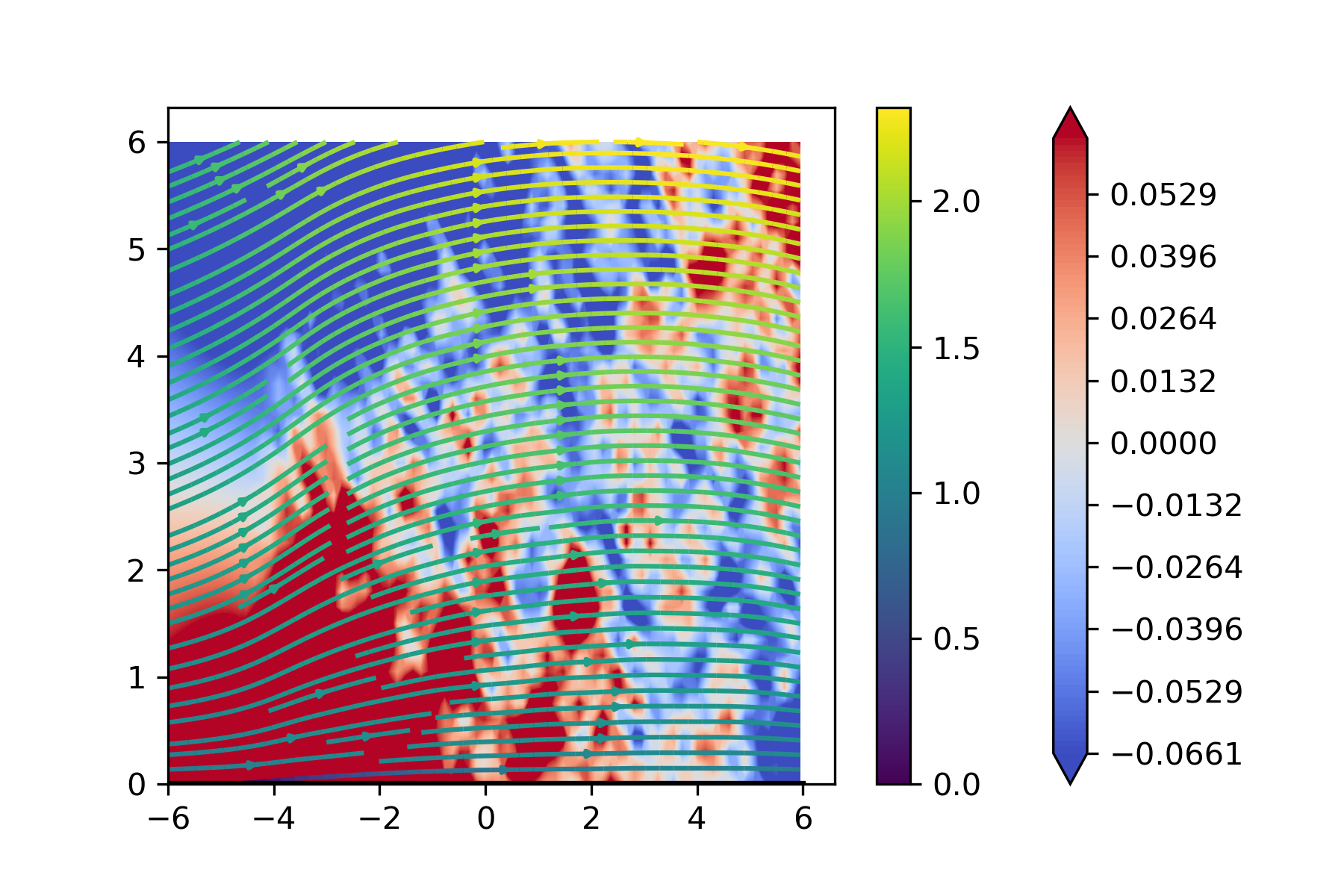} }}
    \subfloat[\centering $t=3.0$]{{\includegraphics[width=.45\linewidth]{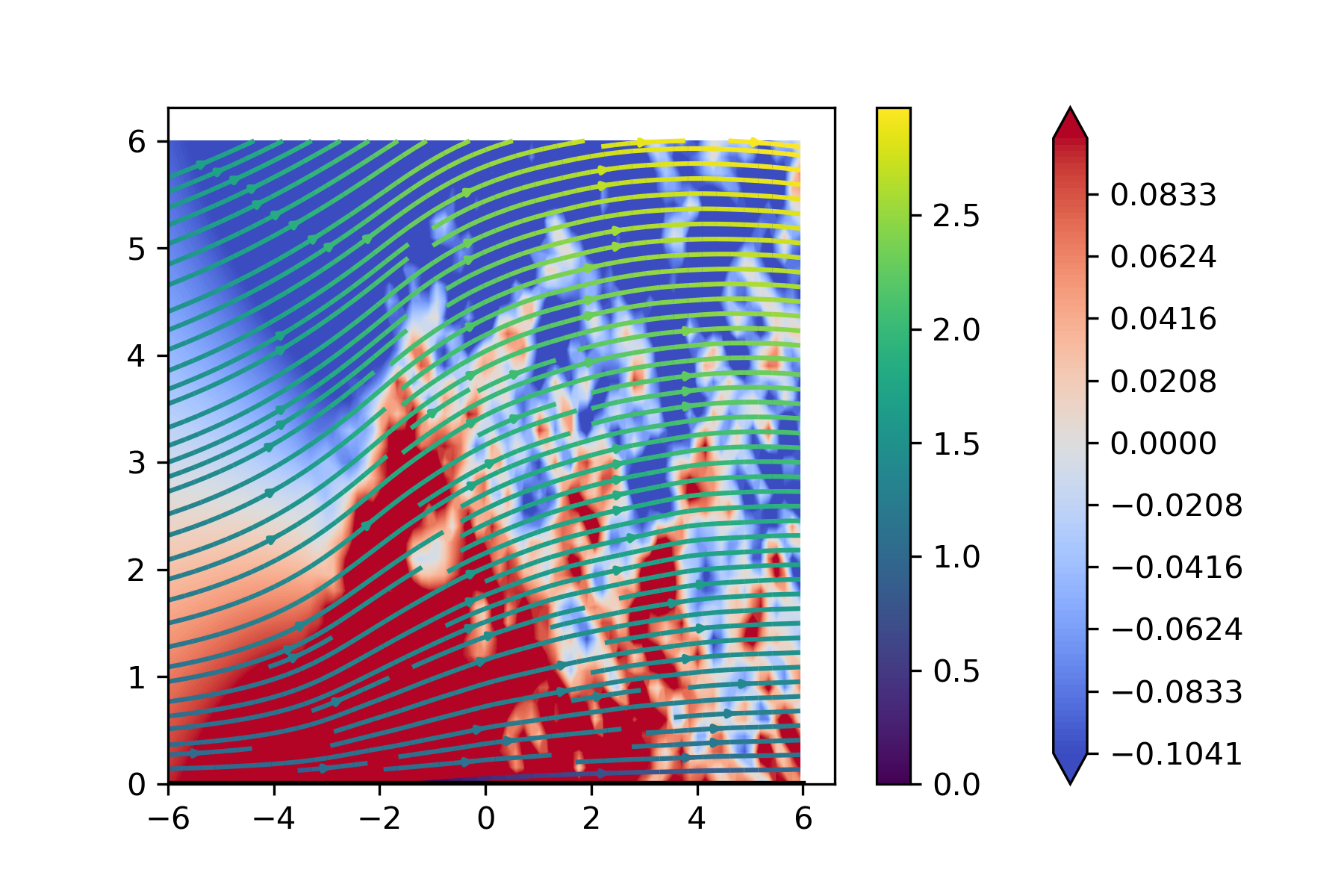}}}
    \caption{The outer layer flow at different times $t$.}
    \label{ExpFigOLayer}
\end{figure}

The outside boundary layer flow close to the rigid boundary can be
seen in Figure 2.

\newpage{}

\section*{Data Availability Statement}

The data that support the findings of this study are available from
the corresponding author upon reasonable request.

\section*{Acknowledgement}

The authors would like to thank Oxford Suzhou Centre for Advanced
Research for providing the excellent computing facility VC and ZQ are supported
(fully and partially, respectively) by the EPSRC Centre for Doctoral Training in Mathematics
of Random Systems: Analysis, Modelling and Simulation (EP/S023925/1).
 SWE is funded by Deutsche Forschungsgemeinschaft (DFG) - Project-ID 410208580 - IRTG2544 ('Stochastic Analysis in Interaction').

\end{document}